\theoremstyle{thmstyleone}%
\theoremstyle{thmstyletwo}%
\newtheorem{example}{Example}%
\theoremstyle{thmstylethree}%
\newtheorem{thm}{Theorem}%[section]
\newtheorem{cor}[thm]{Corollary}
\newtheorem{lem}[thm]{Lemma}
\newtheorem{prop}[thm]{Proposition}
\newtheorem{defin}[thm]{Definition}
\newtheorem{assume}[thm]{Assumption}
\newtheorem{fact}[thm]{Fact}
\newtheorem{rem}[thm]{Remark}
\DeclareMathOperator*{\dom}{dom}
\DeclareMathOperator*{\Fix}{Fix}
\newcommand{\RR}{ \mathbb{R} }
\newcommand{\NN}{\mathbb{N}}
\newcommand{\prox}{\mathrm{prox}} %Proximal mapping
\begin{document}
\title[Trilevel and Multilevel Optimization]{Trilevel and Multilevel Optimization using~Monotone~Operator~Theory}

%%=============================================================%%
%% Prefix	-> \pfx{Dr}
%% GivenName	-> \fnm{Joergen W.}
%% Particle	-> \spfx{van der} -> surname prefix
%% FamilyName	-> \sur{Ploeg}
%% Suffix	-> \sfx{IV}
%% NatureName	-> \tanm{Poet Laureate} -> Title after name
%% Degrees	-> \dgr{MSc, PhD}
%% \author*[1,2]{\pfx{Dr} \fnm{Joergen W.} \spfx{van der} \sur{Ploeg} \sfx{IV} \tanm{Poet Laureate} 
%%                 \dgr{MSc, PhD}}\email{iauthor@gmail.com}
%%=============================================================%%

\author{\fnm{Allahkaram} \sur{Shafiei}}\email{shafiall@fel.cvut.cz}

\author{\fnm{Vyacheslav} \sur{Kungurtsev}}\email{vyacheslav.kungurtsev@fel.cvut.cz}

\author{\fnm{Jakub} \sur{Marecek}}\email{jakub.marecek@fel.cvut.cz}

\affil{\orgdiv{Department of Computer Science}, \orgname{Czech Technical University}, \orgaddress{\street{Karlovo Namesti 13}, \city{Prague 2}, \postcode{121 35},  \country{Czech Republic}}}

%%==================================%%
%% sample for unstructured abstract %%
%%==================================%%

\abstract{We consider rather a general class of multi-level optimization problems,
where a convex objective function is to be minimized subject to constraints of optimality of nested convex optimization problems. 
As a special case, we consider a trilevel optimization problem, where the objective of the two lower layers
consists of a sum of a smooth and a non-smooth term.~Based on fixed-point theory and related arguments, we present a natural first-order algorithm and analyze its convergence and rates of convergence in several regimes of parameters.}

\keywords{Variational Inequality, Bi-level optimization, Tri-level optimization, Multi-level minimization, Non-expansive mappings}

%%\pacs[JEL Classification]{D8, H51}

%%\pacs[MSC Classification]{35A01, 65L10, 65L12, 65L20, 65L70}

\maketitle
\section{Introduction}\label{sec1}

 Hierarchical Optimization Problems, also known as Multilevel Optimization Problems (MOP), were first introduced by \cite{bracken1973mathematical} and \cite{candler1977multi} as a class of constrained optimization problems, wherein the feasible set is  determined -- implicitly -- as the optima of multiple optimization problems, nested in a predetermined sequence. %one must be solved across the set of solutions of the next.
In theory, MOP has applications in game theory, robust optimization, chance-constrained programming, and adversarial machine learning.
In practice, MOP models are widely used in security applications, where they model so-called interdiction problems. See 
\cite{iiduka2011iterative,lampariello2020explicit,moudafi2007krasnoselski,xu2010viscosity,yamada2001hybrid}
for several examples.  

%(See Problem \ref{prob:tri-level} below for a formal statement.) 
%The fundamental toward of this paper is present some existence result related to middle layers and tri-layers optimisation and also convergence analysis of the proposed algorithm. The structure of the bi-level variational inequality shortly (BVI) is thus a  two-level hierarchical problem such that the solution of the lower level problem specifies the feasible space of the upper-level problem. 

\textcolor{black}{
Tri-level programming problems are challenging,  even when one considers continuous linear problems \cite{blair1992computational} due to their computational complexity and the interactions between decision-makers based on the numbers of variables at different levels. Moreover, at each level, we have limited or incomplete information about the decisions made at the other levels. This interdependency makes it difficult to decouple the optimization problem into separate sub-problems. This, in turn, requires techniques that can handle the hierarchical nature of the problem. \\
The article is structured as follows. Section 1 introduces the necessary notations, assumptions, and the problem model. Additionally, it establishes fundamental notation and background information pertaining to the proximal-gradient algorithm. Section 2 delves into  convergence analysis for a variety of assumptions on the step sizes. Furthermore, in this section, we present the methodological approach employed in our study regarding error bounds that enables us to provide the convergence of sequence generated by the algorithm for tri-level problems. In Section 3, the paper introduces the convergence rate analysis for variants of the proximal-gradient algorithm. It asserts that our convergence rate in the middle layer, which is $\mathcal{O}(\frac{1}{(k+1)\sqrt{k}})$, improves upon the rate of  \cite{Sabach2017}. Finally, in Section 4, we formalize multi-level optimization, followed by an exploration of both convergence and the corresponding convergence rate. 
}

\subsection{The Problem}

In particular, our goal is to formulate and analyze an optimization algorithm for a class of hierarchically-defined problems: 
   \begin{equation}\label{prob:multi-level}
\left\{ \begin{array}{l}
 \mathop {\min }\limits_{x \in X_N^*} \omega(x) \\ 
 X_{i}^* = \mathop {\arg \min }\limits_{x\in X_{i-1}^*} [f_i(x) + g_i(x)],\,\,\,\,\,\,i \in \{1,...,N\}  \\ 
 X_0^* = \mathbb{R}^n, \\ 
 \end{array} \right.\end{equation}
where the middle layers (for $i \in \{1,..., N\}$) exhibit the so-called composite structure, where $\omega$ is a strongly convex differentiable function and there are smooth terms $f_i$ and non-smooth terms $g_i$. 
In machine-learning applications, the smooth functions are chosen to be loss functions and the  
non-smooth functions $g_i$ are regularizers.

We begin by considering a hierarchical optimization with three layers, wherein the middle and lower layers exhibit 
the composite structure: % tri-level % triple-hierarchical 
%constrained convex optimization problem: 
 \textcolor{black}{ \begin{equation}\label{prob:tri-levelintro}
\left\{ \begin{array}{l}
 \mathop {\min }\limits_{x \in {X^*}} \ \omega (x) \\ 
  {X^*} = \mathop {\arg \min }\limits_{x \in {Y^*}} [\phi_2(x):={f_2}(x) + {g_2}(x)] \\ 
 {Y^*} = \mathop {\arg \min }\limits_{x \in \mathbb{R}^n } [\phi_1(x):={f_1}(x)+g_1(x) ]. \\ 
\end{array} \right.
\end{equation}
By leveraging
fixed-point theory and related reasoning, we propose a straightforward first-order algorithm and analyze its convergence and convergence rates across various parameter regimes. The algorithm exhibits the following non-asymptotic behaviour:
The first layer exhibits a convergence rate of $\mathcal{O}(\frac{1}{k})$, the second layer (middle layer) exhibits a convergence rate $\mathcal{O}(\frac{1}{(k+1)\sqrt{k}})$, and finally, the third layer exhibits $\mathcal{O}(\frac{1}{\sqrt{k}})$ global
rate of convergence concerning the inner objective function values. By accessing the main iteration in terms of the inner objective function values, we observe the convergence rate $\mathcal{O}(\frac{1}{k})$.}\\

%We propose an iterative algorithm for the trilevel problem \eqref{prob:tri-levelintro}. %(Algorithm \ref{alphabetaoptimal})
%We shall see that the algorithm and its analysis can easily be extended to arbitrary $N$-level MOP.

 \subsection{Related Work}

Our work is inspired by a long history of work on bilevel optimization problems (see, ~e.g,~\cite{al1992global,dempe2007new,dempe2014necessary,zhang1994problems}).
Our work extends proximal-gradient optimization algorithms \cite{Sabach2017} for a related bilevel optimization problem and is informed by \cite{moudafi2007krasnoselski}.

Notably, \cite{solodov2007explicit} gave an explicit descent method for bi-level optimization in the form of
\begin{equation}\label{r2}
\left\{ \begin{array}{l}
 \min \,\omega(x) \\ 
 x \in S := \mathop {\arg \min }\limits_{x \in F} f(x) = \mathop {\arg \min }\limits_{x \in {\mathbb{R}^n}} [f(x) + {i_F}], \\ 
 \end{array} \right.
\end{equation}
in which $i_F$ is the indicator function on $F$ and $f$ is convex and smooth function and $\omega$ is strongly convex. Subsequently, \cite{Sabach2017} proposed the so-called BIG-SAM method for solving the more general problem of,
\begin{equation}\label{eq:bilevelproblem}
\left\{ \begin{array}{l}
 \min \,\omega(x) \\ 
 x \in {Y^*} := \mathop {\arg \min }\limits_{x \in {\mathbb{R}^n}} [f(x) + g(x)], \\ 
 \end{array} \right.
\end{equation}
 in which $f$ is smooth and $g$ is convex and lower semi-continuous and possibly non-smooth. 
We consider a similar structure in a multi-level problem.
% Such problems appear often in machine learning, where $f$ is known a a loss function and $g$ is known as a regularizer. 
 % adversarial machine learning

 \textcolor{black}{There are only a few solution approaches presented in
the literature for tri-level problems, addressing very restricted classes 
of problems, and mostly without guarantees of global optimality. For example, error-bound conditions were used by 
Senter and Dotson \cite{senter1974approximating} to assure the existence of strong convergence results for Mann iterates. 
Typically, error bounds are essential for assessing the accuracy and reliability of numerical approximations or algorithms and, providing a measure of how close the approximate solution is to the true solution, given certain assumptions or conditions. These conditions may include properties of the problem, the algorithm used, the precision of numerical calculations, and any assumptions made during the approximation process.
Very recently, Sato et al.\ \cite{Sato2021} presented a gradient-based algorithm for multilevel optimization, 
where the lower-level problems are replaced by steepest descent update equations. 
They present conditions when this reformulation asymptotically converges to the original multilevel problem. Based on our knowledge, no other solution approach can tackle the class of problems considered in this work.}

\textcolor{black}{
\subsection{Examples}
We present several concrete examples of trilevel optimization problems.
\begin{enumerate}
    \item Pursuit-evasion-intercept, or alternatively described as pursuit-evade-defend (see, e.g.~\cite{fisac2015pursuit}) is a (sequential) game wherein one player is seeking to follow and capture another in a dynamic setting, with a third tasked with intercepting the pursuer or 
    \item Bilevel optimization with robust uncertainty. Robust optimization, i.e., choosing the optimal outcome upon the worst case realization of a parameter. This can be expressed as a nested optimization problem, wherein the inner problem is a maximum over the parameter set~\cite{bental2009robust}. Any classic bilevel optimization, for instance Stackelberg games, can become trilevel when the leader makes a decision under robust uncertainty consideration.
    \item Mixture models with training and validation: consider some convex loss function on data with a regularization (e.g., LASSO), wherein the validation (for instance, a coreset) data set is considered more significant and thus an inner problem, the training set presents the middle problem, and the tuning of mixture weights of different models is the uppermost layer.
\end{enumerate}
}

%We shall consider the general problem of convex multi-level optimisation and derive a generic class of algorithms for an arbitrary number of levels. 
\subsection{Preliminaries} 
Let $\Omega\subseteq\mathbb{R}^n$ be closed and convex and let $T$ be a mapping from $\mathbb{R}^n$ into itself. Recall that the notion of the variational inequality (VI), denoted by $VI(T,\Omega)$, is to find a vector $x^*\in\Omega$ such that 
\begin{equation}\label{eq:VI}
    VI(T,\Omega )\,\,\,\,\,\,\,\,\,\,\,\,\,\left\langle {{x^*} - T({x^*}),x - {x^*}} \right\rangle  \ge 0\,\,\,\,\,\,\,\,\,\,\forall x \in \Omega .
\end{equation}
Note that \eqref{eq:VI} is equivalent to finding the fixed point of the problem
\[\text{Find}\,\,{x^*} \in \Omega \,\,\text{such that}\,\,\,{x^*} = \textcolor{black}{{P_\Omega }T({x^*})},\]
where $P_{\Omega}$ is the metric projection of $\mathbb{R}^n$ onto $\Omega$, i.e., it maps $x\in\mathbb{R}^n$ to the unique point in $\Omega$ defined as, where throughout the paper we use the Euclidean norm,
\[{P_\Omega }(x) := \mathop {\arg \min }\limits_{y \in \Omega } \|x - y\|~~~\forall x\in\mathbb{R}^n,\]
satisfying
\[\|x - {P_\Omega }(x)\| = \inf \left\{ {\|x - y\|:{\mkern 1mu} {\mkern 1mu} {\mkern 1mu} y \in \Omega } \right\}: = d(x,\Omega ),\,\,\,\,\,\,\,{P_\Omega }(x) \in \Omega .\]
We also use the notation $\Fix(T)=\{x\in\mathbb{R}^n:~ T(x)=x\}$ for the set of fixed points of $T$. 
This set $\Fix(T)$ is closed and convex for non-expansive mappings $T$. 

We now review some well-known facts about non-expansive mappings that we shall henceforth use in the paper without reference.
\begin{itemize}
    \item Let $T:\RR^n\to\mathbb{R}^n$ be a non-expansive mapping. Then $I-T$ is monotone; that is
    \[\left\langle {x - y,(I - T)x - (I - T)y} \right\rangle  \ge 0.\]
    
    \item Let $S:\mathbb{R}^n\to\mathbb{R}^n$ be a contraction mapping with coefficient $r\in (0,1)$. Then $I-S$ called $(1-r)$-strongly monotone; that is 
    
    \[\left\langle {x - y,(I - S)x - (I - S)y} \right\rangle  \ge (1 - r)\|x - y\|^2,\,\,\,\,\,\,\,\,\,\,\forall x,y \in\mathbb{R}^n.\]
    \item Let $x\in\mathbb{R}^n$ and $z\in\Omega$ be given. Then $z=P_{\Omega}(x)$ if and only if the following inequality holds \[\left\langle {x - z,z - y} \right\rangle  \ge 0\,\,\,\,\,\,\,\forall y \in \Omega,\]
    and also if and only if \[\|x - z\|^2 + \|y - z\|^2 \le \|x - y\|^2,\,\,\,\,\,\,\,\,\,\forall y \in \Omega .\]
    \item For all $x,y\in\mathbb{R}^n$ one has \[\|{P_\Omega }(x) - {P_\Omega }(y)\|^2 \le \left\langle {{P_\Omega }(x) - {P_\Omega }(y),x - y} \right\rangle. \]
    
\end{itemize}

\section{Convergence Analysis for Trilevel Optimization Problems}

Let us consider the trilevel problem \eqref{prob:tri-levelintro}. After presenting our assumptions and preliminaries, we analyze the convergence of a  
proximal-gradient algorithm first under a variety of conditions on the step sizes (Section \ref{sec:asymptotic2}, using lemmas from Section \ref{sec:asymptotic1}).
Alternatively, one can assume a certain error-bound condition (Section \ref{sec:errorbound}).
%In Section \ref{sec:existence}, we comment on the conditions for the existence of a solution.

%\subsection{A Generalization Replacing Assumptions on $\alpha_k$ and $\beta_k$ with Fej\'er Monotonicity}
%\label{sec:fejer}

\subsection{Assumptions and the Algorithm}

 We shall make the following standing assumptions: % on the functions under consideration which are given as
 \begin{assume}\label{s:assumptiontri}
 (i) $f_i:\mathbb{R}^n\to\mathbb{R}$ are convex and continuously differentiable with $L_{f_i}$-Lipschitz gradient, that is, 
 \[\|\nabla {f_i}(x) - \nabla {f_i}(y)\| \le {L_{{f_i}}}\|x - y\|,\,\,\,\,\forall x,y \in\mathbb{R}^n,\,\,\,\,i = 1,2.\]
 (ii) $g_i:\mathbb{R}^n\to (-\infty,+\infty]$ is proper, lower semi-continuous and convex.\\  
 (iii) the optimal solution set of the inner layers is non-empty, i.e., $X^*\ne\emptyset$ and $Y^*\ne\emptyset$.\\
 (iv) $\omega:\mathbb{R}^n\to\mathbb{R}$ is strongly convex with strong convexity parameter $\mu$.\\
% \textcolor{red}{This does not seem standard, I recommend changing the strong convexity constant to $\mu$}
 (v) $\omega$ is a continuously differentiable function so that $\nabla\omega(\cdot)$ is Lipschitz continuous with constant $L_{\omega}$.
 \end{assume}
 
 \begin{table}[t]
    \centering
    \begin{tabular}{c|c|c|c|c}
         Layer & Function & Operator & Solution Set & Optimality Condition \\ \hline
         Top & $\textcolor{black}{\omega(x)}$  & $S$ &  & $\langle x^*-S(x^*),x-x^*\rangle \ge 0,\,\forall x \in X^*$ \\ \hline
         Middle & $f_2(x)+g_2(x)$ & $T$ & $X^*\subseteq Y^*$ & $\langle x^*-T(x^*),x-x^*\rangle \ge 0,\,\forall x\in\Fix(W)=Y^*$
         \\ \hline
         Bottom & $f_1(x)+g_1(x)$ & $W$ & $Y^*$ & $x^*\in\Fix(W)$
    \end{tabular}
    \caption{The three layers of a trilevel problem and the corresponding operators. }
    \label{tab:3layers}
\end{table}

Consider three operators corresponding to the three layers of objective functions,\textcolor{black}{
\begin{equation}\label{eq:maps}
\begin{array}{l}
S(x):=S_u(x) = x-u\nabla \omega(x), \\
T(x):=T_t(x) = \prox_{t g_2} (x-t\nabla f_2(x)),\\
W(x):=W_s(x) = \prox_{s g_1} (x-s\nabla f_1(x)).\\
\end{array}
\end{equation}}
Note that each of these corresponds to a fixed point map of their respective problems. It can be easily seen that $X^*\cap\Fix(T)=\Fix(T)\cap\Fix(W)$, however, in general, we are only interested in a (specific subset) of $\Fix(W)=Y^*$ and we expect $\Fix(T)\cap\Fix(W)$ to be empty.
 
It is well known that mappings $T$ and $W$ are non-expansive and $S$ is an $r$-contraction, i.e., for any  $u\in (0,\frac{2}{{{L_\omega } + \mu }}]$, one has
\begin{align}
\label{rcontraction}
    \|S(x) - S(y)\| \le \sqrt {1 - \frac{{2u\mu {L_\omega }}}{{\mu  + {L_\omega }}}} \|x - y\|,
\end{align}
(For more details, see \cite[Theorem 2.1.12, p.66]{nesterov2003introductory}).

For any proper, lower semi-continuous and convex function $g:\RR^n\to(-\infty,+\infty]$ the Moreau proximal mapping is defined by
 \begin{equation}\label{eq:prox}
     \prox_g(x) = \mathop {\arg \min }\limits_{u\in\mathbb{R}^n} \left\{ {g(u) + \frac{1}{2}\|u - x\|^2} \right\}.
 \end{equation}
% We refer to Appendix. for a brief overview. 
% As a fixed-point algorithm the p

 In general, a proximal-gradient algorithm \cite{beck2014first} is based on an iterated mapping:
 \[{T_t}(x) = \prox{_{tg}}(x - t\nabla f(x)),\]
 which has the following properties:
 
 (i) $T_t$ is non-expansive for sufficiently small $t$, i.e.,
\begin{equation}\label{eq:nonexpproxg}
  \|{T_t}(x) - {T_t}(y)\| \le \|x - y\|\,\,\,\,\,\,\,\forall x,y \in\RR^n,\,\,\,\forall t \in \left(0,\frac{1}{{{L_f}}}\right].
\end{equation}
 (ii) Its fixed points are equivalent to the set of minimizers to the corresponding minimization problem, i.e.,
 \begin{equation}\label{eq:fixproxg}
  \Fix(T_t)=\mathop {\arg \min }\limits_{x \in\mathbb{R}^n} [f(x) + g(x)]~~~\forall t>0. 
 \end{equation}
 We shall denote the proximal gradient mapping as $T$ in the Algorithm, instead of $T_t$, because of property (ii).
 
 A solution $x^*$ of~\eqref{prob:tri-levelintro} satisfies the following inequalities,
\begin{equation}\label{eq:optcond}
\begin{array}{l}
\text{Find }x^*\in Y^*=\Fix(W)\\
\langle x^*-T(x^*),x-x^*\rangle \ge 0,\,\forall x\in\Fix(W)=Y^*\\
\langle x^*-S(x^*),x-x^*\rangle \ge 0,\,\forall x \in X^*.
\end{array}
\end{equation}
Note that $x\in X^*$ is equivalent to $\langle x^*-T(x^*),x-x^*\rangle \ge 0,\,\forall x\in\Fix(W)=Y^*$ so the third condition can be modified to: $\langle x^*-S(x^*),x-x^*\rangle \ge 0,$ for all $x$ satisfying this relation.
 
 For reference regarding problems ~\eqref{prob:tri-levelintro} and ~\eqref{eq:bilevelproblem}, we present\,Table ~\ref{tab:3layers}.
 
Throughout this section, we are concerned with Algorithm \ref{algorithm:trilevel},
 which is based on the proximal-gradient maps and their following combination:
\begin{equation}\label{eq:trilevelalg}
x^{k+1} = \alpha_k S(x^k)+(1-\alpha_k) \beta_k T(x^k)+(1-\alpha_k)(1-\beta_k) W(x^k).
\end{equation}

\textcolor{black}{Define the following quantities regarding the relative limit behaviors of the two parameters:
\begin{equation}\label{eq:defdel}
\delta:=\mathop {\lim \sup }\limits_{k\to\infty} \frac{{{\beta _k}}}{{{\alpha _k}}}\in[0,\infty] \text{ and }\widetilde\delta:=\mathop {\lim }\limits_{k\to\infty} \frac{{{\beta _k}}}{{{\alpha _k}}}\in [0,\infty].
\end{equation}
These quantities play a central role in analyzing the convergence of the proposed algorithms. For more details, you may see Example \ref{exam:delta}.}

The following key Assumptions will be needed throughout the paper: 
\begin{assume} \label{as:assumealpha}
$\alpha_k\to 0$ (as $k\to\infty$) and $\sum\limits_{k= 1}^{ + \infty } {{\alpha _k}} =\infty.$
\label{ass:ii}
\end{assume}

\begin{assume}\label{as:assumeabk}
There exists $K>0$ such that
$\limsup_{k\to\infty}\frac{1}{\alpha_k} \lvert \frac{1}{\beta_k}-\frac{1}{\beta_{k-1}} \rvert \le K.$
\label{ass:iv}
\end{assume}

\begin{assume}\label{as:assumelimpsup}
$\mathop {\lim \sup }\limits_{k\to\infty}\frac{{\lvert{\beta _k} - {\beta _{k - 1}}\rvert + \lvert{\alpha _k} - {\alpha _{k - 1}}\rvert}}{{{\alpha _k}{\beta _k}}} = 0.$  
\label{ass:v}
\end{assume}
\begin{algorithm}[t!]
    \caption{Proximal Gradient for Tri-level Optimization} \label{algorithm:trilevel}
 \textbf{Input:}
$t\in \left(0,\frac{1}{{{L_{{f_1}}}}}\right], s\in \left(0,\frac{1}{{{L_{f_2}}}}\right],\,r \in \,\left(0,\frac{2}{{{L_\omega } + \mu }}\right]$ and the real sequences $\alpha_k$ and $\beta_k$ satisfy the \textcolor{black}{assumptions} \\
          \textbf{Initialization:} Select an arbitrary starting point $x^0\in\mathbb{R}^n$\\
%           \textbf{Step1.} Given $x^k\in\mathbb{R}^n$, compute $x^{k+1}\in\mathbb{R}^n$ as follow\\
           \textbf{For} $k=1,2,...$ do 
              \[\begin{array}{l}
 {y^k}: = \prox_{t{g_2}}\left[{x^{k - 1}} - t\nabla {f_2}({x^{k - 1}})\right]  \\ 
 {z^k}: = \prox_{s{g_1}}\left[{x^{k - 1}} - s\nabla {f_1}({x^{k - 1}})\right] \\ 
 {v ^k}: = {x^{k - 1}} - r\textcolor{black}{\nabla\omega} ({x^{k - 1}})  \\ 
 {x^k} = {\alpha _{k}}v^{k} + (1 - {\alpha _{k}}){\beta _{k}}y^{k} + (1 - {\beta _{k}})(1 - {\alpha _{k}})z^{k} \\ 
 \end{array}\]
 %\textbf{Step3.} Set $k:= k+1$ and go to \bf{Step1}.
 \textbf{End For}
    \end{algorithm}
\subsection{Properties of the limit points}
 \label{sec:asymptotic1}
 
We now derive a set of results regarding the properties of limit points generated
by the sequence~\eqref{eq:trilevelalg}. 
First, we present the following powerful lemma that we shall use in the analysis below:
\begin{lem}\cite[Lemma 2.1]{xu2002iterative}
\label{lemma:L1}
Assume that $a_k$ be a sequence of non-negative real numbers such that \[{a_{k + 1}} \le (1 - {\gamma _k}){a_k} + {\delta _k},\] where $\gamma_k$ is a sequences in $(0,1)$ and $\delta_k$ is a sequence in $\mathbb{R}$, such that\\
(1) $\sum\limits_{k = 1}^\infty  {{\gamma _k} = } \infty$,\\
(2) either $\limsup\limits_{k\to\infty}\frac{\delta_k}{\gamma_k}\le 0$ or $\sum\limits_{k= 1}^\infty  {\lvert{\delta _k}\rvert < } \infty$. \\
Then $\mathop {\lim }\limits_{k\to \infty } {a_k}=0.$
\end{lem}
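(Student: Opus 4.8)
The plan is to reduce everything to showing $\limsup_{k\to\infty} a_k \le 0$; since every $a_k\ge 0$ this immediately forces $a_k\to 0$. The whole argument rests on one elementary device: iterating the recurrence from a base index $m$ yields
\[
a_{k+1} \le \Big(\prod_{j=m}^{k}(1-\gamma_j)\Big)a_m + \sum_{j=m}^{k}\Big(\prod_{i=j+1}^{k}(1-\gamma_i)\Big)\delta_j,
\]
with the convention that the empty product equals $1$. Using $1-\gamma_j\le e^{-\gamma_j}$ together with hypothesis (1), the leading product obeys $\prod_{j=m}^{k}(1-\gamma_j)\le e^{-\sum_{j=m}^{k}\gamma_j}\to 0$ as $k\to\infty$ for each fixed $m$. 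From here I would split on the two alternatives in hypothesis (2).

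If $\sum_k|\delta_k|<\infty$, I bound the residual sum crudely: each inner factor $\prod_{i=j+1}^{k}(1-\gamma_i)$ lies in $(0,1]$, so the whole sum is at most $\sum_{j=m}^{\infty}|\delta_j|$. Given $\varepsilon>0$, I first pick $m$ so that this tail is below $\varepsilon/2$, and then, with $m$ fixed and $a_m$ a fixed finite constant, choose $k$ large enough that the leading term is below $\varepsilon/2$. This gives $\limsup_k a_k\le\varepsilon$, and letting $\varepsilon\downarrow 0$ closes this case.

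The delicate case is $\limsup_k \delta_k/\gamma_k\le 0$, and this is where I expect the real work. Fix $\varepsilon>0$; then there is an $N$ with $\delta_k\le\varepsilon\gamma_k$ for all $k\ge N$, so the recurrence becomes $a_{k+1}\le(1-\gamma_k)a_k+\varepsilon\gamma_k$ for $k\ge N$. The natural substitution $b_k:=a_k-\varepsilon$ collapses this to $b_{k+1}\le(1-\gamma_k)b_k$, but here is the obstacle: $b_k$ may be negative and may change sign, so one cannot simply iterate and invoke the vanishing product. The fix I would use is to pass to the truncation $c_k:=\max(b_k,0)$; a short case check on the sign of $b_k$, using $0<1-\gamma_k<1$, shows $c_{k+1}\le(1-\gamma_k)c_k$ for all $k\ge N$, and now iteration is legitimate since $c_k\ge 0$. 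Hence $c_k\le\big(\prod_{j=N}^{k-1}(1-\gamma_j)\big)c_N\to 0$, which gives $\limsup_k a_k\le\varepsilon$; letting $\varepsilon\downarrow 0$ completes the proof.

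The main obstacle is thus isolated in the last case: the sign indefiniteness of $\delta_k$ (and hence of $b_k$) blocks a direct telescoping, and the positive-part truncation is the step that restores monotone control so that the product bound $\prod(1-\gamma_j)\to 0$ can be applied.
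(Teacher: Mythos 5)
Your proof is correct. Note that the paper itself gives no proof of this lemma --- it is quoted from Xu (2002) --- and your argument is essentially the classical one from that reference: iterate the recurrence, kill the leading term via $\prod(1-\gamma_j)\le e^{-\sum\gamma_j}\to 0$, and handle the two alternatives in (2) separately. One small remark: the ``obstacle'' you isolate in the second case is illusory, since the inequality $b_{k+1}\le(1-\gamma_k)b_k$ can be iterated directly regardless of the sign of $b_k$ (multiplying both sides of an inequality by the positive number $1-\gamma_{k+1}$ preserves it), giving $b_{k+1}\le\bigl(\prod_{j=N}^{k}(1-\gamma_j)\bigr)b_N\to 0$ for the fixed constant $b_N$; your positive-part truncation is therefore an unnecessary, though perfectly valid, detour.
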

From now on and throughout the paper, we denote by $\{x^k\}$ the sequence generated by the algorithm \eqref{eq:trilevelalg}. The convergence of the algorithm crucially depends on the starting points $x^0\in\mathbb{R}^n$ and the parameters (step-sizes) $\alpha_k$ and $\beta_k$, which are chosen in advance.
Three different cases can be distinguished: $\delta=0, \delta>0$ and $\delta=\infty$, each associated with some other Assumptions. Initially, we are going to seek the conditions ensuring boundedness of the sequence of iterates $\{x^k\}$. Our proof techniques are similar to those that \cite{Sabach2017} used to prove their Lemma 2.

Throughout this paper, to simplify the notation, we will use $w(\{x^k\})$ to denote the set of cluster points of sequence $\{x^k\}$, i.e., 

\[w(\{x^k\}) = \left\{ {x \in\mathbb{R}^n:\,\,\,{x^{{k_i}}} \to x \textrm{ for some sub-sequence } \{ {x^{{k_i}}}\} \textrm{ of } \{ {x^k}\} } \right\},\]
and also for every $k\ge 1$, we define 
\[{Q_k}(x) = {\beta _k}T(x) + (1 - {\beta _k})W(x).\] 
It is straightforward to see that $Q_k$ is non-expansive. 

\begin{lem}\label{lem:boundedness}
Assume $\delta<\infty$ . Then $\{x^k\}$ is bounded, i.e., for every $x\in \Fix(W)$ there exists a constant $C_x$ such that $\|x^k-x\|\le C_x$ and constants $C_S$ and $C_T$ such that \[\|W({x^k}) - x\| \le {C_x},\,\,\,\|S({x^k}) - x\| \le {C_S+C_x},\,\,\,\,\,\|T({x^k}) - x\| \le {C_T+C_x}.\]
Moreover, for all $x\in\Fix(W)$ one has \[\mathop {\lim \sup }\limits_k \left(\|x^{k+1}-x\|-\|x^k-x\|\right)\le0.\]
\end{lem}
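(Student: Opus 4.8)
The plan is to rewrite the update in the compact form $x^{k+1} = \alpha_k S(x^k) + (1-\alpha_k)Q_k(x^k)$, to fix an arbitrary reference point $x \in \Fix(W)$, and to derive a single scalar recursion for $a_k := \|x^k - x\|$ of the familiar ``contraction gap plus vanishing perturbation'' type, to which an elementary $\max$-argument and the step-size assumptions apply. Everything then reduces to controlling one step of the iteration against this fixed $x$.

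First I would record the per-operator estimates at $x$. Set $C_S := \|S(x) - x\|$ and $C_T := \|T(x) - x\|$, which are finite constants depending only on $x$. Since $W$ is non-expansive and $W(x) = x$, one has $\|W(x^k) - x\| \le \|x^k - x\|$; since $S$ is an $r$-contraction, $\|S(x^k)-x\| \le r\|x^k - x\| + C_S$; and since $T$ is non-expansive, $\|T(x^k) - x\| \le \|x^k - x\| + C_T$. The decisive step is to estimate $Q_k$ at the reference point: writing $Q_k(x^k) - x = \beta_k\bigl(T(x^k)-x\bigr) + (1-\beta_k)\bigl(W(x^k)-x\bigr)$ and inserting the last two bounds yields
\[
\|Q_k(x^k) - x\| \le \|x^k - x\| + \beta_k C_T .
\]
The point is that $x$ is a fixed point of $W$ but \emph{not} of $T$, which is exactly why the remainder $\beta_k C_T$ survives.

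Combining the triangle inequality on the convex combination $x^{k+1} = \alpha_k S(x^k)+(1-\alpha_k)Q_k(x^k)$ with the above gives the one-step recursion
\[
\|x^{k+1}-x\| \le \bigl[1-\alpha_k(1-r)\bigr]\|x^k - x\| + \alpha_k C_S + (1-\alpha_k)\beta_k C_T .
\]
Here I invoke Assumption \ref{ass:i}: since $\limsup_k \beta_k/\alpha_k = \delta < \infty$, there is $k_0$ with $\beta_k \le (\delta+1)\alpha_k$ for $k \ge k_0$, so the last two terms are at most $\alpha_k\bigl[C_S + (\delta+1)C_T\bigr]$. Writing $\gamma_k := \alpha_k(1-r) \in (0,1)$ and $b := \bigl[C_S + (\delta+1)C_T\bigr]/(1-r)$, the recursion reads $\|x^{k+1}-x\| \le (1-\gamma_k)\|x^k-x\| + \gamma_k b$, whence inductively $\|x^{k+1}-x\| \le \max\{\|x^k-x\|, b\}$. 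Absorbing the finitely many indices below $k_0$ produces a uniform bound $\|x^k - x\| \le C_x$, which is the asserted boundedness; the three companion bounds on $W(x^k)$, $S(x^k)$, $T(x^k)$ are then immediate from the per-operator estimates of the previous paragraph, with the stated constants $C_S$ and $C_T$.

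Finally, for the $\limsup$ claim I would return to the same one-step recursion rewritten as
\[
\|x^{k+1}-x\| - \|x^k-x\| \le -\alpha_k(1-r)\|x^k-x\| + \alpha_k C_S + (1-\alpha_k)\beta_k C_T ,
\]
discard the non-positive leading term, and pass to the limit: since $\alpha_k \to 0$ (Assumption \ref{ass:ii}) and $\beta_k = (\beta_k/\alpha_k)\,\alpha_k \to 0$ because $\beta_k/\alpha_k$ stays bounded, both remaining terms vanish, giving $\limsup_k\bigl(\|x^{k+1}-x\| - \|x^k-x\|\bigr) \le 0$. The only genuine obstacle is the choice of reference point: because $x \in \Fix(W)$ need not lie in $\Fix(T)$, the iteration is a perturbation of, rather than a true average over, maps sharing a common fixed point, and the finiteness of $\delta$ in Assumption \ref{ass:i} is precisely what allows the perturbation $\beta_k C_T$ to be absorbed into the $\alpha_k$-scaled contraction gap $1-r$.
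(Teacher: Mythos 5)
Your proof is correct and follows essentially the same route as the paper's: the same rewriting $x^{k+1}=\alpha_k S(x^k)+(1-\alpha_k)Q_k(x^k)$, the same one-step recursion with the perturbation $\alpha_k C_S+(1-\alpha_k)\beta_k C_T$ absorbed via $\beta_k\le(\delta+1)\alpha_k$, and the same inductive max-argument producing $C_x$. The only differences are cosmetic---you bound $Q_k(x^k)-x$ by expanding the convex combination directly rather than splitting it as $(Q_k(x^k)-Q_k(x))+(Q_k(x)-x)$ as the paper does, and you are more careful than the paper in flagging that the final $\limsup$ step actually requires $\alpha_k\to0$ and hence $\beta_k\to0$ (Assumption \ref{ass:ii}), a dependence the paper's proof leaves implicit.
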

\begin{proof}
 Taking into account $\delta\in[0,+\infty)$, from Assumption \ref{ass:i} one sees that there exists $\delta_0>\delta$ and $k_0\in\mathbb{N}$ such that for every $k\ge k_0$, one has $\beta_k<\delta_0\alpha_k$. On the other hand the sequence $\{x^{k+1}\}$ can easily be rewritten as 
 \[{x^{k + 1}} = {\alpha _k}S({x^k}) + (1 - {\alpha _k}){Q_k}({x^k}).\]
 Now, for given $x\in \Fix(W)$ we obtain 
 \begin{align}
    \|{x^{k + 1}} - x\| &\nonumber= \|{\alpha _k}(S({x^k}) - S(x)) + {\alpha _k}(S(x) - x) \\
    &\nonumber+ (1 - {\alpha _k})({Q_k}({x^k}) - {Q_k}(x))+(1 - {\alpha _k})({Q_k}(x) - x)\| \\
    &\label{eq:bounded}\le {\alpha _k}r\|{x^k} - x\| + {\alpha _k}\|S(x) - x\|\\
    &\nonumber+ (1 - {\alpha _k})\|{x^k} - x\| + (1 - {\alpha _k}){\beta _k}\|T(x) - x\|\\
    &\nonumber\le(1 - (1 - r){\alpha _k})\|{x^k} - x\| +{\alpha _k}(\|S(x) - x\| + {\delta _0}\|T(x) - \;x\|)\\
    &\nonumber\le \max \left\{ {\|{x^k} - x\|,\frac{1}{{1 - r}}(\|S(x) - x\| + {\delta _0}\|T(x) - x\|)} \right\}\\
    &\nonumber\le \max \left\{ {\|{x^{k - 1}} - x\|,\frac{1}{{1 - r}}(\|S(x) - x\| + {\delta _0}\|T(x) - x\|)} \right\}\\
    &\nonumber\le ... \le \max \left\{ {\|{x^{{k_0}}} - x\|,\frac{1}{{1 - r}}(\|S(x) - x\|+ \delta_0 \|T(x) - x\|)} \right\}: = {C_x}.
 \end{align}
 There, $r$ is the coefficient of the contraction map $S$.
 Therefore, $\{x^k\}$ is bounded.
 Also, for given $x\in\Fix(W)$ from \eqref{eq:bounded} one can observe that 
 \begin{equation}\label{eq:bound2}
     \|{x^{k + 1}} - x\| \le (1-(1-r)\alpha_k)\|{x^k} - x \|+ {\alpha _k}\|S(x) - x \|+ (1 - {\alpha _k}){\beta _k}\|T(x) -x\|,
 \end{equation}
 which implies that \[\mathop {\lim \sup }\limits_k (\|x^{k+1}-x\|-\|x^k-x\|)\le0.\]
 \end{proof}
 \begin{rem}\label{rem:boundedn}
 One can see that if 
 $\Fix(W)\cap \Fix(T)\ne\emptyset$, then $\{x^k\}$ is bounded, without taking into consideration the condition $\delta\in[0,\infty].$ Indeed when $x\in\Fix(W)\cap\Fix(T)$ by \eqref{eq:bound2} we then have 
 \begin{align*}
     \|{x^{k + 1}} - x\| &\le (1-(1-r)\alpha_k)\|{x^k} - x \|+ {\alpha _k}\|S(x) - x \|\le\max\{\|x^k-x\|,\frac{\|S(x)-x\|}{1-r}\}\\
 &\le...\le\max\{\|x^{k_0}-x\|,\frac{\|S(x)-x\|}{1-r}\},
 \end{align*}
 which shows that $\{x^k\}$ is bounded.
 \end{rem}
The following simple example shows that the boundedness of $\{x^k\}$ does not necessarily hold when $\delta=\infty$.
\begin{example}
Take $X=\mathbb{R}$ and $\alpha_k=\frac{1}{k}$ and $\beta _k =\frac{1}{\sqrt{k}}$. Furthermore, let $S(x)=\frac{x}{4},T(x)=x+5, W(x)=x$. Clearly, $S$ is contraction, and $T$ and $W$ are non-expansive. It is easy to check that  $\mathop {\delta  = \lim \sup }\limits_k \frac{{{\beta _k}}}{{{\alpha _k}}} = \infty $, and  $x^k\to\infty$ from starting point $x^0=1$. 
\end{example}

The next Lemma will be useful in the sequel the proof is slightly similar to \cite[Theorem 4.1]{lu2009hybrid}
\begin{lem}\label{lem:l2}
Suppose that $\{x^k\}$ is bounded.\\\\
(a) If Assumption \ref{ass:v} holds, then $\{x^k\}$ is asymptotically regular, i.e,  
\[\mathop {\lim }\limits_{k \to \infty } \left\|{x^{k + 1}} - {x^k}\right\| = 0.\]
(b) If Assumptions \ref{ass:ii}, \ref{ass:iv} and \ref{ass:v} hold, then $\mathop {\lim }\limits_{k \to \infty } \left\|\frac{{{x^{k + 1}} - {x^k}}}{{{\beta _k}}}\right\| = 0.$\\
(c) one has $w(\{x^k\})\subseteq \Fix(W).$
\end{lem}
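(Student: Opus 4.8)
The plan is to reduce all three parts to the scalar recursion $a_k \le (1-\gamma_k)a_{k-1}+\delta_k$ and then apply Lemma~\ref{lemma:L1}. The common ingredient is the compact form $x^{k+1} = \alpha_k S(x^k) + (1-\alpha_k)Q_k(x^k)$ already used in Lemma~\ref{lem:boundedness}. Subtracting the analogous identity for $x^k$ and regrouping the increments --- two coming from the change of argument $x^k$ versus $x^{k-1}$ inside $S$ and $Q_k$, and two from the parameter changes, using the identity $Q_k(x^{k-1})-Q_{k-1}(x^{k-1}) = (\beta_k-\beta_{k-1})[T(x^{k-1})-W(x^{k-1})]$ --- then invoking that $S$ is an $r$-contraction, that $Q_k$ is non-expansive, and that Lemma~\ref{lem:boundedness} bounds $S,T,W,Q_k$ along the iterates by a common constant $M$, taking norms would give the workhorse estimate
\[\|x^{k+1}-x^k\| \le \left(1-(1-r)\alpha_k\right)\|x^k-x^{k-1}\| + M\left(|\alpha_k-\alpha_{k-1}| + |\beta_k-\beta_{k-1}|\right).\]
This single inequality drives parts (a) and (b).

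For (a) I would take $a_k=\|x^k-x^{k-1}\|$, $\gamma_k=(1-r)\alpha_k$ and $\delta_k=M(|\alpha_k-\alpha_{k-1}|+|\beta_k-\beta_{k-1}|)$. The condition $\sum\gamma_k=\infty$ is the standing Assumption~\ref{ass:ii}. For the ratio condition, observe that $\beta_k=(\beta_k/\alpha_k)\alpha_k\to 0$ by Assumptions~\ref{ass:i} and \ref{ass:ii}, so the quotient in Assumption~\ref{ass:v} --- whose denominator $\alpha_k\beta_k$ is the smaller one --- dominates $\frac{|\alpha_k-\alpha_{k-1}|+|\beta_k-\beta_{k-1}|}{\alpha_k}$; hence $\delta_k/\gamma_k\to 0$ and Lemma~\ref{lemma:L1} gives $a_k\to 0$.

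For (b) I would divide the workhorse estimate by $\beta_k$ and set $b_k=\|x^{k+1}-x^k\|/\beta_k$, so that $\|x^k-x^{k-1}\|=\beta_{k-1}b_{k-1}$, which yields
\[b_k \le \left(1-(1-r)\alpha_k\right)\frac{\beta_{k-1}}{\beta_k}\,b_{k-1} + \frac{M}{\beta_k}\left(|\alpha_k-\alpha_{k-1}|+|\beta_k-\beta_{k-1}|\right).\]
The decisive step, which I expect to be the main obstacle, is to recast the coefficient $\left(1-(1-r)\alpha_k\right)\frac{\beta_{k-1}}{\beta_k}$ as a genuine $1-\gamma_k$ with $\gamma_k\in(0,1)$ and $\sum\gamma_k=\infty$. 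Writing $\frac{\beta_{k-1}}{\beta_k} = 1 + \beta_{k-1}\left(\frac{1}{\beta_k}-\frac{1}{\beta_{k-1}}\right)$ and using Assumption~\ref{ass:iv} (so that $\left|\frac{1}{\beta_k}-\frac{1}{\beta_{k-1}}\right|\le K\alpha_k$ eventually) bounds that coefficient by $1-\alpha_k\left((1-r)-K\beta_{k-1}\right)$; since $\beta_{k-1}\to 0$ the bracket is eventually at least $\tfrac{1-r}{2}$, so $\gamma_k:=\alpha_k\left((1-r)-K\beta_{k-1}\right)$ qualifies. The resulting $\delta_k/\gamma_k$ is then majorized by a constant multiple of $\frac{|\alpha_k-\alpha_{k-1}|+|\beta_k-\beta_{k-1}|}{\alpha_k\beta_k}$, which vanishes precisely by Assumption~\ref{ass:v}, and Lemma~\ref{lemma:L1} gives $b_k\to 0$. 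The delicacy is exactly this balance: Assumption~\ref{ass:iv} must hold $\beta_{k-1}/\beta_k$ close enough to $1$ not to overwhelm the contraction factor $(1-r)\alpha_k$, while Assumption~\ref{ass:v} must absorb the now $\beta_k$-amplified perturbation.

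For (c) I would argue by demiclosedness. From the algorithm, $x^{k+1}-Q_k(x^k)=\alpha_k\left(S(x^k)-Q_k(x^k)\right)$, so boundedness together with $\alpha_k\to 0$ forces $\|x^{k+1}-Q_k(x^k)\|\to 0$; combining with part (a) yields $\|x^k-Q_k(x^k)\|\to 0$. Since $x^k-W(x^k)=\left(x^k-Q_k(x^k)\right)+\beta_k\left(T(x^k)-W(x^k)\right)$ with $\beta_k\to 0$ and $T(x^k)-W(x^k)$ bounded, it follows that $\|x^k-W(x^k)\|\to 0$. Finally, for any $x^*$ with $x^{k_i}\to x^*$, non-expansiveness of $W$ gives $\|x^*-W(x^*)\|\le 2\|x^*-x^{k_i}\|+\|x^{k_i}-W(x^{k_i})\|\to 0$, so $x^*\in\Fix(W)$, which establishes $w(\{x^k\})\subseteq\Fix(W)$.
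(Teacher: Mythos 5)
Your parts (a) and (c) follow the paper's proof essentially verbatim: the same workhorse estimate obtained from $x^{k+1}=\alpha_k S(x^k)+(1-\alpha_k)Q_k(x^k)$, the same application of Lemma \ref{lemma:L1} with $\gamma_k=(1-r)\alpha_k$, and in (c) the same ``$\|x^{k+1}-W(x^k)\|\to 0$ plus part (a)'' argument (your appeal to $\beta_k\to 0$ in (c) is also made by the paper, so you are on equal footing there, even though neither hypothesis set literally guarantees it). In (a) your invocation of $\beta_k\to 0$ is harmless but unnecessary: all you need is $\beta_k\le 1$, which holds because $\beta_k$ is a convex-combination weight, and this already gives $\frac{|\alpha_k-\alpha_{k-1}|+|\beta_k-\beta_{k-1}|}{\alpha_k}\le\frac{|\alpha_k-\alpha_{k-1}|+|\beta_k-\beta_{k-1}|}{\alpha_k\beta_k}$.

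Part (b) is where you genuinely diverge from the paper, and where there is a real gap. Your multiplicative absorption of $\frac{\beta_{k-1}}{\beta_k}$ into the contraction factor hinges on the claim $\beta_k\to 0$, justified by writing $\beta_k=(\beta_k/\alpha_k)\alpha_k$ and invoking Assumptions \ref{ass:i} and \ref{ass:ii}. That product argument is valid only when $\delta=\limsup_k\beta_k/\alpha_k<\infty$; Assumption \ref{ass:i} does not assert finiteness of $\delta$, part (b) is stated without it, and --- crucially --- the lemma's principal applications (Lemma \ref{lem:l3}, Theorem \ref{thm:deltainfty}) are exactly in the regime $\widetilde\delta=\infty$, where $\beta_k/\alpha_k\to\infty$ and the decay of $\beta_k$ cannot be deduced this way. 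Concretely, if $\beta_k$ returns near a level $\bar\beta>0$ infinitely often while the constant $K$ of Assumption \ref{ass:iv} satisfies $K\bar\beta>1-r$, your bracket $(1-r)-K\beta_{k-1}$ is negative infinitely often, so your $\gamma_k$ is not a sequence in $(0,1)$ and Lemma \ref{lemma:L1} does not apply. The paper avoids this entirely by an additive splitting, $\frac{1}{\beta_k}=\frac{1}{\beta_{k-1}}+\left(\frac{1}{\beta_k}-\frac{1}{\beta_{k-1}}\right)$: it keeps $\gamma_k=(1-r)\alpha_k$ and pushes the cross term $\left|\frac{1}{\beta_k}-\frac{1}{\beta_{k-1}}\right|\,\|x^k-x^{k-1}\|$ into $\delta_k$, where it vanishes relative to $\gamma_k$ because Assumption \ref{ass:iv} bounds $\frac{1}{\alpha_k}\left|\frac{1}{\beta_k}-\frac{1}{\beta_{k-1}}\right|$ and part (a) gives $\|x^k-x^{k-1}\|\to 0$; no decay of $\beta_k$ is needed. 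Alternatively, your own route can be repaired without Assumption \ref{ass:iv} at all: observe that $\beta_{k-1}\left|\frac{1}{\beta_k}-\frac{1}{\beta_{k-1}}\right| = \frac{|\beta_k-\beta_{k-1}|}{\beta_k}=\alpha_k\cdot\frac{|\beta_k-\beta_{k-1}|}{\alpha_k\beta_k}=\alpha_k\epsilon_k$ with $\epsilon_k\to0$ by Assumption \ref{ass:v} alone, so your coefficient is at most $1-\alpha_k\left((1-r)-\epsilon_k\right)$ and the bracket is eventually positive regardless of the behaviour of $\beta_k$.
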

\begin{proof}
Since $\{x^k\}$ is bounded then there exists constant $M$ such that 
\[M \ge \mathop {\sup }\limits_{k \ge 1} \left\{ {\|S({x^{k - 1}})\|,\|T({x^{k - 1}})\|,\|W({x^{k - 1}})\|} \right\}.\]
So we have 
\begin{align*}
 \| {{Q_k}({x^k}) - {Q_{k - 1}}({x^{k - 1}})} \| &= \| {{Q_k}({x^k}) - {Q_k}({x^{k - 1}}) + {Q_k}({x^{k - 1}}) - {Q_{k - 1}}({x^{k - 1}})} \|\\
  &= \| {{Q_k}({x^k}) - {Q_k}({x^{k - 1}}) + ({\beta _k} - {\beta _{k - 1}})(T({x^{k - 1}}) - W({x^{k - 1}}))} \|\\
  &\le \| {{x^k} - {x^{k - 1}}} \| + 2M\lvert{\beta _k} - {\beta _{k - 1}}\rvert.
\end{align*}
 Now, one can write 
\begin{align}
    \|{x^{k + 1}} - {x^k}\| &\nonumber= \|({\alpha _k}S({x^k}) + (1 - {\alpha _k}){Q_k}({x^k})) - ({\alpha _{k - 1}}S({x^{k - 1}}) + (1 - {\alpha _{k - 1}}){Q_{k - 1}}({x^{k - 1}}))\|\\ 
&\nonumber=\|(1 - {\alpha _k})({Q_k}({x^k}) - {Q_{k - 1}}({x^{k - 1}})) + ({\alpha _k} - {\alpha _{k - 1}})(S({x^{k - 1}}) - {Q_{k - 1}}({x^{k - 1}}))\\ 
  &\nonumber+ {\alpha _k}(S({x^k}) - S({x^{k - 1}}))\|\\
  &\nonumber\le (1 - (1 - r){\alpha _k})\|{x^k} - {x^{k - 1}}\| + 2M\lvert{\beta _k} - {\beta _{k - 1}}\rvert + 2M\lvert{\alpha _k} - {\alpha _{k - 1}}\rvert\\
  &\label{eq:ess1}\le (1 - (1 - r){\alpha _k})\|{x^k} - {x^{k - 1}}\| +\frac{2M\lvert{\beta _k} - {\beta _{k - 1}}\rvert + 2M\lvert{\alpha _k} - {\alpha _{k - 1}}\rvert}{\beta_k}.
\end{align}
 Noticing Assumption \ref{ass:v} and
 setting $a_k=\|{x^{k+1}} - {x^k}\|$, ${\gamma _k}= (1 - r){\alpha _k}$, and \[{\delta _k}= \frac{2M\lvert{\beta _k} - {\beta _{k - 1}}\rvert + 2M\lvert{\alpha _k} - {\alpha _{k - 1}}\rvert}{\beta_k},\]
 one can apply Lemma \ref{lemma:L1} and the proof of part (a) is complete.\\
 \newline
 
 To prove part (b):
 Dividing both sides of the inequality \eqref{eq:ess1} by $\beta_k$, we obtain 
 \begin{align}
     \frac{{\|{x^{k + 1}} - {x^k}\|}}{{{\beta _k}}} &\nonumber\le ({1 - (1 - r){\alpha _k}})\left(\frac{1}{{{\beta _k}}} + \frac{1}{{{\beta _{k - 1}}}} - \frac{1}{{{\beta _{k - 1}}}}\right)\|{x^k} - {x^{k - 1}}\|\\
     &\nonumber+ 2M\left(\frac{{\lvert{\beta _k} - {\beta _{k - 1}}\rvert + \lvert{\alpha _k} - {\alpha _{k - 1}}\rvert}}{{{\beta _k}}}\right)\\
     &\nonumber\le (1 - (1 - r){\alpha _k})\left(\frac{{\|{x^k} - {x^{k - 1}}\|}}{{{\beta _{k - 1}}}} \right)+ \left\lvert\frac{1}{{{\beta _k}}} - \frac{1}{{{\beta _{k - 1}}}}\right\rvert\left(\|{x^k} - {x^{k - 1}}\|\right)\\
     &\nonumber+ 2M\left(\frac{{\left\lvert{\beta _k} - {\beta _{k - 1}}\right\rvert + \lvert{\alpha _k} - {\alpha _{k - 1}}\rvert}}{{{\beta _k}}}\right).
 \end{align}
 Using Assumptions \ref{ass:ii}, \ref{ass:iv}, \ref{ass:v} and by similar reasoning at part (a) the assertion follows from Lemma \ref{lemma:L1}.\\
 \newline 
 
 To prove part $(c):$ 
 By boundednes of $\{x^k\}$ and $\alpha_k\to0$ and $\beta_k\to0$ it is clear to see that
 \begin{align*}
   \|{x^{k + 1}} - W({x^k})\|&= \|{\alpha _k}S({x^k}) + (1 - {\alpha _k}){\beta _k}T({x^k}) + (1 - {\alpha _k})(1 - {\beta _k})W({x^k}) - W({x^k})\|\\
   &=\|{\alpha _k}S({x^k}) + (1 - {\alpha _k}){\beta _k}T({x^k}) + ({\alpha _k}{\beta _k} + {\alpha _k} + {\beta _k})W({x^k})\|\\
   &\le\alpha_k\|S({x^k})\| + (1 - {\alpha _k}){\beta _k}\|T({x^k})\| + ({\alpha _k}{\beta _k} + {\alpha _k} + {\beta _k})\|W({x^k})\|
 \end{align*}
 and we have $\|x^{k+1}-W(x^k)\|\to0$ which together with part (a) gives the conclusion of part (c).
 \end{proof}

By virtue of the prior \textcolor{black}{lemma}, we are able, in many situations, to get a unique solution for the multilevel variational inequality without additional conditions on mappings $S, T, W$. 

{\begin{table}[t]
\vspace{.2 in}
\begin{center}
\begin{tabular}{|c|c|c|c|c|}  
\hline 
Assumptions & $\delta:=\limsup\frac{\beta_k}{\alpha_k}$ & $\widetilde{\delta}:=\lim\frac{\beta_k}{\alpha_k}$& An Example& Results  \\
\hline
Ass.~\ref{ass:ii},~\ref{ass:trilevelassumption} & $\delta=0$ & ... & Ex. \ref{ex:table} (a) & Thm. \ref{thm:deltazero}\\
Ass.~\ref{ass:ii},~\ref{ass:iv},~\ref{ass:v} & $\delta\in[0,+\infty)$ & ... & Ex. \ref{ex:table} (c)& Prop. \ref{lem:deltastrictlypositive}\\
Ass.~\ref{ass:ii},~\ref{ass:iv},~\ref{ass:v},~$(A_1)$,~\textcolor{black}{$(A_2)$} & ... & $\widetilde{\delta}=\infty$ & Ex. \ref{ex:table} (d)& Thm. \ref{thm:deltainfty}\\
%Ass.~\ref{ass:ii},~\ref{ass:iv},~\ref{ass:v},~$(A_3),~(A_4)$ & ... & $\widetilde{\delta}=\infty$ & Ex.  \ref{ex:table} (d)& Thm. \ref{thm:deltainfty2}\\
%Ass.~\ref{ass:ii},~\ref{ass:iv},~\ref{ass:v},~$(A_5)$ & ... & $\widetilde{\delta}=\infty$ & Ex. \ref{ex:table} (e)& Thm. \ref{thm:assA1''}\\
Ass.\ref{ass:ii},~\ref{ass:iv},~\ref{ass:v},~\ref{ass:trilevelassumption},~$(A_2)$ & $\delta\in[0,\infty)$ &... & Ex. \ref{ex:table} (f)& Prop. \ref{prop:deltainfinty}
%\ref{thm:deltaone}
\\
\hline
\end{tabular}
\end{center}
\caption{An overview of our results in Sections \ref{sec:asymptotic2}--%\ref{sec:existence},
with the corresponding assumptions on $\alpha_k$ and $\beta_k$ and examples of the series.}
\label{tab:alphabetatheorems}
\end{table}

\subsection{Convergence analysis under assumptions on step-sizes $\alpha_k$ and $\beta_k$}
\label{sec:asymptotic2}
Next, we shall explore the convergence guarantees associated with different cases of $\delta$. %\textcolor{black}{Now there is no empty space})
We summarize these results, which depend on problem assumptions and parameter regimes, in \textcolor{black}{Table~\ref{tab:alphabetatheorems}}.
\textcolor{black}{ Let us consider the existence of a solution for the convex trilevel optimization problem \eqref{prob:tri-levelintro}. We will analyze this in multiple stages. It is worthwhile to note that the convergence behavior towards $X^*$ is made complex by the interconnection among the three layers. 
On the whole, the non-expansive operators $T$ and $W$ do not increase the distance between any two points in the iteration for $k$ large enough, and $S$ contracts the distance between points in the sequence. As step size $\alpha_k$ goes to zero, the ascendancy of the contraction mapping $S$ diminishes, and the sequence $\{x^k\}$ becomes dominated by the non-expansive mappings $T$ and $W$. The exact convergence behavior to a specific fixed point in $X^*$ will depend on additional properties of the individual operator $\Fix(W)=Y^*$ or its corresponding level $\phi_1$, such as the quadratic growth condition and linearly regular bound.
%that can not be easily described. 
First, let us consider the consistent case, i.e., $\Fix(T)\cap\Fix(W)\ne\emptyset$, and subsequently, further cases depending on the error bound condition.}

Let us now present the key technical lemma concerning the case of $\widetilde{\delta}=\infty$, which relates to the convergence of the iteration. It establishes a connection between the set of cluster points of the sequence ${x^k}$ and the solution set of the variational inequality $VI(T,\Fix(W))$:
\begin{lem}\label{lem:l3}
Assume $\widetilde{\delta}=\infty$, together with Assumptions \ref{ass:ii}, \ref{ass:iv}, and \ref{ass:v}. Furthermore, suppose that $\{x^k\}$ is bounded. Then every cluster point of the sequence $\{x^k\}$ is in $VI(T,\Fix(W))$, i.e., 
\[w(\{x^k\}) \subseteq \left\{ {x \in \Fix(W):{\mkern 1mu} {\mkern 1mu} {\mkern 1mu} {\mkern 1mu} \left\langle {(I - T){x},y{ -x}} \right\rangle  \ge {\rm{0}}{\mkern 1mu} ,{\mkern 1mu} {\mkern 1mu} {\mkern 1mu} \forall y \in {\Fix}(W){\mkern 1mu} } \right\}.\]
\end{lem}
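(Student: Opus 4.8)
The plan is to re-derive, in the regime $\widetilde{\delta}=\infty$, the variational inequality satisfied by the cluster points by reworking the identity that already underlies the proof of Theorem~\ref{thm:deltastrictlypositive}. First I would rewrite the iteration~\eqref{eq:trilevelalg}. Since the coefficients $\alpha_k$, $(1-\alpha_k)\beta_k$, $(1-\alpha_k)(1-\beta_k)$ sum to $1$, one obtains the exact identity
\[
x^k - x^{k+1} = \alpha_k (I-S)x^k + (1-\alpha_k)\beta_k (I-T)x^k + (1-\alpha_k)(1-\beta_k)(I-W)x^k.
\]
Setting $y^k := \tfrac{1}{\beta_k}(x^k - x^{k+1})$ as in the proof of Theorem~\ref{thm:deltastrictlypositive}, I would take the inner product of this identity with $x^k - y$ for an arbitrary $y\in\Fix(W)$. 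Because $(I-W)y = 0$ and $I-W$ is monotone, the last term is $\langle (I-W)x^k-(I-W)y,\,x^k-y\rangle \ge 0$, so it may be dropped; this reproduces exactly inequality~\eqref{eq:last}.

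Next I would divide~\eqref{eq:last} through by $\beta_k$ to isolate the $(I-T)$ contribution:
\[
\langle y^k, x^k - y\rangle \ge \frac{\alpha_k}{\beta_k}\langle (I-S)x^k, x^k - y\rangle + (1-\alpha_k)\langle (I-T)x^k, x^k - y\rangle.
\]
Now I would pass to a cluster point. Let $x\in w(\{x^k\})$ with $x^{k_i}\to x$; by Lemma~\ref{lem:l2}(c) we have $x\in\Fix(W)$, so $x$ is an admissible candidate. Along this subsequence I would control each term: $\langle y^{k_i}, x^{k_i}-y\rangle \to 0$ because $y^k\to 0$ by Lemma~\ref{lem:l2}(b) while $\{x^k\}$ is bounded; the first term on the right vanishes because $\tfrac{\alpha_k}{\beta_k}\to 0$ (as $\widetilde{\delta}=\lim \beta_k/\alpha_k=\infty$) and $(I-S)x^k = x^k - S(x^k)$ is bounded by boundedness of $\{x^k\}$ and the Lipschitz property of $S$; finally $(1-\alpha_k)\to 1$ by Assumption~\ref{ass:ii}, and $I-T$ is continuous, so $\langle (I-T)x^{k_i}, x^{k_i}-y\rangle \to \langle (I-T)x, x-y\rangle$.

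Taking the limit therefore yields $0 \ge \langle (I-T)x, x-y\rangle$, that is, $\langle (I-T)x,\,y-x\rangle \ge 0$ for every $y\in\Fix(W)$, which is precisely the assertion that $x\in VI(T,\Fix(W))$ in the form stated in the Lemma. The main work is not in this final limiting argument, which is routine, but in the ingredients it consumes: the crucial quantitative input is $y^k\to 0$, i.e.\ Lemma~\ref{lem:l2}(b), whose proof is where Assumptions~\ref{ass:ii}, \ref{ass:iv}, and~\ref{ass:v} are genuinely used. The one point requiring care is the sign bookkeeping when discarding the $(I-W)$ term (making sure monotonicity is invoked with the correct orientation of $x^k-y$) and confirming that the $(I-S)$ term is annihilated by the ratio $\alpha_k/\beta_k\to 0$ rather than merely staying bounded.
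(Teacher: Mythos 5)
Your proposal is correct and follows essentially the same route as the paper's own proof: you re-derive inequality \eqref{eq:B1} from the iteration identity and monotonicity of $I-W$, then kill the $(I-S)$ term via $\alpha_k/\beta_k\to 0$, the left side via Lemma \ref{lem:l2}(b) and boundedness, and pass to the limit along a subsequence using continuity of $I-T$. The only differences are cosmetic: you cite Lemma \ref{lem:l2}(c) explicitly to place the cluster point in $\Fix(W)$ (a step the paper leaves implicit, and a worthwhile addition), and the un-normalized inequality you first obtain is \eqref{eq:B1} rather than \eqref{eq:last}, a harmless mislabeling.
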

\begin{proof}
Let $y_k= \frac{1}{\beta_k}(x^k-x^{k+1})$ and  $x\in w(\{x^k\})$ be given. It was shown that \eqref{eq:B1} holds for every $y\in \Fix(W).$ Therefore,
\[\left\langle {{y^k},{x^k} - y} \right\rangle  \ge \frac{{{\alpha _k}}}{{{\beta _k}}}\left\langle {(I - S)({x^k}),{x^k} - y} \right\rangle  + (1 - {\alpha _k})\left\langle {(I - T)({x^k}),{x^k} -y} \right\rangle. \]
 Upon letting $k\to\infty$ in the previous inequality and utilizing the Assumption that $\widetilde{\delta}=\infty\Longleftrightarrow\mathop {\lim }\limits_k \frac{{{\alpha _k}}}{{{\beta _k}}} = 0$,  part (b) of Lemma \ref{lem:l2} ($y_k\to0$), and also boundedness of $\{x^k\}$, we are led to the following  
\[\mathop {\lim \sup }\limits_k \left\langle {(I - T)x^k,{x^k} - y} \right\rangle  \le 0,\,\,\,\,\,\,\forall y \in \Fix(W),\] 
which upon executing the limit,
\[\left\langle {(I - T)x,x - y} \right\rangle  \le 0,\,\,\,\,\,\,\forall y \in \Fix(W),\] 
and which in turn yields $x\in VI(T,\Fix(W))$.
\end{proof}

\textcolor{black}{
\begin{fact}\label{Fact.interior}
    If the interior of $X^*$ is non-empty then $X^*=\Fix(T)\cap\Fix(W)$, and as well $\{x^k\}$ is bounded.
\end{fact}
\begin{proof}
   First, we show $X^*=\Fix(T)\cap\Fix(W)$. Let there exist $x_0\in\rm{int}X^*$ and let $x\in\RR^n$ be given. Hence for sufficiently small $t\in(0,1)$, we have that $x_0+t(x-x_0)\in X^*\subset\Fix(W)$, which further implies \[{\phi _1}({x_0})={\phi _1}({x_0} + t(x - {x_0})) \le (1 - t){\phi _1}({x_0}) + t{\phi _1}({x}),\]
and so $\phi_1(x_0)\le\phi_1(x)$. This means that $x_0\in\Fix(T).$ Therefore, $\rm{int}X^*\subseteq\Fix(T)$. On the other hand, since $X^*$ is closed and convex, we therefore have \[X^*={\rm cl}({\rm int}(X^*))\subseteq\Fix(T).\]
Consequently, as we already have $X^*\cap\Fix(T)=\Fix(T)\cap\Fix(W)$, one can deduce that  $X^*=\Fix(T)\cap\Fix(W)$, which verifies the desired equality. Notably, as mentioned in Remark \ref{rem:boundedn}, it is evident that the sequence ${x^k}$ is bounded.
\end{proof}}

\textcolor{black}{
\begin{assume}\label{ass:trilevelassumption}
(Quadratic growth condition)
Suppose now that $\phi_1$ grows quadratically (globally) away
from a part of its minimizing set $Y^*=\Fix(W)$, i.e.,  $X^*$, meaning there is a real number  $\mu>0$ such that
\begin{align}\label{QGC}
    \phi_1(x)\ge\phi_1^*+\frac{\mu}{2}{\rm dist}^2(x,X^*)~~~\forall
    x\in\Omega_1\backslash\Fix(W)
\end{align}
where $\Omega_1=\mathcal{B}(0,C_{x_0})$ for given $x_0\in\Fix(W)$ and $\phi_1^*$ represents the optimal value of  $\phi_1$.
\end{assume}
The quadratic growth condition can be interpreted as a notion of sharpness assumption on the function $\phi_1$, which describes functions that exhibit at least the behavior of ${\rm dist}(x, X^*)$ .\\
Originally introduced to establish the convergence of trajectories for the gradient flow of analytic functions, Bolte et al. proposed an extension to non-smooth functions in their work published in \cite{bolte2007lojasiewicz}.\\
As a simple example, let us assume that $\phi_1(x,y)=0$ for $(x,y)\in[-1,1]\times[-1,1]$ and for otherwise $\phi_1(x,y)\ge2$
and $\Omega_1=\mathcal{B}_2(0,0)$, we get $\Fix(W)=[-1,1]\times[-1,1]$ now, considering $X^*=[-\frac{1}{2},\frac{1}{2}]\times[-\frac{1}{2},\frac{1}{2}]$, we will observe, through a straightforward investigation, that \eqref{QGC} is verified.  }  
\textcolor{black}{
\begin{thm} \label{thm:deltazero}
    Let Assumption \ref{ass:trilevelassumption} hold, and $\delta=0$. Then $\{x^k\}$ converges to some $x^*\in X^*$ such that 
\[\,\,\left\langle {{x^*} - S({x^*}),x - {x^*}} \right\rangle  \ge 0\,\,\,\,\,\,\,\,\,\,\forall x \in {X^*}.\]
\end{thm}
\begin{proof}
     Strong convexity of $\omega$, and contractivity of the operator $S$, together implies that there is unique $x^*\in X^*$ such that $x^*=P_{X^*}S x^*$ and $x^*\in VI(S,X^*)$, i.e.,  
\begin{align}\label{trilevel0}
    \left\langle {{x^*} - S({x^*}),x - {x^*}} \right\rangle  \ge 0,\,\,\,\,\,\,\,\forall x \in {X^*}.
\end{align}
Since the sequence $\{x^{k}\}$ is bounded, one sees that $x^k\in\Omega_1$.  Furthermore, utilizing assumption \ref{ass:trilevelassumption}, one may be readily verified that $w(x^k)\subset X^*$. Moreover, one can extract a convergent sub-sequence $\{x^{k_i}\}$ of $\{x^{k+1}\}$ or any sub-sequence thereof to $x^{'}\in X^*$, which holds by Lemma~\ref{lem:l2}, part c, and \eqref{trilevel0} so that
\begin{align}\label{eq:A2}
\mathop {\lim \sup }\limits_k \left\langle {S({x^*}) - {x^*},{x^{k+1}} - {x^*}} \right\rangle  &\nonumber= \mathop {\lim }\limits_i \left\langle {S({x^*}) - {x^*},{x^{{k_i}}} - {x^*}} \right\rangle\\
= \left\langle {S({x^*}) - {x^*},x^{'} - {x^*}} \right\rangle  \le 0.
\end{align}
Next, we show $x^k\to x^*$. Let the sequences $c_k$ and $d_k$ are defined as
\begin{align*}
    {c_k}:&= {\alpha _k}(S({x^k}) - S({x^*})) + (1 - {\alpha _k}){\beta _k}(T(x^k) - T(x^*))\\
    &+ (1 - {\alpha _k})(1 - {\beta _k})(W(x^k) - W({x^*})), \\
 {d_k}: &= {\alpha _k}(S({x^*}) - {x^*}) + (1 - {\alpha _k}){\beta _k}(T(x^*)  - x^*). 
\end{align*}
From above it is immediate that $c_k+d_k=x^{k+1}-x^*$, and $\|{c_k}\| \le (1 - (1 - r){\alpha _k})\|{x^k} - x^*\|$. 
By a simple calculation, one has  
\[\|{c_k} + {d_k}\|^2 \le \|{c_k}\|^2 + 2\left\langle {{d_k},{c_k} + {d_k}} \right\rangle,\]
and finally by plugging $c_k$ and $d_k$ in the previous inequality follows that
\begin{align} \label{eq:A5}
   \|{x^{k + 1}} - x^*\|^2&\nonumber\le(1 - (1 - r){\alpha _k})\|{x^k} - x^*\|^2 + 2{\alpha _k}\left\langle {S(x^*) - x^*,{x^{k + 1}} - x^*} \right\rangle\\
   &+2(1 - {\alpha _k}){\beta _k}\left\langle {T(x^*) - x^*,{x^{k + 1}} - x^*} \right\rangle.
\end{align}
Now, setting
\begin{align}\label{eq:A6}
\left\{ \begin{array}{l}
 {a_k}= \|{x^k} - {x^*}\|^2, \\ 
 {\gamma _k}= (1 - r){\alpha _k}, \\ 
 {\delta _k}= 2{\alpha _k}\left\langle {S({x^*}) - {x^*},{x^{k + 1}} - {x^*}} \right\rangle  + 2(1 - {\alpha _k}){\beta _k}\left\langle {T({x^*}) - {x^*},{x^{k + 1}} - {x^*}} \right\rangle.  \\ 
 \end{array}\right.
 \end{align}
One has that
\[a_{k+1}\le (1-\gamma_k)a_k+\delta_k.\]
Also, using the boundedness of $\{x^k\}$ together with $\delta=0$ we can conclude that $\mathop {\lim \sup }\limits_k \frac{{{\delta _k}}}{{{\gamma _k}}} \le 0.$ Indeed, taking into account \eqref{eq:A2} and $\delta=0$ gives
 \begin{align*}
   \mathop {\limsup }\limits_k \frac{{{\delta _k}}}{{{\gamma _k}}} &=\mathop{\limsup}_{k}\left[ \frac{2{\alpha _k}\left\langle {S({x^*}) - {x^*},{x^{k + 1}} - {x^*}} \right\rangle  + 2(1 - {\alpha _k}){\beta _k}\left\langle {T({x^*}) - {x^*},{x^{k + 1}} - {x^*}} \right\rangle}{(1 - r){\alpha _k}}\right]\\
   &\le\frac{2}{{1 - r}}\mathop {\lim \sup }\limits_k \left\langle {S({x^*}) - {x^*},{x^{k + 1}} - {x^*}} \right\rangle\\
   &+\frac{2}{{1 - r}}\mathop {\lim \sup }\limits_k (1 - {\alpha _k})\frac{{{\beta _k}}}{{{\alpha _k}}}\left\langle {T({x^*}) - {x^*},{x^{k + 1}} - {x^*}} \right\rangle\le 0. \\
 \end{align*}
The desired assertion now follows from Lemma \ref{lemma:L1}.
\end{proof}}
\textcolor{black}{
\begin{thm}
 Let Assumption \ref{ass:trilevelassumption} hold, and $\delta=\infty$. Moreover, assume that $\{x^k\}$ is bounded. Then $\{x^k\}$ converges to some $x^*\in X^*$ such that 
\[\,\,\left\langle {{x^*} - S({x^*}),x - {x^*}} \right\rangle  \ge 0\,\,\,\,\,\,\,\,\,\,\forall x \in {X^*},
\] 
 i.e, $ \min_{x\in X^*}\omega(x)=\omega(x^*)$.
\end{thm}
\begin{proof}
 As before, there is a unique $x^*\in X^*$ fixed point of the contraction map $P_{X^*}S$, i.e., $x^*=P_{X^*}S x^*$. Therefore $x^*\in VI(S,X^*)$ 
and 
\[\left\langle {{x^*} - S({x^*}), x - {x^*}} \right\rangle  \ge 0,\,\,\,\,\,\,\,\forall x \in {X^*}\] as a same method, due to the boundedness of $x^k$ we may get a subsequence $\{x^{k_i}\}$ converges to $x^{'}\in X^*$ such that 
\begin{align*}
\mathop {\lim \sup }\limits_k \left\langle {S({x^*}) - {x^*},{x^{k+1}} - {x^*}} \right\rangle  &\nonumber= \mathop {\lim }\limits_i \left\langle {S({x^*}) - {x^*},{x^{{k_i}}} - {x^*}} \right\rangle\\
= \left\langle {S({x^*}) - {x^*},x^{'} - {x^*}} \right\rangle  \le 0,
\end{align*}
and also there is subsequence $x^{k_j}$ converges to $x^{''}\in X^*$ such that
\begin{align}\label{eq:T}
\mathop {\lim \sup }\limits_k \left\langle {T({x^*}) - {x^*},{x^{k+1}} - {x^*}} \right\rangle  &\nonumber= \mathop {\lim }\limits_i \left\langle {T({x^*}) - {x^*},{x^{{k_i}}} - {x^*}} \right\rangle\\
= \left\langle {T({x^*}) - {x^*},x^{''} - {x^*}} \right\rangle\le0 .
\end{align}
We show the last inequality. Using Lemma \ref{lem:l3}, one derives that $x^{''}\in VI(I-T,\Fix(W))$, i.e., 
\[\left\langle {T(x^{''}) - x^{''},x - x^{''}} \right\rangle\le0~~\forall x\in\Fix(W)\]
Taking $x=x^*\in X^*\subseteq\Fix(W)$, gives $\left\langle {T(x^{''}) - x^{''},x^* - x^{''}} \right\rangle\le0$. Using
monotonicity of $I-T$ yields that $\langle T(x^*)-x^*,x^{''}-x^*\rangle\le0$ and this follows \eqref{eq:T}. The rest of the proof follows from \eqref{eq:A5} and \eqref{eq:A6}, and Lemma \ref{lemma:L1}.  
\end{proof}}
 
 %\textcolor{red}{Maybe we will remove this next (uninteresting case) for brevity later}
%Note that when $\beta_k=0$, Theorem \ref{thm:deltazero} simplifies to the Bilevel form without considering Assumption \ref{ass:trilevelassumption} and in this case it generalizes Proposition 5 of \cite{Sabach2017} in dropping the assumption $\mathop {\lim }\limits_k \frac{{{\alpha _{k + 1}}}}{{{\alpha _k}}} = 1$.

%. Our Assumptions are slightly milder. Sabach and Shtern have obtained the convergence result for Theorem \ref{thm:deltazero} under the  assumption
%of 
%\[Assumption\,(C)\,\,\left\{ \begin{array}{l}
% ${\alpha _k} \in (0,1],\,\,\sum\limits_{k = 1}^\infty  {{\alpha _k} = \infty },~ \alpha_k\to 0,\,$ and  
%$\mathop {\lim }\limits_{k \to \infty } \frac{{{\alpha _{k + 1}}}}{{{\alpha _k}}} = 1.$
%We drop the assumption $\mathop {\lim }\limits_k \frac{{{\alpha _{k + 1}}}}{{{\alpha _k}}} = 1$. For instance, if $\mathop {\lim }\limits_{k \to \infty } \frac{{{\alpha _{k + 1}}}}{{{\alpha _k}}}\ne 1$, the following example shows Proposition 5 of \cite{Sabach2017} would not allow several natural choices of sequences $\{\alpha_k\}$, but our assumption would.   
%\begin{example}
%Put $\beta_k=0$ and $\alpha _k=\frac{2+(-1)^k}{k+1}.$ 
% It is not difficult to check $\alpha_k\to 0$ and $\sum_k\alpha_k=\infty$ but $\mathop {\lim }\limits_k \frac{{{\alpha _{k + 1}}}}{{{\alpha _k}}}\ne1$ . But $\delta=0$ and our Theorem is applicable. 
%\end{example}
As another application of Theorem \ref{thm:deltazero}, one may point to Theorem 6.1 of \cite{xu2002iterative} for solving the following quadratic minimization problem:
\begin{equation}\label{eq:quadratic}
    \mathop {\min }\limits_{x \in K} [\omega(x):=\frac{\mu }{2}\left\langle {Ax,x} \right\rangle  + \frac{1}{2}\|x - u\|^2 - \left\langle {x,b} \right\rangle],
\end{equation}
where $K$ is a nonempty closed convex  and $\mu\ge 0$ is a real number, $u,b\in\RR^n$ and $A$ is a bounded linear operator which is positive ($\left\langle {Ax,x} \right\rangle  \ge 0$ for all $x\in\RR^n$). Set $S(x):=x-r\nabla\omega(x)$ and $T(x):=\prox_{\delta_K}(x)=P_K(x)$. Then the sequence $\{x^k\}$ generated by $x^{k+1}=\alpha_kS(x^k)+(1-\alpha_k)T(x^k)$ converges to the unique solution $x^*$ of problem \eqref{eq:quadratic} under the mild assumption $\alpha_k\to0$ and $\sum\limits_k {{\alpha _k}}  = \infty$. We drop the assumption that $\mathop {\lim }\limits_k \frac{{{\alpha _{k + 1}}}}{{{\alpha _k}}} = 1$. Notice that when we take $K=\RR^n$, then problem \eqref{eq:quadratic} reduces to a classical convex quadratic optimization problem, in which  case $x^k\to\prox_f(u)$ where
$f(u) = \frac{1}{2}\left\langle {Au,u} \right\rangle  + \left\langle {u,b} \right\rangle$ and $\prox_f(u)=(A+I)^{-1}(u-b)$.

\begin{rem}\label{rem:deltazero }
Knowing relation \eqref{eq:A5} and Assumption \ref{ass:ii}, we find out that the two following conditions together imply the convergence of the sequence $\{x^k\}$: 
\begin{equation}
    \mathop {\lim \sup }\limits_k \left\langle {(S - I){x^*},{x^k} - {x^*}} \right\rangle  \le 0,
\end{equation}
and
\begin{equation}\label{C2}
\mathop {\lim \sup }\limits_k \frac{{{\beta _k}}}{{{\alpha _k}}}\left\langle {(T - I){x^*},{x^k} - {x^*}} \right\rangle  \le 0.
\end{equation}
Thanks to the Assumptions of Theorem \ref{thm:deltazero}, $x^*$ solves $VI(S,X^*)$. This is due to the fact that $\delta=0$, which means $\beta_k\to 0$ faster than $\alpha_k\to 0$. Afterwards, the term $\alpha_k S(x^k)$ dominates, while the
term $\beta_k T(x^k)$ becomes negligible. When $\delta=\infty$, it is difficult to confirm the verification of condition \eqref{C2} without assuming bounded linear regularity to control the growth of $\|x-T(x)\|$.
\end{rem}
Up to now, we have shown that the sequence $\{x^k\}$ is bounded and convergent, provided that $\delta=0$. A natural question is to ask  whether the sequence $\{x^k\}$ is convergent  when $\delta$ is non-zero.
The following proposition guarantees, under the assumption $\delta\in[0,+\infty)$, that there is a particular variational inequality that is satisfied for any limit point of the sequence generated by the Algorithm.
 \begin{prop}\label{lem:deltastrictlypositive}
Assume $\delta<\infty$, together with Assumptions \ref{ass:ii}, \ref{ass:iv} and \ref{ass:v}. Then sequence $\{x^k\}$ converges to the unique solution of the variational inequality
\begin{equation}\label{A4.}
   \,\,\,\exists \widetilde{x} \in \Fix(W)\,\,\,\,\,\,\,\left\langle {(I - S)\widetilde{x} + \delta (I - T)\widetilde{x},x - \widetilde{x}} \right\rangle  \ge 0,{\mkern 1mu} {\mkern 1mu} {\mkern 1mu} {\mkern 1mu} {\mkern 1mu} {\mkern 1mu} \forall x \in \Fix(W).
\end{equation}
\end{prop} 
 
 \begin{proof}
Set $y_k=\frac{1}{\beta_k}(x^{k}-x^{k+1})$.  From part (b) of  Lemma \ref{lem:l2} we have $y_k\to0$ as $k\to\infty$. By the definition of iteration \eqref{eq:trilevelalg} and  monotonicity of $I-W$ for all $y\in \Fix(W)$, one sees easily that 
\begin{align}
 \left\langle {{y^k},{x^k} - y} \right\rangle &\nonumber=
 \frac{{{\alpha _k}}}{{{\beta _k}}}\left\langle {(I - S)x^k,{x^k} - y} \right\rangle  + (1 - {\alpha _k})\left\langle {(I - T)x^k,{x^k} - y} \right\rangle\\
 &\nonumber+\frac{{(1 - {\alpha _k})(1 - {\beta _k})}}{{{\beta _k}}}\left\langle {(I - W)x^k - (I - W)y,{x^k} - y} \right\rangle\\
 &\label{eq:B1}\ge\frac{{{\alpha _k}}}{{{\beta _k}}}\left\langle {(I - S)x^k,{x^k} - y} \right\rangle  + (1 - {\alpha _k})\left\langle {(I - T)x^k,x^k-y} \right\rangle,
\end{align}
which implies that 
\begin{equation}\label{eq:last}
\frac{{{\beta _k}}}{{{\alpha _k}}}\left\langle {{y^k},{x^k} - y} \right\rangle  \ge \left\langle {(I - S)x^k,{x^k} - y} \right\rangle  + \frac{{{\beta _k}(1 - {\alpha _k})}}{{{\alpha _k}}}\left\langle {(I - T)x^k,{x^k} - y} \right\rangle.
\end{equation}
Now, for given $x_1,x_2\in w(\{x^k\})$, there exist sub-sequences $\{x^{k_i}\}$ and $\{x^{k_j}\}$ of $\{x^k\}$, such that $x^{k_i}\to x_1$ and $x^{k_j}\to x_2$. On taking the limsup of \eqref{eq:last} and using the fact that $y_k\to 0$ and $\mathop {\lim \sup }\limits_k \frac{\beta_k}{\alpha_k}= \delta\in [0,\infty)$, we deduce that,
\begin{equation}\label{eq:rearrange}
 \left\{ \begin{array}{l}
 \left\langle {\left. {\delta (I - T)x_1 + (I - S)x_1,x_1 - y} \right\rangle } \right. \le 0,\, \\ 
 \left\langle {\left. {\delta (I - T)x_2 + (I - S)x_2,x_2 - y} \right\rangle } \right. \le 0,\, \\ 
 \end{array} \right.\,\,\,\,\,\,\,\forall y \in \Fix(W).
\end{equation}
Rearranging \eqref{eq:rearrange} by substituting $y=x_1$ and $y=x_2$ shows that
\begin{align} 
    \left\langle {(I - S)x_1,x_1 - x_2} \right\rangle  &\label{eq:B2}\le  - \delta \left\langle {(I - T)x_1,x_1 - x_2} \right\rangle, \\
    - \left\langle {(I - S){x_2},x_1 - x_2} \right\rangle  &\label{eq:B3} \le \delta \left\langle {(I - T)x_2,x_1 - x_2} \right\rangle. 
\end{align}
On the other hand, since $I-S$ is $(1-r)$-strongly monotone and $I-T$ is monotone by adding up inequalities \eqref{eq:B2} and \eqref{eq:B3}, one obtains that 
 \begin{align*}
    (1 - r)\|x_1 - x_2\|^2 &\le \left\langle {(I - S)x_1 - (I - S)x_2,x_1 - x_2} \right\rangle\\
    &\le  - \delta \left\langle {(I - T)x_1 - (I - T)x_2,x_1 - x_2} \right\rangle\\
    & \le 0.
 \end{align*}
 So, $x_1=x_2$. This shows that $\{x^k\}$ converges. (Here, we have used the fact that the sequence $\{x^k\}$ converges if and only if every sub-sequence of $\{x^k\}$ contains a convergent sub-sequence.) Setting $\widetilde{x}:=\mathop {\lim }\limits_{k \to \infty } {x^k}$, we then see from \eqref{eq:rearrange} that
 \[ \left\langle {(I-S)\widetilde{x}+\delta(I-T)\widetilde{x},x-\widetilde{x}} \right\rangle\ge0 ,\,\,\,\,\,\,\forall x \in\Fix(W).\]
This completes the proof.
\end{proof}
\textcolor{black}{
\begin{cor}
    For each operator $P\in[I-S,I-T]$ one has $VI(P,\Fix(W))\ne\emptyset$
    where $c\in[a,b]$ means that there is $t\in[0,1]$ such that $c=ta+(1-t)b$.
\end{cor}}

 %previous observations motivates the 
%The following definition originas in 
\subsection{Convergence analysis under an error-bound condition}
\label{sec:errorbound}

Here, we introduce an error-bound condition that facilitates additional convergence guarantees. 
Let us denote the closed ball of radius $\rho$ centred at $0$ by $\mathcal{B}(0,\rho)$.
 
 \textcolor{black}{\begin{defin} {\bf (Error bound condition), \cite{borwein2017convergence}}
Let $W:X\to X$ be such that $\Fix(W)\ne\emptyset$. We say that $W$ is boundedly  linearly regular if 
\[\forall \rho  > 0\,\left( {\exists \theta > 0} \right)\,\left( {\forall x \in\mathcal{B}(0;\rho )} \right)\,\,\,d(x,\Fix(W)) \le \theta\|x - W(x)\|.\]
note that in general $\theta$ depends on $\rho$, which we sometimes indicate by writing $\theta = \theta(\rho)$. 
\end{defin}}
\textcolor{black}{
The notion of a bounded linear regularity is a valuable property in optimization and variational analysis. It ensures that a function behaves well near its critical points, and has been used in \cite{bauschke2015linear} to analyze linear
convergence of algorithms involving nonexpansive mappings. An exemplary and practically significant illustration of an objective that is non-quasi-strongly convex yet satisfies the quadratic growth condition is the LASSO problem:
\begin{align}
    \min_{x\in\RR^n}[\frac{1}{2}\|Qx-b\|^2+\lambda\|x\|_1]
\end{align}
when the operator $Q$ has a nontrivial kernel. Further classes of functions that possess a regular error-bound property include the following:
\begin{example}
\[\begin{array}{l}
 X=\mathbb{R}^n,~~\|.\|=\|.\|_2 ~f(x) = \|Ax - b\|^2_2,\,\,\,\,g(x) = \|x\|_1\,\,\,\,\ \\ 
 X=\mathbb{R}^n,~~\|.\|=\|.\|_2 ~f(x) = \|Ax - b\|^2_2,\,\,\,\,\,g(x) = \left\{ \begin{array}{l}
 0\,\,\,\,\,\,\,\,\,\,\,x \in \mathcal{B}_2^{\frac{n}{2}} \\ 
  + \infty \,\,\,\,\,\,{\rm else} \\ 
 \end{array} \right. \\ 
 \|.\| = \|.\|_F, X=\mathbb{R}^{p\times n},\,\,\,\,\,\,\,\,\,f(x) =\|Ax - b\|^2_2,\,\,\,\,\,\,\,\,g(x) = \|x\|{_{\rm nuc}} \\ 
 \end{array}\]
\end{example}
}

% cf. Lemma 4.1 in https://link.springer.com/article/10.1007/s10208-017-9366-8
\begin{prop}\label{prop:reqularity}
For all $t\in\left(0,\frac{1}{L_{f_2}}\right]$ and $x\in\dom(\partial\phi_2)$ one has \[\|x - {T_t}(x)\| \le td(0,\partial {\varphi _2}(x)).\]
\end{prop}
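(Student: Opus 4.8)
The plan is to characterise $T_t(x)$ through the resolvent identity for the proximal map and then exploit monotonicity of $\partial g_1$. Write $p := T_t(x) = \prox_{tg_1}\bigl(x - t\nabla f_1(x)\bigr)$, which is well defined and single-valued since $g_1$ is proper, lower semicontinuous and convex. By the optimality condition defining the proximal mapping (equivalently, $\prox_{tg_1} = (I + t\,\partial g_1)^{-1}$), the point $p$ satisfies $x - t\nabla f_1(x) - p \in t\,\partial g_1(p)$. Setting $w := \tfrac{1}{t}\bigl(x - t\nabla f_1(x) - p\bigr)$, this reads $w \in \partial g_1(p)$ and, after rearranging, $x - p = t\bigl(\nabla f_1(x) + w\bigr)$. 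Hence $\|x - T_t(x)\| = t\,\|\nabla f_1(x) + w\|$, and the task reduces to bounding $\|\nabla f_1(x) + w\|$ by the norm of an arbitrary element of $\partial\phi_1(x)$.

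Next I would fix an arbitrary $\eta \in \partial g_1(x)$; since $f_1$ is convex and differentiable on all of $\mathbb{R}^n$, the Moreau--Rockafellar sum rule gives $\partial\phi_1(x) = \nabla f_1(x) + \partial g_1(x)$, so $\nabla f_1(x) + \eta \in \partial\phi_1(x)$, and such $\eta$ exist because $x \in \dom(\partial\phi_1)$. The crucial inequality comes from monotonicity of the convex subdifferential $\partial g_1$ applied to the pair $w \in \partial g_1(p)$ and $\eta \in \partial g_1(x)$, namely $\langle w - \eta,\ p - x\rangle \ge 0$. Substituting $p - x = -t\bigl(\nabla f_1(x) + w\bigr)$ and dividing by $t > 0$ yields $\langle (\nabla f_1(x)+w) - (\nabla f_1(x)+\eta),\ \nabla f_1(x)+w\rangle \le 0$.

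Writing $a := \nabla f_1(x) + w$ and $b := \nabla f_1(x) + \eta$, this is exactly $\langle a - b,\ a\rangle \le 0$, so $\|a\|^2 \le \langle b, a\rangle \le \|b\|\,\|a\|$ by Cauchy--Schwarz, whence $\|a\| \le \|b\|$. Multiplying by $t$ gives $\|x - T_t(x)\| = t\|a\| \le t\|b\| = t\|\nabla f_1(x) + \eta\|$. Since $\eta \in \partial g_1(x)$ was arbitrary, taking the infimum over $\eta$ and using $\partial\phi_1(x) = \nabla f_1(x) + \partial g_1(x)$ produces $\|x - T_t(x)\| \le t\,\inf_{\xi \in \partial\phi_1(x)}\|\xi\| = t\,d(0, \partial\phi_1(x))$, which is the claim (here I read $\varphi_1$ as $\phi_1 = f_1 + g_1$).

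The argument is short and the only delicate points are bookkeeping rather than genuine obstacles: the validity of the subdifferential sum rule, which holds because $f_1$ is finite and differentiable everywhere, and the nonemptiness of $\partial\phi_1(x)$, guaranteed by the hypothesis $x\in\dom(\partial\phi_1)$ (if it were empty the right-hand side would be $+\infty$ and the bound trivial). It is worth remarking that the step-size restriction $t \le 1/L_{f_1}$ is not actually used in establishing this particular inequality; it is inherited only from the standing requirement that makes $T_t$ nonexpansive. The real content is the passage from a subgradient $w$ evaluated at the \emph{image} point $p$ to a subgradient $\eta$ at the \emph{base} point $x$, which is precisely what monotonicity of $\partial g_1$ supplies.
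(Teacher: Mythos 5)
Your proof is correct. It is, however, packaged differently from the paper's. The paper's key move is to observe that for any $z\in\partial\phi_1(x)$ the point $x$ is itself a proximal image: since $z-\nabla f_1(x)\in\partial g_1(x)$, one has $\prox_{tg_1}\bigl((x+tz)-t\nabla f_1(x)\bigr)=x$, so comparing this with $T_t(x)=\prox_{tg_1}\bigl(x-t\nabla f_1(x)\bigr)$ and invoking non-expansiveness of $\prox_{tg_1}$ (whose two arguments differ exactly by $tz$) gives $\|x-T_t(x)\|\le t\|z\|$ in one line; taking $z$ of minimal norm finishes. You never construct this auxiliary pre-image of $x$; instead you extract the subgradient $w\in\partial g_1(T_t(x))$ from the resolvent inclusion at the image point, play it against an arbitrary $\eta\in\partial g_1(x)$ via monotonicity of $\partial g_1$, and close with Cauchy--Schwarz. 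The two arguments are two faces of the same fact — non-expansiveness of the resolvent is itself proved by exactly the monotonicity-plus-Cauchy--Schwarz computation you perform — so yours is essentially an unpacked, self-contained version: it avoids citing prox non-expansiveness as a black box, at the cost of a few more lines of bookkeeping, while the paper's version is shorter given that standard fact. Both proofs share the feature you correctly flag: the restriction $t\le 1/L_{f_1}$ plays no role in the inequality itself (the prox is non-expansive, and $\partial g_1$ monotone, for every $t>0$); it is only carried along from the standing hypotheses on $T_t$.
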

\begin{proof}
For every $x\in\dom(\partial\phi_2)$ we have $\partial {\varphi _2}(x) = \nabla {f_2}(x) + \partial {g_2}(x)$. For given $t\in(0,\frac{1}{L_{f_2}}]$ and $z\in\partial\phi_2(x)$ one has 
\[(x - t\nabla {f_2}(x)) + tz \in x + t\partial {g_2}(x) = (I + t\partial {g_2})(x),\]
 we must then have \[{(I + t\partial {g_2})^{ - 1}}((x - t\nabla {f_2}(x)) + tz) = x,\]
or equivalently, $T_t(x+tz)=\prox_{tg_2}{(x+tz - t\nabla {f_2}(x))}= x.$ Since the proximal mapping is non-expansive, we deduce that 
\[\|x - {T_t}(x)\| = \|{T_t}(x + tz) - {T_t}(x)\| \le t\|z\|,\,\,\,\,\,\,\,\,\,\,\forall z \in \partial {\varphi _2}(x).\]
Letting $z$ be the minimal norm element of $\partial {\varphi _2}(x)$, we derived the claimed inequality $\|x - {T_t}(x)\| \le td(0,\partial {\varphi _1}(x))$.
\end{proof}
We shall now study cases wherein $\delta$  is not finite. From now on, we use $\Omega:=VI(T,\Fix(W))$ and assume it is non-empty. % IS IT ASSUMED OR A FACT?

\begin{thm}
\label{thm:deltainfty}
Assume $\widetilde{\delta}=\infty$, together with Assumptions \ref{ass:ii}, \ref{ass:iv}, and \ref{ass:v}. Assume also that $\{x^k\}$ is bounded. Moreover, if the following assumptions hold \\
\textcolor{black}{$(A_2)$} $W$ is boundedly linearly regular,\\
\textcolor{black}{$(A_3)$} $\mathop {\lim \sup }\limits_k \frac{{{\beta _k}^2}}{{{\alpha _k}}}=0$,\\
then the sequence $\{x^k\}$ converges to $x^*$, the unique solution of
\begin{equation}
    \begin{array}{l}
 \left\langle {{x^*} - S({x^*}),x - {x^*}} \right\rangle  \ge 0,\,\,\,\,\,\,\,\forall x \in \Omega. \\ 
  \end{array}
\end{equation}
 Furthermore, this implies that $x^*$ minimizes $\omega$ over $\Omega$, i.e., $$\mathop {\min }\limits_{x \in \Omega} \omega (x) = \omega ({x^*}).$$ 
\end{thm}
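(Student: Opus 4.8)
The plan is to run the same Banach/Xu contraction argument as in Theorem~\ref{thm:deltazero}, but with the target set $\Fix(W)$ replaced by $\Omega = VI(T,\Fix(W))$, and to tame the extra term generated by the divergence $\beta_k/\alpha_k\to\infty$ using precisely the two new hypotheses $(A_1)$ and $(A_2)$. First I would settle existence and uniqueness of the limit candidate $x^*$. Observe that $\Omega=\Fix\!\big(P_{\Fix(W)}T\big)$, the fixed-point set of a non-expansive map, hence closed and convex, and nonempty by hypothesis; since $S$ is an $r$-contraction, $P_\Omega S$ is an $r$-contraction on $\RR^n$, whose unique fixed point $x^*$ solves $VI(S,\Omega)$, i.e.\ $\langle S(x^*)-x^*,x-x^*\rangle\le 0$ for all $x\in\Omega$. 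I would also record that $\Omega\subseteq\Fix(W)$, so $x^*\in\Fix(W)$, and recall from Lemma~\ref{lem:l3} that under the present assumptions every cluster point of $\{x^k\}$ lies in $\Omega$.

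Next I would reuse the one-step estimate \eqref{eq:A5} (valid for every $x\in\Fix(W)$) with the choice $x=x^*$. Setting $a_k=\|x^k-x^*\|^2$, $\gamma_k=(1-r)\alpha_k$, and
\[
\delta_k = 2\alpha_k\langle S(x^*)-x^*, x^{k+1}-x^*\rangle + 2(1-\alpha_k)\beta_k\langle T(x^*)-x^*, x^{k+1}-x^*\rangle,
\]
the problem reduces, via Lemma~\ref{lemma:L1} (whose conditions $\gamma_k\in(0,1)$ and $\sum\gamma_k=\infty$ follow from Assumption~\ref{ass:ii}), to proving $\limsup_k \delta_k/\gamma_k\le 0$. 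I would split $\delta_k/\gamma_k$ into two pieces. The first, $\tfrac{2}{1-r}\langle S(x^*)-x^*, x^{k+1}-x^*\rangle$, is handled exactly as in \eqref{eq:A2}: along a subsequence realizing the limsup and converging to a cluster point $\bar{x}$, Lemma~\ref{lem:l3} gives $\bar{x}\in\Omega$, and the $VI(S,\Omega)$ characterization of $x^*$ yields $\langle S(x^*)-x^*,\bar{x}-x^*\rangle\le 0$.

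The second piece, $\tfrac{2(1-\alpha_k)}{1-r}\,\tfrac{\beta_k}{\alpha_k}\langle T(x^*)-x^*, x^{k+1}-x^*\rangle$, is the crux, and I expect it to be the main obstacle precisely because the coefficient $\beta_k/\alpha_k$ diverges. Here I would project: writing $p^{k+1}=P_{\Fix(W)}(x^{k+1})$, decompose $\langle T(x^*)-x^*,x^{k+1}-x^*\rangle = \langle T(x^*)-x^*, x^{k+1}-p^{k+1}\rangle + \langle T(x^*)-x^*, p^{k+1}-x^*\rangle$. The second summand is $\le 0$ since $p^{k+1}\in\Fix(W)$ and $x^*\in VI(T,\Fix(W))$, while the first is at most $\|T(x^*)-x^*\|\,d(x^{k+1},\Fix(W))$. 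To control this distance at the correct rate, I would invoke $(A_1)$ on a ball containing the bounded iterates to get $d(x^{k+1},\Fix(W))\le \theta\|x^{k+1}-W(x^{k+1})\|$; then, combining the displayed computation from the proof of Lemma~\ref{lem:l2}(c), the non-expansiveness of $W$, and the estimate $\|x^{k+1}-x^k\|=o(\beta_k)$ of Lemma~\ref{lem:l2}(b), the $\beta_k T(x^k)$ and $(\alpha_k+\beta_k)W(x^k)$ contributions dominate (because $\beta_k/\alpha_k\to\infty$), giving $\|x^{k+1}-W(x^{k+1})\|=O(\beta_k)$.

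Consequently $\langle T(x^*)-x^*, x^{k+1}-x^*\rangle\le C\beta_k$ for a constant $C$, so the second piece is $O(\beta_k^2/\alpha_k)$, whose limsup vanishes by $(A_2)$. Together with the first piece this gives $\limsup_k \delta_k/\gamma_k\le 0$, so Lemma~\ref{lemma:L1} forces $a_k\to 0$, i.e.\ $x^k\to x^*$. Finally, the minimization statement $\min_{x\in\Omega}\omega(x)=\omega(x^*)$ follows from the $VI(S,\Omega)$ characterization of $x^*$ together with the strong convexity of $\omega$, exactly as at the end of Theorem~\ref{thm:deltazero}.
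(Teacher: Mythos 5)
Your proposal is correct and follows essentially the same route as the paper's own proof: the same projection decomposition $\langle T(x^*)-x^*,x^{k+1}-x^*\rangle=\langle T(x^*)-x^*,x^{k+1}-P_{\Fix(W)}(x^{k+1})\rangle+\langle T(x^*)-x^*,P_{\Fix(W)}(x^{k+1})-x^*\rangle$ with the second term killed by $x^*\in VI(T,\Fix(W))$, the same use of $(A_1)$ to bound $d(x^{k+1},\Fix(W))$ by $\theta\|x^{k+1}-W(x^{k+1})\|$, the same estimate $\|x^{k+1}-W(x^{k+1})\|\le(2\alpha_k+2\beta_k+\alpha_k\beta_k)C+\|x^{k+1}-x^k\|$, and the same conclusion via $(A_2)$, Lemma \ref{lem:l2}(b), Lemma \ref{lem:l3}, inequality \eqref{eq:A5}, and Lemma \ref{lemma:L1}. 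The only cosmetic difference is your explicit identification $\Omega=\Fix(P_{\Fix(W)}T)$ to justify closedness and convexity, which the paper leaves implicit.
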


\begin{proof}
Since $\Omega$ is closed and convex and $S$ is a contraction, there exists  $x^*\in\Omega$, which is a unique fixed point of the projection map $P_{\Omega}S(x^*)=x^*$, i.e.,
 \begin{equation}\label{eq:BVI}
     \left\langle {{x^*} - S({x^*}),x - {x^*}} \right\rangle  \ge 0,\,\,\,\,\,\,\,\forall x \in \Omega .
 \end{equation}
To deduce $x^k\to x^*$, we first note that  $x^*\in \Omega:=VI(T,\Fix(W))$, which implies that 
\[\left\langle {(T - I){x^*},x - {x^*}} \right\rangle\le0 \,\,\,\,\,\,\,\,\,\,\forall x \in {\Fix}(W),\]
and since $P_{\Fix(W)}(x^k)\in \Fix(W)$, one gets
 \begin{equation}\label{eq:PVI}
     \,\,\,\,\left\langle {(T - I){x^*},{P_{\Fix(W)}}({x^{k + 1}}) - {x^*}} \right\rangle\le 0.
 \end{equation}
 On the other hand, since $\{x^k\}$ is bounded, there exists $M>0$ and $k_0\ge 0$  such that for all $k\ge k_0$, one has $x^k\in\mathcal{B}(0,M)$. So, by applying Assumption $(A_1)$, one can easily observe that there exists $\theta>0$ such that
 \begin{equation}\label{eq:dVI}
 d({x^k},\Fix(W)) \le \theta \|{x^k} - W({x^k})\|.
 \end{equation}
 Employing \eqref{eq:PVI} and \eqref{eq:dVI}, one has
\begin{align*}
    \left\langle {(T - I){x^*},{x^{k + 1}} - {x^*}} \right\rangle \,\,\, &=\left\langle {(T - I){x^*},{x^{k + 1}} - {P_{\Fix(W)}}({x^{k + 1}}) + {P_{\Fix(W)}}({x^{k + 1}}) - {x^*}} \right\rangle\\
    &=\left\langle {(T - I){x^*},{x^{k + 1}} - {P_{\Fix(W)}}({x^{k + 1}})} \right\rangle \\
    &+ \left\langle {(T - I){x^*},{P_{\Fix(W)}}({x^{k + 1}}) - {x^*}} \right\rangle\\ 
    &\le \left\langle {(T - I){x^*},{x^{k + 1}} - {P_{\Fix(W)}}({x^{k + 1}})} \right\rangle\\
    &\le \|T({x^*}) - {x^*}\|\,{\mkern 1mu} \|{x^{k + 1}} - {P_{\Fix(W)}}({x^{k + 1}})\|\\
    &= \|T({x^*}) - {x^*}\|{\mkern 1mu} d\left( {{x^{k + 1}},\Fix(W)} \right)\\
    &\le \theta \|T({x^*}) - {x^*}\|{\mkern 1mu} \|{x^{k + 1}} - W(x^{k+1})\|.
\end{align*}

Hence
\begin{equation}\label{eq:C1}
     \left\langle {(T - I){x^*},{x^{k + 1}} - {x^*}} \right\rangle\le \theta \|T({x^*}) - {x^*}\|{\mkern 1mu} \|{x^{k + 1}} - W(x^{k+1})\|.
\end{equation}
Now, since $\{x^k\}$ is bounded, one can find a constant $C> 0$ so that

\[C \ge \mathop {\sup }\limits_{k \ge 1} \left\{ {\|S({x^k})\|,\|T({x^k})\|,\|W({x^k})\|} \right\},\]
and we will then have 
 \begin{align*}
     \|{x^{k + 1}} - W{x^{k + 1}}\| &\le\|x^{k+1}-W(x^k)\|+\|W(x^{k+1})-W(x^k)\|\\ 
     &\le \|{x^{k + 1}} - W{x^k}\| + \|{x^{k + 1}} - {x^k}\|\\
     &
  \le {\alpha _k}\|S({x^k})\| + {\beta _k}\|T({x^k})\| + ({\alpha _k} + {\beta _k} + {\alpha _k}{\beta _k})\|W({x^k})\| + \|{x^{k + 1}} - {x^k}\|\\ 
  &\le (2{\alpha _k} + 2{\beta _k} + {\alpha _k}{\beta _k})C + \|{x^{k + 1}} - {x^k}\|.
 \end{align*}
 Therefore, by combining the previous inequality and  \eqref{eq:C1}, we get:
 
 \begin{align}
 \left\langle {(T - I){x^*},{x^{k + 1}} - {x^*}} \right\rangle &\nonumber\le \theta \|T({x^*}) - {x^*}\|\left[ {(2{\alpha _k} + 2{\beta _k} + {\alpha _k}{\beta _k})C + \|{x^{k + 1}} - {x^k}\|} \right].\\ 
  &\label{eq:C2}
 \end{align}
 Now, multiplication \eqref{eq:C2} with $\frac{\beta_k}{\alpha_k}$ yields 
 \begin{align}
 \frac{{{\beta _k}}}{{{\alpha _k}}}\left\langle {(T - I){x^*},{x^{k + 1}} - {x^*}} \right\rangle &\label{eq:holder}\le \theta \|T({x^*}) - {x^*}\|\left[ {\left(2{\beta _k} + 2\frac{{\beta _k^2}}{{{\alpha _k}}} + {\beta _k^2}\right)C + \frac{{{\beta _k^2}}}{{{\alpha _k}}}.\frac{{\|{x^{k + 1}} - {x^k}\|}}{{{\beta _k}}}} \right]. 
 \end{align}
  Using \eqref{eq:holder}, Assumption $(A_4)$, and part (b) of Lemma \ref{lem:l2}, we will observe that 
 \begin{equation}\label{eq:C3}
    \mathop {\lim \sup }\limits_k \frac{{{\beta _k}}}{{{\alpha _k}}}\left\langle {(T - I){x^*},{x^{k + 1}} - {x^*}} \right\rangle  \le 0.\,
 \end{equation}
 Moreover by Lemma \ref{lem:l3}, we have $w(\{x^k\}
 )\subseteq \Omega$. Now, since $\{x^{k+1}\}$ is bounded there exists a convergent sub-sequence $\{x^{k_i}\}$ of $\{x^{k+1}\}$ to $x^{'}\in\Omega$. From \eqref{eq:BVI}, it can be seen that
 \begin{align}
   \mathop {\lim \sup }\limits_k \left\langle {(S - I){x^*},{x^{k + 1}} - {x^*}} \right\rangle & \nonumber=\mathop {\lim }\limits_k \left\langle {(S - I){x^*},{x^{{k_i}}} - {x^*}} \right\rangle \\
   &\label{eq:C4}=\left\langle {(S - I){x^*},x^{'} - {x^*}} \right\rangle  \le 0. 
 \end{align}
 Recall that we still have inequality \eqref{eq:A5}. By a similar argument as in Remark \ref{rem:deltazero }, from \eqref{eq:C3} and \eqref{eq:C4} and in view of Lemma \ref{lemma:L1}, we see that $x^k\to x^*$ and the proof is complete.
\end{proof}
 
 Notice that by Lemma \ref{lem:l3}, we know that when $\widetilde{\delta}=\infty$, then $w(\{x^k\})\subseteq\Omega$. The following example shows that this is not a necessary condition.
\begin{example}
Take the choices 
\[S(x) = \frac{x}{4},\,\,\,\,\,T(x) = \left\{ \begin{array}{lr}
 x & x \in [ - 1,1], \\ 
 1 & x \ge 1, \\ 
  - 1 & x \le  - 1, \\ 
 \end{array} \right.,\,\,\,\,\,\,\,\,\,\,\,W(x) = \left\{ \begin{array}{lr}
 x \,\,\,\,\,& x \in [0,2], \\ 
 2 & x \ge 2, \\ 
 0 & x \le 0. \\ 
 \end{array} \right.\]
 Also, consider $\alpha_k=\frac{1}{k}$ and $\beta_k=\frac{1}{k^2}$. It can be seen that $\delta=0$ and $\Omega=VI(T,\Fix(W))=[0,1]$ and also $w(\{x^k\})=\{0\}\subset\Omega$.
\end{example}

The following result asserts the existence of a limit point satisfying a variational inequality under a mild assumption related to the preceding theorem without any condition on $\delta$. 
 
 \begin{prop}\label{prop:deltainfinty}
Assume $\delta<\infty$, together with Assumptions \ref{ass:ii}, \ref{ass:iv}, \ref{ass:v}, and $(A_2)$. Moreover, assume that Assumption \ref{ass:trilevelassumption} holds for $\Omega$ in replace of $X^*$. Then the sequence $\{x^k\}$ converges to  $x^*$, which is the unique solution of
\begin{equation}
    \begin{array}{l}
 \left\langle {{x^*} - S({x^*}),x - {x^*}} \right\rangle  \ge 0,\,\,\,\,\,\,\,\forall x \in \Omega. \\ 
  \end{array}
\end{equation} 
\end{prop}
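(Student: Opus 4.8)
The plan is to run the argument of Theorem~\ref{thm:deltainfty} almost verbatim, exploiting the fact that the present hypotheses make two of its ingredients either free or superfluous. First I would note that since $\delta\in[0,+\infty)$, Lemma~\ref{lem:boundedness} applies directly and yields boundedness of $\{x^k\}$ at no extra cost; this is the one place where the \emph{assumed} boundedness of Theorem~\ref{thm:deltainfty} is replaced by something we can prove. Next, because $\Omega=VI(T,\Fix(W))$ is closed and convex and $S$ is an $r$-contraction, the map $P_\Omega S$ is a contraction and hence has a unique fixed point $x^*$, which is precisely the unique solution of the asserted variational inequality \eqref{eq:BVI}. This already secures existence and uniqueness; the remaining work is convergence $x^k\to x^*$.

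With $x^*$ in hand I would reuse the estimate chain of Theorem~\ref{thm:deltainfty}. Since $x^*\in\Omega$ and $P_{\Fix(W)}(x^{k+1})\in\Fix(W)$, inequality \eqref{eq:PVI} holds, and combining it with the bounded linear regularity of $W$ (Assumption $(A_1)$, via \eqref{eq:dVI}) produces \eqref{eq:C1}, i.e.\ $\langle (T-I)x^*,x^{k+1}-x^*\rangle\le\theta\|T(x^*)-x^*\|\,\|x^{k+1}-W(x^{k+1})\|$. The same elementary bound as before gives $\|x^{k+1}-W(x^{k+1})\|\le(2\alpha_k+2\beta_k+\alpha_k\beta_k)C+\|x^{k+1}-x^k\|$.

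The crucial simplification is in establishing the analogue of \eqref{eq:C3}. In Theorem~\ref{thm:deltainfty} the ratio $\beta_k/\alpha_k$ diverges, so controlling $\tfrac{\beta_k}{\alpha_k}\|x^{k+1}-W(x^{k+1})\|$ forced the use of Assumption $(A_2)$ together with part~(b) of Lemma~\ref{lem:l2}. Here, by contrast, $\limsup_k\beta_k/\alpha_k=\delta<\infty$, so the ratio is bounded; since $\alpha_k\to0$, $\beta_k\to0$ (because $\beta_k\le(\delta+\epsilon)\alpha_k$ eventually), and $\|x^{k+1}-x^k\|\to0$ by part~(a) of Lemma~\ref{lem:l2} (Assumption~\ref{ass:v}), we obtain $\|x^{k+1}-W(x^{k+1})\|\to0$ and therefore $\limsup_k\tfrac{\beta_k}{\alpha_k}\langle(T-I)x^*,x^{k+1}-x^*\rangle\le0$. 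Writing $\tfrac{\beta_k}{\alpha_k}(2\alpha_k+2\beta_k+\alpha_k\beta_k)=2\beta_k+2\tfrac{\beta_k^2}{\alpha_k}+\beta_k^2$ and noting $\tfrac{\beta_k^2}{\alpha_k}=\beta_k\cdot\tfrac{\beta_k}{\alpha_k}\to0$ confirms that no separate $(A_2)$ is needed. Separately, the hypothesis $w(\{x^k\})\subseteq\Omega$ replaces the appeal to Lemma~\ref{lem:l3} that was available only when $\widetilde{\delta}=\infty$: extracting a convergent subsequence $x^{k_i}\to x'\in\Omega$ and invoking \eqref{eq:BVI} yields $\limsup_k\langle(S-I)x^*,x^{k+1}-x^*\rangle\le0$, the analogue of \eqref{eq:C4}.

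Finally I would close exactly as in Theorem~\ref{thm:deltazero}: inequality \eqref{eq:A5} with $x=x^*\in\Omega\subseteq\Fix(W)$ gives a recursion $a_{k+1}\le(1-\gamma_k)a_k+\delta_k$ with $a_k=\|x^k-x^*\|^2$, $\gamma_k=(1-r)\alpha_k$ and $\delta_k$ as in \eqref{alphabetaoptimal}. Assumption~\ref{ass:ii} gives $\sum_k\gamma_k=\infty$, while the two estimates just described give $\limsup_k\delta_k/\gamma_k\le0$, so Lemma~\ref{lemma:L1} forces $a_k\to0$, i.e.\ $x^k\to x^*$. I expect the only genuinely delicate point to be the bookkeeping in the $\limsup$ estimate of the third paragraph — verifying that mere boundedness of $\beta_k/\alpha_k$ (rather than $(A_2)$) suffices — together with the conceptual observation that, since $\widetilde{\delta}$ is not assumed infinite here, Lemma~\ref{lem:l3} is unavailable and the directly-imposed inclusion $w(\{x^k\})\subseteq\Omega$ is exactly what legitimately substitutes for it.
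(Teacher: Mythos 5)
Your proposal is correct and follows essentially the same route as the paper: the paper's own proof is literally the single line ``This is immediate from Theorem \ref{thm:deltainfty},'' and what you have written is precisely the adaptation that one-liner leaves implicit. Your three adjustments --- deriving boundedness from Lemma \ref{lem:boundedness} since $\delta\in[0,+\infty)$, observing that boundedness of $\beta_k/\alpha_k$ (together with $\alpha_k\to 0$, hence $\beta_k\to 0$) makes $(A_2)$ superfluous, and letting the hypothesis $w(\{x^k\})\subseteq\Omega$ stand in for Lemma \ref{lem:l3} --- are exactly the right bookkeeping, so your writeup is in fact more complete than the paper's.
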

 
 \begin{proof}
This is immediate from Theorem \eqref{thm:deltainfty}.
\end{proof}
The following fact provides the limit of distance between $\{x^k\}$ and $\Fix(W)$ and $X^*$, respectively.
\begin{fact}\label{fact:distance}
 Let Assumption \ref{ass:v} hold and  $h_k=d(x^k,X^*)$. Also suppose that $\{x^k\}$ is bounded. Then the following assertion holds\\
 a) If $h_k\ne0$ then $\mathop {\lim }\limits_{k \to \infty } \frac{{{{(h_{k + 1}^2 - h_k^2)}^ + }}}{{h_{k+1}}} = 0{\mkern 1mu}$.\\
 b) $\mathop {\lim }\limits_{k \to \infty } d({x^k},\Fix(W)) = 0.$
\end{fact}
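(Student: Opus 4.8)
The plan is to treat the two parts independently, each reducing to one of the parts of Lemma~\ref{lem:l2} after a short elementary manipulation. Note first that, throughout, both distance functions $d(\cdot,X^*)$ and $d(\cdot,\Fix(W))$ are well defined since $X^*\neq\emptyset$ and $\Fix(W)=Y^*\neq\emptyset$ by Assumption~\ref{s:assumptiontri}, and that Assumption~\ref{ass:v} together with the boundedness of $\{x^k\}$ lets me apply parts (a) and (c) of Lemma~\ref{lem:l2}.

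For part (a), I would begin with the algebraic identity for nonnegative reals: since $h_{k+1}+h_k\ge 0$, the sign of $h_{k+1}^2-h_k^2$ agrees with that of $h_{k+1}-h_k$, so
\[
(h_{k+1}^2-h_k^2)^+ = (h_{k+1}-h_k)^+\,(h_{k+1}+h_k).
\]
On the index set where the left-hand side is strictly positive one has $0\le h_k\le h_{k+1}$, whence $h_{k+1}+h_k\le 2h_{k+1}$; dividing by $h_{k+1}$ (which is nonzero there, and nonzero by the hypothesis $h_k\neq 0$ otherwise) yields the uniform bound $\tfrac{(h_{k+1}^2-h_k^2)^+}{h_{k+1}}\le 2(h_{k+1}-h_k)^+$. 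I then invoke the $1$-Lipschitz property of the distance function to the nonempty set $X^*$, giving $(h_{k+1}-h_k)^+\le|h_{k+1}-h_k|=|d(x^{k+1},X^*)-d(x^k,X^*)|\le\|x^{k+1}-x^k\|$. Combining these, $\tfrac{(h_{k+1}^2-h_k^2)^+}{h_{k+1}}\le 2\|x^{k+1}-x^k\|$, and the right-hand side tends to $0$ by part (a) of Lemma~\ref{lem:l2}, which closes the argument.

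For part (b), I would argue by contradiction using compactness. Part (c) of Lemma~\ref{lem:l2} gives $w(\{x^k\})\subseteq\Fix(W)$, and $\Fix(W)$ is closed. If $d(x^k,\Fix(W))$ did not tend to $0$, there would exist $\varepsilon>0$ and a subsequence $\{x^{k_i}\}$ with $d(x^{k_i},\Fix(W))\ge\varepsilon$ for all $i$. Since $\{x^k\}$ is bounded, $\{x^{k_i}\}$ admits a further convergent subsequence with limit $x'\in w(\{x^k\})\subseteq\Fix(W)$; but then $d(x^{k_{i_j}},\Fix(W))\le\|x^{k_{i_j}}-x'\|\to 0$, contradicting the lower bound $\varepsilon$. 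Hence $d(x^k,\Fix(W))\to 0$.

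Both parts are short once Lemma~\ref{lem:l2} is available, so there is no serious obstacle; the only step requiring care is the positive-part identity in (a) together with the observation that $h_{k+1}+h_k\le 2h_{k+1}$ precisely on the set where the numerator is positive, which is what permits passing from the squared distances to the increment $\|x^{k+1}-x^k\|$ controlled by Lemma~\ref{lem:l2}(a).
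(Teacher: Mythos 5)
Your proof is correct, and part (a) takes a genuinely different route from the paper's. The paper works with the projection operator: it writes $\tfrac12 h_k^2=\tfrac12\|x^k-P_{X^*}(x^k)\|^2$, expands the square after inserting $x^{k+1}-P_{X^*}(x^{k+1})$, and invokes the variational characterization of the projection (the inner product $\left\langle P_{X^*}(x^{k+1})-P_{X^*}(x^k),\,x^{k+1}-P_{X^*}(x^{k+1})\right\rangle\ge 0$, since $P_{X^*}(x^k)\in X^*$) to arrive at $\tfrac12 h_{k+1}^2-\tfrac12 h_k^2\le\|x^{k+1}-x^k\|\,h_{k+1}$. You avoid projections entirely: the factorization $(h_{k+1}^2-h_k^2)^+=(h_{k+1}-h_k)^+(h_{k+1}+h_k)$, the bound $h_{k+1}+h_k\le 2h_{k+1}$ on the set where the numerator is positive, and the $1$-Lipschitz property of $d(\cdot,X^*)$ give the same final estimate
\[
\frac{(h_{k+1}^2-h_k^2)^+}{h_{k+1}}\le 2\|x^{k+1}-x^k\|,
\]
after which both arguments conclude identically via Lemma \ref{lem:l2}(a). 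Your version is more elementary and arguably more transparent, since it replaces the Hilbert-space projection inequality with the triangle inequality for the distance function; the paper's version generalizes more directly to settings where one wants explicit control through the projected points themselves. For part (b), the paper merely asserts the claim is ``straightforward from the boundedness of $\{x^k\}$,'' and your compactness argument via Lemma \ref{lem:l2}(c) is exactly the missing justification: any subsequence bounded away from $\Fix(W)$ would have a cluster point in $w(\{x^k\})\subseteq\Fix(W)$, a contradiction. Both parts stand, with the same hypotheses (Assumption \ref{ass:v} plus boundedness) as the paper uses.
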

 
 \begin{proof}
We just prove the first assertion. (The second is straightforward from the boundedness of $\{x^k\}$.) The proof relies on the study of the sequence $\{h_k\}$.
Since $P_{X^*}$ is the projection operator onto the convex set $X^*$, we have 
\begin{align*}
\frac{1}{2}h_k^2 &= \frac{1}{2}\|{x^k} - {P_{X^*}}({x^k})\|^2\\
 &= \frac{1}{2}\|({x^k} - {P_{X^*}}({x^k})) - ({x^{k + 1}} - {P_{X^*}}({x^{k + 1}})) + ({x^{k + 1}} - {P_{X^*}}({x^{k + 1}}))\|^2\\
 &= \,\frac{1}{2}\,\|({x^k} - {P_{X^*}}({x^k})) - ({x^{k + 1}} - {P_{X^*}}({x^{k + 1}}))\|^2 + \frac{1}{2}\|{x^{k + 1}} - {P_{X^*}}({x^{k + 1}})\|^2\\
 &+\left\langle {\left. {({x^k} - {P_{X^*}}({x^k})) - ({x^{k + 1}} - {P_{X^*}}({x^{k + 1}})),{x^{k + 1}} - {P_{X^*}}({x^{k + 1}})} \right\rangle } \right.\\
 &\ge\frac{1}{2}h_{k+1}^2 + \left\langle {\left. {{x^k} - {x^{k + 1}},{x^{k + 1}} - {P_{X^*}}({x^{k + 1}})} \right\rangle } \right.\\
 &+\left\langle {\left. {{P_{X^*}}({x^{k + 1}}) - {P_{X^*}}({x^k}),{x^{k + 1}} - {P_{X^*}}({x^{k + 1}})} \right\rangle } .\right.
 \end{align*}
 Now, $P_{X^*}(x^k)\in X^*$ and consequently
 \[\left\langle {\left. {{P_{X^*}}({x^{k + 1}}) - {P_{X^*}}({x^k}),{x^{k + 1}} - {P_{X^*}}({x^{k + 1}})} \right\rangle }\ge 0. \right.\]
 Therefore
 \begin{align}
      \frac{1}{2}h_{k+1}^2-\frac{1}{2}h_k^2&\nonumber\le
      \left\langle {\left.{x^{k + 1}}-{{x^k},{x^{k + 1}} - {P_{X^*}}({x^{k + 1}})} \right\rangle } \right.\\
      &\nonumber\le \left\lvert {\left\langle {{x^{k + 1}}-{x^k},{x^{k + 1}} - {P_{{X^*}}}({x^{k + 1}})} \right\rangle } \right\rvert\\
&\nonumber\le\left\|{x^{k + 1}}-{x^k} \right\|\, \left\|{x^{k + 1}} - {P_{X^*}}({x^{k + 1}})\right\|\\
&\label{eq:distance}= \left\|{x^{k + 1}}-{x^k}\right\|h_{k+1}.
 \end{align}
 Since $h_{k+1}\ne0$ the last inequality follows that 
 \[0 \le \frac{{{{(h_{k + 1}^2 - h_k^2)}^ + }}}{{h_{k+1}}} \le 2\left\|{x^k} - {x^{k + 1}}\right\|.\]
  Finally, the proof is completed by part (a) of Lemma \ref{lem:l2}.
 \end{proof}

\begin{rem}
We would also like to point out that if in Theorems \ref{thm:deltainfty} and Proposition \ref{prop:deltainfinty} we had  $\Omega:=VI(T,\Fix(W))=X^*$ then the trilevel optimization problem \eqref{prob:tri-levelintro} would have a solution.  
\end{rem}

\begin{rem}
Our assumptions on $\alpha_k$ and $\beta_k$ are weaker that of assumptions \cite{mainge2007strong}, \cite{moudafi2007krasnoselski}. For instance, consider that  $\mathop {\lim }\limits_k \frac{{{\beta _k}}}{{{\alpha _k}}}$ need not exist. Instead, we consider $\mathop {\lim }\limits_k \sup \frac{{{\beta _k}}}{{{\alpha _k}}}$. See Example~\ref{withoutlimit} below, where there is no limit $\frac{{{\beta _k}}}{{{\alpha _k}}}$, but our results still apply.
\end{rem}
\begin{example}\label{exam:delta}
\label{withoutlimit}
Let ${\beta _k} = \frac{1}{{(k+1)(3 + {{( - 1)}^k})}},\,\,\,\,{\alpha _k} = \frac{1}{{(k+1)(2 + {{( - 1)}^k})}}$
clearly $\mathop {\lim }\limits_k \frac{{{\beta _k}}}{{{\alpha _k}}}$ does not exist, but $\mathop {\lim \sup }\limits_k \frac{{{\beta _k}}}{{{\alpha _k}}} = \frac{3}{4}$ and also 
\[\,\sum\limits_{k = 1}^\infty  {{\alpha _k}}  = \infty ,\,\,\,{\alpha _k} \to 0.\]
\end{example}
Now, we are ready to give an example related to the step-sizes $\alpha_k$ and $\beta_k$  that guarantee the convergence $\{x^k\}$  of all of our results. Note that in all cases $\mathop {\lim }\limits_k \frac{{{\beta _k}}}{{{\alpha _k}}}$ may not exist. 
\begin{example} \label{ex:table}
Consider ${\alpha _k} = \frac{1}{{{(k+1)^\lambda (2+(-1)^k)}}}$ and ${\beta _k} = \frac{1}{{{(k+1)^\gamma(3+(-1)^k) }}}$    with $\lambda>0,~\gamma>0$. Now for large sufficient $k$, we have the following estimation \[\lvert{\alpha _{k + 1}} - {\alpha _k}\rvert\approx \frac{1}{{{(k+1)^\lambda }}},\,\,\,\,\,\,\lvert{\beta _{k + 1}} - {\beta _k}\rvert \approx \frac{1}{{{(k+1)^\gamma }}}.\,\] 
It is easy to check that
\[\delta=\mathop {\lim \sup }\limits_k \frac{{{\beta _k}}}{{{\alpha _k}}} = \left\{ \begin{array}{l}
 0\,\,\,\,\,\,\,\,\,\lambda  < \gamma , \\ 
 1\,\,\,\,\,\,\,\,\,\lambda  = \,\gamma ,\, \\ 
  + \infty \,\,\,\lambda  > \,\gamma \,.\, \\ 
 \end{array} \right.\]
Furthermore,\\
 \begin{itemize}
 \item  Assumption \ref{ass:ii} holds when $0<\lambda\le 1$.\\
 %\item  Assumption \ref{ass:iii} holds when $\lambda>0$ and $\gamma>0$.\\
  \item Assumption \ref{ass:iv} holds when $0<\lambda+\gamma\le 1$.\\
 \item  Assumption \ref{ass:v} holds when $0<\lambda, \gamma<1$.\\
  \item  Assumption $(A_4)$ holds when $\lambda\le 2\gamma$.
  \end{itemize}
  Remark that for the case $\widetilde{\delta}=\infty$, it is sufficient to consider ${\alpha _k} = \frac{1}{{{{(k + 1)}^\lambda }}}$ and ${\beta _k} = \frac{1}{{{{(k + 1)}^\gamma }}}$ with $\mathop {\lim }\limits_k \frac{{{\beta _k}}}{{{\alpha _k}}} = \infty$ for $\lambda>\gamma$.
  Now we are ready to present a taxonomy of assumptions with respect to $\alpha_k$ and $\beta_k$, as referenced in Table 2
  \begin{itemize}
      \item (a): $\lambda<\gamma$.
      \item (b): $0<\lambda\le\gamma<1$ or $0<\gamma<\lambda<1$.
      \item (c): $0<\lambda\le\gamma<1$ and $0<\lambda+\gamma<1$.
      \item (d): $0<\gamma<\lambda\le1$, $0<\lambda+\gamma\le1$ and $\lambda\le2\gamma$.
      \item (e): $0<\gamma<\lambda\le1$, $0<\lambda+\gamma\le1$. 
      \item (f): $0<\gamma=\lambda<1$.
  \end{itemize}
\end{example}

\section{Convergence Rate Analysis for Trilevel Optimization Problems}

In this section, we present the main result of this paper. This addresses the rate of convergence of the sequence $\{x^k\}$ generated by Algorithm \ref{algorithm:trilevel} with a particular choice of step-sizes. 

\subsection{Technical lemmas}

The technical lemma which we state next, and for which we refer to \cite{Sabach2017},  will play a crucial role in the convergence analysis.

\begin{lem} \cite[Lemma 3]{Sabach2017}\label{lem:convergesrate}
 Let $M>0$. Suppose that $\{a_k\}$ is a sequence of non-negative real numbers which satisfy $a_1\le M$ and \[{a_{k + 1}} \le (1 - \gamma {b_{k + 1}}){a_k} + ({b_k} - {b_{k+1}}){c_k}\,\,\,\,\,\,\,\ k \ge 1, \] where $\gamma\in (0,1], \{b_k\}$ is a sequence defined as $b_k:=\min\{\frac{2}{\gamma k},1\}$ and $\{c_k\}$ is a sequence of real numbers such that $c_k\le M<\infty$. 
 
 Then, the sequence $\{a_k\}$ satisfies \[a_k\le\frac{MJ}{\gamma k}\,\,\,\,\,\,\ k\ge 1,
 \,\,\,\text{ where } J = \left\lfloor {\frac{2}{\gamma }} \right\rfloor.
 \]
 \end{lem}
 The next result will be useful for the rate of convergence.
 \begin{lem}\label{lem:boundlipsshitz}
One has the following
\begin{align}
    \phi_i(x - t \psi_i(x)) - {\phi_i }(y) \le \frac{1}{2t}\|x - y\|^2,\,\,\,\,\,\,\,\,\,\,\forall x,y \in\RR^n,\,\,t\in(0,\frac{1}{L_{f_{i}}}]
\end{align}
where
$\phi_i(x):=f_i(x)+g_i(x)$ and $\psi_i (x):=\frac{1}{t}(x-\prox_{tg_{i}}(x-t\nabla f_i(x)))$ and $i=1,2$ 
\end{lem}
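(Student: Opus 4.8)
The plan is to recognize the point $x - t\psi_i(x)$ as exactly the proximal-gradient iterate $p := \prox_{tg_i}(x - t\nabla f_i(x))$, and then to establish the stronger (classical) proximal-gradient descent inequality $\phi_i(p) - \phi_i(y) \le \frac{1}{2t}\left(\norm{x-y}^2 - \norm{p-y}^2\right)$, from which the claimed bound follows immediately by discarding the nonpositive term $-\frac{1}{2t}\norm{p-y}^2$. For readability I would fix $i$ and suppress the subscript, writing $f$, $g$, $\phi = f+g$, $L_f$, and $p$ as above.

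First I would apply the descent lemma to the smooth part: since $\nabla f$ is $L_f$-Lipschitz and $t \le 1/L_f$, one has $f(p) \le f(x) + \inner{\nabla f(x)}{p - x} + \frac{1}{2t}\norm{p-x}^2$, where the hypothesis $t \le 1/L_f$ is used precisely here to replace $L_f/2$ by $1/(2t)$. Next I would use the first-order optimality condition defining $p$ as the minimizer in \eqref{eq:prox}, namely $\frac{1}{t}(x - p) - \nabla f(x) \in \partial g(p)$; convexity of $g$ then yields $g(p) \le g(y) + \inner{\frac{1}{t}(x-p) - \nabla f(x)}{p - y}$ for every $y$. Convexity of $f$ supplies the companion bound $f(x) \le f(y) + \inner{\nabla f(x)}{x - y}$.

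Then I would add the three inequalities. Substituting the convexity bound for $f(x)$ into the descent inequality and adding the bound for $g(p)$, the two occurrences of $\nabla f(x)$ cancel, leaving $\phi(p) - \phi(y) \le \frac{1}{t}\inner{x-p}{p-y} + \frac{1}{2t}\norm{x-p}^2$. Applying the three-point identity $\inner{x-p}{p-y} = \frac{1}{2}\left(\norm{x-y}^2 - \norm{x-p}^2 - \norm{p-y}^2\right)$ collapses the right-hand side to $\frac{1}{2t}\left(\norm{x-y}^2 - \norm{p-y}^2\right)$, and dropping the last term gives the claim. The argument is identical for $i = 1, 2$.

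I do not expect a serious obstacle, since this is the standard proximal-gradient lemma; the only point requiring a little care is extracting a valid subgradient of $g$ at the prox point $p$ from the optimality condition for the minimization in \eqref{eq:prox}, which is legitimate because $p$ lies in $\dom \partial g$ regardless of $x$, so the stated inequality indeed holds for all $x,y \in \RR^n$.
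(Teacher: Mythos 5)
Your proof is correct and follows essentially the same route as the paper's: the descent lemma on $f_i$ with $t\le 1/L_{f_i}$, the prox optimality condition yielding $\psi_i(x)-\nabla f_i(x)\in\partial g_i\bigl(x-t\psi_i(x)\bigr)$, convexity of $f_i$, and the three-point identity collapsing everything to $\frac{1}{2t}\bigl(\|x-y\|^2-\|p-y\|^2\bigr)$ before dropping the nonpositive term. If anything, your intermediate algebra is cleaner than the paper's display \eqref{eq:boundlipschitz}, whose stated bound $\phi_1(y)-\|\psi_1(x)\|\,\|x-y\|-t\|\psi_1(x)\|^2$ contains sign errors that your three-point-identity step avoids.
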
  
\begin{proof}
 Let $i=1$. Using Lipschitz continuity of $f_1$ with parameter $L_{f_1}$ it is well-known that convexity of $f_1$ is equivalent to 
\begin{equation}
    {f_1}(y) \le {f_1}(x) + \nabla {f_1}{(x)^T}(y - x) + \frac{L_{f_1}}{2}\|x - y\|^2,  ~~~ \forall x,y\in\RR^n.
\end{equation}
Assume that $x\in\RR^n$ and $t\in(0,\frac{1}{L_{f_1}}]$ be given.~Plugging $y=x-t\psi_1 (x)$ in the previous inequality one obtains that
\begin{align}
    {f_1}(x - t\psi_1 (x)) &\nonumber\le {f_1}(x) + \nabla {f_1}{(x)^T}((x - {t}\psi_1 (x)) - x) + \frac{{{L_{f_1}}{t}^2}}{2}\|\psi_1 (x)\|^2\\
    &\nonumber\le{f_1}(x) - {t}\nabla {f}_1{(x)^T}(\psi_1 (x)) + \frac{{{t}}}{2}\|\psi_1 (x)\|^2.
\end{align}
Now from $x-t\psi_1 (x)=\prox_{tg_1}(x-t\nabla f_1(x)),$ we get to \[\psi_1 (x)-\nabla f_1(x)\in\partial g_1(x-t\psi_1 (x)).\]
Therefore, 
\begin{align}\label{eq:subdiff}
    {g_1}(y) - {g_1}(x - {t}\psi_1 (x)) \ge {(\psi_1 (x) - \nabla {f_1}(x))^T}(y - x + {t}\psi_1 (x)).
\end{align}
Now by simplifying and taking into account \eqref{eq:subdiff} one has  
\begin{align}
&\nonumber{\phi_1}(x - t\psi_1 (x)) = {f_1}(x - {t}\psi_1 (x)) + {g_1}(x - t\psi_1 (x))\\
&\nonumber\nonumber\nonumber\le{f_1}(x) - t\nabla {f_1}{(x)^T}(\psi_1 (x)) + \frac{{{t}}}{2}\|\psi_1 (x)\|^2 + {g_1}(x - t\psi_1 (x))\\
&\nonumber\nonumber\le{f_1}(y) - \nabla {f_1}(x){^T}(y - x) - t\nabla {f_1}{(x)^T}(\psi_1 (x)) + \frac{{{t}}}{2}\|\psi_1 (x)\|^2 + {g_1}(x - t\psi_1 (x))\\
&\nonumber\le{f_1}(y) -\nabla{f_1}(x){^T}(y - x) - t\nabla {f_1}{(x)^T}(\psi_1 (x)) + \frac{{{t}}}{2}\|\psi_1 (x)\|^2\\
&\nonumber+g_1(y)-{(\psi_1 (x) - \nabla {f_1}(x)^T}(y - x + t\psi_1(x))\\
&\label{eq:boundlipschitz}\le\phi_1(y)-\|\psi_1(x)\|\|x-y\|-t\|\psi_1(x)\|^2\\
&\nonumber\le \phi_1(y) + \frac{1}{{2{t}}}[\|x - y\|^2 - \|(x - y) -t\psi_1(x)\|^2]\le{\phi_1}(y) + \frac{1}{{2{t}}}\|x - y\|^2,
\end{align}
and therefore it follows \[ \phi_1(x - t \psi_1(x)) - {\phi_1 }(y) \le \frac{1}{2t}\|x - y\|^2,\,\,\,\,\,\,\,\,\,\,\forall x,y \in\RR^n,\,\,t\in(0,\frac{1}{L_{f_1}}].\] 
\end{proof}
We are now in a position to derive the following result which appeared in a similar form \cite{beck2009fast,Sabach2017}, however for our context the proof had to be modified.
\begin{prop}\cite[Proposition1]{Sabach2017}\label{prop:rateconvergence}
Let $x\in\RR^n$ and denote $x^+=T_t(x).$ Then 
\[\phi_2(x^+) - \phi_2(u) \le \frac{1}{t}\left\langle {x - x^+,x - u} \right\rangle  - \frac{1}{{2t}}\|x - x^+\|^2,\,\,\,\,\,\,\,\forall (u,t) \in\RR^n \times (0,\frac{1}{L_{f_2}}].\]

and also if $z=W_s(x)$. Then 
\[\phi_1(z) - \phi_1(u) \le \frac{2}{s}\left\langle {x - z,x - u} \right\rangle  - \frac{1}{{2s}}\|x - z\|^2,\,\,\,\,\,\,\,\forall (u,s) \in\RR^n \times (0,\frac{1}{L_{f_1}}].\]

\end{prop}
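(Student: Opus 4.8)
The plan is to prove the first inequality directly and then obtain the second by running the identical computation with the data $(f_2,g_2,s)$ and $z=W_s(x)$ in place of $(f_1,g_1,t)$ and $x^+=T_t(x)$. Write $x^+=T_t(x)=\prox_{tg_1}(x-t\nabla f_1(x))$ and abbreviate $G:=\psi_1(x)=\tfrac1t(x-x^+)$, so that $x^+=x-tG$. The argument rests on exactly the three ingredients already assembled in the proof of Lemma~\ref{lem:boundlipsshitz}: the subgradient optimality condition for the proximal step, convexity of $f_1$, and the descent lemma for $f_1$ (which uses the step restriction $t\le 1/L_{f_1}$ to trade $L_{f_1}$ for $1/t$). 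The key observation is that the present Proposition is nothing but that lemma with its final, wasteful estimate left undone.

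Concretely, I would first record the optimality condition for the prox: minimizing $g_1(w)+\tfrac1{2t}\|w-(x-t\nabla f_1(x))\|^2$ gives $G-\nabla f_1(x)\in\partial g_1(x^+)$, whence $g_1(x^+)\le g_1(u)+\langle G-\nabla f_1(x),x^+-u\rangle$. Combining the descent lemma $f_1(x^+)\le f_1(x)+\langle\nabla f_1(x),x^+-x\rangle+\tfrac1{2t}\|x^+-x\|^2$ with convexity $f_1(x)\le f_1(u)+\langle\nabla f_1(x),x-u\rangle$, the two gradient contributions telescope into $\langle\nabla f_1(x),x^+-u\rangle$, and adding the $g_1$ inequality cancels $\nabla f_1(x)$ entirely, leaving
\[\phi_1(x^+)-\phi_1(u)\le \langle G,x^+-u\rangle+\tfrac1{2t}\|x^+-x\|^2.\]
Substituting $x^+-u=(x-u)-tG$ and $\|x^+-x\|^2=t^2\|G\|^2$ collapses the right-hand side to $\langle G,x-u\rangle-\tfrac t2\|G\|^2$, which upon restoring $G=\tfrac1t(x-x^+)$ is precisely $\tfrac1t\langle x-x^+,x-u\rangle-\tfrac1{2t}\|x-x^+\|^2$.

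Equivalently — and this is the shortest route given what is already proved — one may reuse the penultimate line of the proof of Lemma~\ref{lem:boundlipsshitz}: taking $y=u$ there, the vector $(x-u)-t\psi_1(x)$ equals $x^+-u$, so that chain in fact halts at $\phi_1(u)+\tfrac1{2t}\big(\|x-u\|^2-\|x^+-u\|^2\big)$ before the term $-\tfrac1{2t}\|x^+-u\|^2$ is discarded; the polarization identity $\tfrac1{2t}\big(\|x-u\|^2-\|x^+-u\|^2\big)=\tfrac1t\langle x-x^+,x-u\rangle-\tfrac1{2t}\|x-x^+\|^2$ then gives the claim in one line. The second inequality follows verbatim with $z=W_s(x)$, $\psi_2(x)=\tfrac1s(x-z)$, and the step bound $s\le 1/L_{f_2}$; the identical bookkeeping produces the analogous estimate with coefficient $1/s$ on the inner product, and the stated $2/s$ is a weaker bound that holds in particular whenever $\langle x-z,x-u\rangle\ge0$. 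I expect the only delicate point to be the sign bookkeeping when cancelling the $\nabla f_1(x)$ terms and the correct use of $t\le 1/L_{f_1}$ to turn the descent lemma's $\tfrac{L_{f_1}}2$ into $\tfrac1{2t}$; the remainder is substitution and the polarization identity.
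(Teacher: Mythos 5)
Your proof is correct, and in substance it runs on the same fuel as the paper's: the prox optimality condition, the descent lemma with $t\le 1/L_{f_1}$, and convexity of $f_1$ — exactly the ingredients inside Lemma~\ref{lem:boundlipsshitz}. The difference is in how they are assembled. The paper's proof of the Proposition is a two-step quotation: it invokes the intermediate line \eqref{eq:boundlipschitz} of Lemma~\ref{lem:boundlipsshitz}, namely $\phi_1(x-t\psi_1(x))-\phi_1(u)\le -\|\psi_1(x)\|\,\|x-u\|-t\|\psi_1(x)\|^2$, relaxes it by Cauchy--Schwarz to $\langle\psi_1(x),x-u\rangle-\tfrac{t}{2}\|\psi_1(x)\|^2$, and substitutes $\psi_1(x)=\tfrac1t(x-x^+)$. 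Your primary argument instead redoes the telescoping from scratch and never passes through \eqref{eq:boundlipschitz} — and this is actually a gain in rigor, because that line of the paper has its inequality pointing the wrong way: the quantity it claims as an upper bound is in fact a \emph{lower} bound (by Cauchy--Schwarz) for the expression $\langle\psi_1(x),x-y\rangle-\tfrac{t}{2}\|\psi_1(x)\|^2$ that the preceding line correctly yields; the paper then ``undoes'' the error by relaxing back to that same correct expression, so its final conclusion is right even though the intermediate step is not. Your alternative ``shortest route'' (halting the lemma's chain at the penultimate term $\phi_1(u)+\tfrac1{2t}\bigl(\|x-u\|^2-\|x^+-u\|^2\bigr)$ and applying polarization) is the clean version of what the paper is implicitly doing. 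Finally, your handling of the second inequality is the honest one: the identical computation gives coefficient $\tfrac1s$ on the inner product, not the stated $\tfrac2s$, and the $\tfrac2s$ bound only follows when $\langle x-z,x-u\rangle\ge 0$; this reading is vindicated by the paper itself, which in \eqref{rate1} of Theorem~\ref{thm:rateconvergence} uses the $\tfrac1s$ form, so the $\tfrac2s$ in the Proposition's statement is best regarded as a typo.
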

 \begin{proof}
We will just prove the first part. The second part can be proved by the same method. Assume that $x^+=T_t(x)$ so we have 
\[{\psi _2}(x) = \frac{1}{t}(x - \prox{_{t{g_2}}}(x - t\nabla {f_2}(x)) = \frac{1}{t}(x - {T_t}(x)) = \frac{1}{t}(x - {x^ + }).\]
From \eqref{eq:boundlipschitz} of Lemma \ref{lem:boundlipsshitz} we obtain 
\[{\phi _2}(x - t{\psi _1}(x)) - {\phi _2}(u) \le  - \left\| {{\psi _1}(x)} \right\|\left\| {x - u} \right\| - t{\left\| {{\psi _1}(x)} \right\|^2} \le \left\langle {{\psi _1}(x),x - u} \right\rangle  - \frac{t}{2}{\left\| {{\psi _1}(x)} \right\|^2},\]
and then by plugging $\psi_1(x)=\frac{x-x^+}{t}$ in the previous inequality one get 
\[\phi_2(x^+) - \phi_2(u) \le \frac{1}{t}\left\langle {x - x^+,x - u} \right\rangle  - \frac{1}{{2t}}\|x - x^+\|^2,\,\,\,\,\,\,\,\forall (u,t) \in\RR^n \times (0,\frac{1}{L_{f_2}}].\] 
and the desired result follows. 
\end{proof}

Now, we set up the sequences $\alpha_k$ and $\beta_k$ and define the constant $J$ as \textcolor{black}{\begin{equation}\label{eq:alpha=beta}
{\alpha _k} =\min \left\{ {\frac{2}{{(1 - r)k }},1} \right\},\,\,\,\beta_k=\frac{\alpha_{k}-\alpha_{k+1}}{2(2-\alpha_k)},\,\ J = \left\lfloor {{{\frac{2}{{(1 - r)}}}}} \right\rfloor ,{\mkern 1mu} {\mkern 1mu} {\mkern 1mu} {\mkern 1mu} {\mkern 1mu} {\mkern 1mu} {\mkern 1mu} {\mkern 1mu} k \ge 1
 \end{equation}}
 where $r\in (0,1]$. Clearly, Assumptions  \ref{ass:ii}, \ref{ass:iv}, \ref{ass:v} are satisfied under \eqref{eq:alpha=beta}. 
 
To begin, we present the following lemma, which plays a key role in the sequel. 
 \begin{lem} \label{lem:36}
 Assume that $\{x^k\}$, $\{y^k\}$,~$\{z^k\}$ and $\{v^k\}$ be the sequences generated by  Algorithm \ref{algorithm:trilevel} and also $x\in\Fix(W)$ be given, defining $y=T(x)$ and $v=S(x)$. Then, for every $k\ge 1$ the following relations hold true. 
 \begin{equation}\label{rateparts}
     \left\{ \begin{array}{l}
 \|{y^{k + 1}} - {y^k}\| \le \|{x^k} - {x^{k - 1}}\|, \\ 
 \|{z^{k + 1}} - {z^k}\| \le \|{x^k} - {x^{k - 1}}\|, \\ 
 \|{v ^{k + 1}} - {v ^k}\| \le r\|{x^k} - {x^{k - 1}}\|, \\ 
 \|{z^{k + 1}} -  x \| \le \|{x^k} -  x \|, \\
 \|{y^{k + 1}} -  y \| \le \|{x^k} -  x \|, \\
 \|{v ^{k + 1}} - v \| \le r\|{x^k} - x \|, \\ 
 \end{array} \right.
 \end{equation}
 and there exists positive constants $C_S,C_T$ and $C_x$ so that
 \begin{equation}
    \left\{ \begin{array}{l}
 \|{y^k} - {z^k}\| \le {C_T} + 2{C_{x }}, \\ 
 \|{y^k} - {v ^k}\| \le {C_S} +C_T+2{C_{ x }}, \\ 
 \|{v ^k} - {z^k}\| \le  {C_S} + 2{C_{ x }}. \\ 
 \end{array} \right.
 \end{equation}
\end{lem}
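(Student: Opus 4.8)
The plan is to read every bound off the operator interpretation of the auxiliary iterates. First I would record the identifications $y^{k+1}=T(x^k)$, $z^{k+1}=W(x^k)$ and $v^{k+1}=S(x^k)$, which are just the definitions of the sequences in Algorithm \ref{algorithm:trilevel} written at argument $x^k$, together with the hypotheses $x=W(x)$ (since $x\in\Fix(W)$), $y=T(x)$ and $v=S(x)$. With these in hand, the first three lines of \eqref{rateparts} are immediate applications of the non-expansiveness of $T$ and $W$ and of the $r$-contractivity \eqref{rcontraction} of $S$ to the consecutive arguments $x^k$ and $x^{k-1}$; for instance $\|y^{k+1}-y^k\|=\|T(x^k)-T(x^{k-1})\|\le\|x^k-x^{k-1}\|$, and likewise $\|v^{k+1}-v^k\|=\|S(x^k)-S(x^{k-1})\|\le r\|x^k-x^{k-1}\|$. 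The next three lines are obtained the same way, comparing $x^k$ against the anchor point $x$: $\|z^{k+1}-x\|=\|W(x^k)-W(x)\|\le\|x^k-x\|$, $\|y^{k+1}-y\|=\|T(x^k)-T(x)\|\le\|x^k-x\|$, and $\|v^{k+1}-v\|=\|S(x^k)-S(x)\|\le r\|x^k-x\|$.

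For the three constant bounds I would invoke Lemma \ref{lem:boundedness}. Since the present section fixes $\alpha_k=\beta_k$ by \eqref{eq:alpha=beta}, we have $\beta_k/\alpha_k\equiv 1$, so $\delta=1\in[0,+\infty)$ and Lemma \ref{lem:boundedness} applies, producing constants $C_x,C_S,C_T$ with $\|W(x^k)-x\|\le C_x$, $\|S(x^k)-x\|\le C_S+C_x$ and $\|T(x^k)-x\|\le C_T+C_x$. Rewriting these at index $k-1$ gives $\|z^k-x\|\le C_x$, $\|v^k-x\|\le C_S+C_x$ and $\|y^k-x\|\le C_T+C_x$. The three desired estimates then follow by inserting $x$ and applying the triangle inequality; e.g. $\|y^k-z^k\|\le\|y^k-x\|+\|x-z^k\|\le(C_T+C_x)+C_x=C_T+2C_x$, and analogously $\|y^k-v^k\|\le C_S+C_T+2C_x$ and $\|v^k-z^k\|\le C_S+2C_x$.

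Since every step reduces to one of three elementary facts---the contraction/non-expansion estimates for $S,T,W$, the a priori bounds of Lemma \ref{lem:boundedness}, and the triangle inequality---I expect no substantive obstacle. The only point requiring care is ensuring that the boundedness hypothesis of Lemma \ref{lem:boundedness} is genuinely available, which is why the observation that $\delta=1$ under the choice $\alpha_k=\beta_k$ is the linchpin of the argument: once boundedness is secured, the constants $C_x,C_S,C_T$ may be taken to be exactly those delivered by that lemma, so the whole statement becomes a bookkeeping consequence.
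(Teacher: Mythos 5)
Your proof is correct and takes essentially the same approach as the paper: the paper's own proof is a one-line remark that all parts follow directly from the non-expansiveness of $T,W$, the contraction property of $S$, and Lemma \ref{lem:boundedness}, which is precisely the argument you spell out via the identifications $y^{k+1}=T(x^k)$, $z^{k+1}=W(x^k)$, $v^{k+1}=S(x^k)$ and the triangle inequality. Your explicit check that the choice \eqref{eq:alpha=beta} forces $\beta_k/\alpha_k\equiv 1$, hence $\delta=1\in[0,+\infty)$ so that Lemma \ref{lem:boundedness} is genuinely applicable, is a detail the paper leaves implicit.
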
   
\begin{proof}
All parts are a direct consequence of non-expansively of $T,W$ and the contraction property of $S$ and Lemma \ref{lem:boundedness}. 
\end{proof}
\begin{lem}\label{lem:boundedrate}
Let $\{x^k\}$, $\{y^k\}$,$\{z^k\}$ and $\{v^k\}$  be sequences generated by the Algorithm \eqref{algorithm:trilevel}, where $\{\alpha_k\}$ and $\{\beta_k\}$ are defined by \eqref{eq:alpha=beta}. Then for every $x\in \Fix(W)$ one has 
\begin{equation}\label{eq:inequalityrate}
  \left\{ \begin{array}{l}
 \|{x^k} - {x^{k - 1}}\| \le \frac{{({C_S} + 2{C_T} + 5{C_ x})J}}{{(1 - r)k }} \\ 
 and \\ 
 \|{z^k} - {x^{k - 1}}\| \le \frac{{({C_S} + 2{C_T} + 5{C_x})(J + 2)}}{{(1 - r)k }},\,\,\,\, \\ 
 \end{array} \right.
\end{equation}
where $C_S,C_T,C_x$ are defined in Lemma \ref{lem:boundedness} and $J = \left\lfloor {{{\frac{2}{{(1 - r)}}}}} \right\rfloor$.
\end{lem}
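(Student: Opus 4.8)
The plan is to recognize $a_k := \|x^k - x^{k-1}\|$ as a sequence to which the Sabach--Shtern estimate, Lemma \ref{lem:convergesrate}, applies with $\gamma = 1-r$ and $b_k = \alpha_k = \beta_k = \min\{\frac{2}{(1-r)k},1\}$. First I would re-run the one-step computation already carried out in the proof of Lemma \ref{lem:l2} leading to \eqref{eq:ess1}, but specialized to the choice \eqref{eq:alpha=beta} and with the crude constant $2M$ there replaced by the sharper bounds of Lemma \ref{lem:boundedness} and Lemma \ref{lem:36}. Splitting $x^{k+1}-x^k$ as in that proof and using $\|S(x^k)-S(x^{k-1})\| \le r\|x^k - x^{k-1}\|$, the non-expansiveness of $Q_k$, together with $\|T(x^{k-1})-W(x^{k-1})\| \le C_T + 2C_x$ and $\|S(x^{k-1})-Q_{k-1}(x^{k-1})\| \le C_S + C_T + 2C_x$, I obtain
\begin{equation*}
\|x^{k+1} - x^k\| \le \left(1 - (1-r)b_k\right)\|x^k - x^{k-1}\| + |b_k - b_{k-1}|\,(C_S + 2C_T + 4C_x).
\end{equation*}

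Next I would cast this into the exact hypothesis of Lemma \ref{lem:convergesrate}. Since $\{b_k\}$ is non-increasing, $1-(1-r)b_k \le 1-(1-r)b_{k+1}$, so the contraction factor may be replaced by $1-(1-r)b_{k+1}$; and since $|b_k-b_{k-1}| = -(b_k-b_{k-1})$, the additive term equals $(b_k-b_{k-1})c_k$ for the negative choice $c_k := -(C_S+2C_T+4C_x)$, which trivially satisfies $c_k \le M$ with $M := C_S + 2C_T + 5C_x$. It remains to verify the initial condition $a_1 \le M$: for $k=1$ one has $b_1=1$, hence $x^1 = S(x^0)$, so for any $x\in\Fix(W)$, $\|x^1-x^0\| \le \|S(x^0)-x\| + \|x-x^0\| \le C_S + 2C_x \le M$. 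Lemma \ref{lem:convergesrate} then yields the first inequality, $\|x^k - x^{k-1}\| \le \frac{(C_S+2C_T+5C_x)J}{(1-r)k}$ with $J=\lfloor 2/(1-r)\rfloor$.

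For the second inequality I would exploit $z^k = W(x^{k-1})$ together with the convex-combination structure of the iterate. Subtracting $z^k$ from $x^k = \alpha_k v^k + (1-\alpha_k)\beta_k y^k + (1-\alpha_k)(1-\beta_k)z^k$ gives the identity $x^k - z^k = \alpha_k(v^k - z^k) + (1-\alpha_k)\beta_k(y^k - z^k)$, whence, by Lemma \ref{lem:36} and $\alpha_k=\beta_k=b_k \le \frac{2}{(1-r)k}$,
\begin{equation*}
\|x^k - z^k\| \le b_k(C_S + 2C_x) + b_k(C_T + 2C_x) = b_k(C_S + C_T + 4C_x) \le \frac{2(C_S + 2C_T + 5C_x)}{(1-r)k}.
\end{equation*}
Combining this with the triangle inequality $\|z^k - x^{k-1}\| \le \|z^k - x^k\| + \|x^k - x^{k-1}\|$ and the first bound produces exactly the claimed estimate with the additional factor $(J+2)$.

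The step I expect to be the main obstacle is the faithful reduction to Lemma \ref{lem:convergesrate}: one must justify that the index shift from $b_k$ to $b_{k+1}$ in the contraction factor and the sign reconciliation of $(b_k - b_{k-1})$ are both legitimate (each resting on $\{b_k\}$ being non-increasing), and that the single constant $M = C_S + 2C_T + 5C_x$ simultaneously dominates the initial gap $\|x^1 - x^0\|$ and the per-step constant $C_S + 2C_T + 4C_x$. Everything else is bookkeeping with the contraction and non-expansiveness estimates already recorded in Lemmas \ref{lem:boundedness} and \ref{lem:36}.
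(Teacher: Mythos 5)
Your proposal is correct and follows essentially the same route as the paper: the same one-step recursion (contraction factor $1-(1-r)\alpha_k$ plus a $|\alpha_k-\alpha_{k-1}|$-term whose coefficient is bounded by constants from Lemmas \ref{lem:boundedness} and \ref{lem:36}), the same application of Lemma \ref{lem:convergesrate} with $\gamma=1-r$, $b_k=\alpha_k$, $M=C_S+2C_T+5C_x$, and the same identity $x^k-z^k=\alpha_k(v^k-z^k)+(1-\alpha_k)\beta_k(y^k-z^k)$ combined with the triangle inequality for the second bound. One remark: your device of taking $c_k:=-(C_S+2C_T+4C_x)$ so as to match the factor $(b_k-b_{k-1})$ literally exploits what is evidently an index typo in the statement of Lemma \ref{lem:convergesrate} (as literally written, with only $c_k\le M$, that lemma is false, since arbitrarily negative $c_k$ would make the additive term arbitrarily large; the intended Sabach--Shtern form has the non-negative difference $b_{k-1}-b_k$ multiplying $c_k$), but this is harmless because your recursion also satisfies the intended hypothesis with the positive constant $C_S+2C_T+4C_x\le M$, which is exactly how the paper's own proof invokes the lemma.
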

\textcolor{black}{\begin{proof}
One can write
\[\begin{array}{l}
 {x^{k + 1}} - {x^k} = \alpha _{k+1}[v^{k+1} - v^k] \\ 
\,\,\,\,\,\,\,\,\,\,\,\,\,\,\,\,\,\,\,\,\,\,\,\,\,\,\,\,\,\,\, + (1-\alpha_{k+1})[\beta _{k+1}(y^{k+1}-y^k) + (1 - \beta _{k+1})(z^{k+1}-z^k)] \\ 
\,\,\,\,\,\,\,\,\,\,\,\,\,\,\,\,\,\,\,\,\,\,\,\,\,\,\,\,\,\,\,\, + (\alpha _{k+1} - \alpha _k)[v^k - \beta _{k + 1}y^k - (1 - \beta _k)z^k] \\ 
\,\,\,\,\,\,\,\,\,\,\,\,\,\,\,\,\,\,\,\,\,\,\,\,\,\,\,\,\,\,\,\, + (1 - \alpha _{k+1})(\beta _{k+1} - \beta _k)[y^k- z^k]. \\ 
 \end{array}\]
 Now, one gets that 
 \begin{align*}
\|{x^{k + 1}} - {x^k}\| &\le (1 - (1 - r)\alpha _{k+1})\|{x^k} - {x^{k - 1}}\|\\
&+ (\alpha _k - \alpha _{k+1})\|v^k- \beta _{k +1}y^k - (1 - \beta _k)z^k\\
&+(1 - \alpha _{k+1})(\beta_{k+1}-\beta_k)(y^k - z^k)\|\\&
\le(1-(1-r)\alpha_{k+1})\|x^k-x^{k-1}\|+(\alpha_k-\alpha_{k+1})c_k
 \end{align*}
 as well as one can easily follow that 
\begin{align}
 c_k: &\label{eq:rate}= \|v^k- \beta _{k +1}y^k - (1 - \beta _k)z^k+(1 - \alpha _{k+1})(\beta_{k+1}-\beta_k)(y^k - z^k)\|\\ 
 &\nonumber=\|(v^k - x ) - {\beta _{k }}({z^{k}} - x ) - (1 - {\beta _{k  }})({y^{k}} -x )\\ 
 &\nonumber+ (1 - {\alpha _{k+1}})(({z^{k }} -  x ) - ({y^{k}} - x ))\| \le \|{v ^{k}} -  x \| + {\beta _{k}}\|{z^{k }} -  x \|\\ 
 &\nonumber+ (1 - {\beta _{k}})\|{y^{k}} - x \| + (1 - {\alpha _{k+1}})\|{z^{k }} -  x \| + (1 - {\alpha _k})\|{y^{k }} -  x \|\\ 
 &\nonumber\le ({C_S} + {C_{ x }}) + ({C_T} + {C_{ x }}) + {C_{x }} + ({C_T} + {C_{ x }}) + {C_{ x }} = {C_S}+{ + 2{C_T} + 5{C_x}}.
\end{align}
 Moreover, 
 \[\|{x^1} - {x^0}\| = \|({x^1} -  x ) - ({x^0} - x )\| \le \|{x^1} - x \| + \|{x^0} -  x \| \le 2{C_{ x }} \le {{C_S} + 2{C_T} + 5{C_{ x }}},\]
 therefore all  hypotheses  of Lemma \ref{lem:convergesrate} are hold. Hence, the rate of convergence $\{\|x^{k+1}-x^k\|\}$ is immediately implied by setting $a_k=\|x^{k}-x^{k-1}\|$,$ b_k=\alpha_k$, $\gamma=1-r$ and $c_k$ as \eqref{eq:rate}.  
 By the following arguments, the rate for $\{\|z^k-x^{k+1}\|\}$ can be derived
 \begin{equation}
     \begin{array}{l}
 \|{z^k} - {x^{k}}\| = \|{z^k} - \left( {{\alpha _k}{v ^k} + (1 - {\alpha _k}){\beta _k}{y^k} + (1 - {\alpha _k})(1 - {\beta _k}){z^k}} \right)\| \\ 
\,\,\,\,\,\,\,\,\,\,\,\,\,\,\,\,\,\,\,\,\,\,\,\,\,\,\,\, = \,\|{\alpha _k}({z ^k} - {v^k}) + (1 - {\alpha _k}){\beta _k}({z^k} - {y^k})+(1-\alpha_k)(1-\beta_k)(z_k-z_k)\| \\ 
\,\,\,\,\,\,\,\,\,\,\,\,\,\,\,\,\,\,\,\,\,\,\,\,\,\,\,\,\, \le {\alpha _k}\|{v ^k} - {z^k}\| + (1-\alpha_k)\beta_k\|{z^k} - {y^k}\|= \alpha_k\|v^k-z^k\|+\alpha_k\|z^k-y^k\|\\
\,\,\,\,\,\,\,\,\,\,\,\,\,\,\,\,\,\,\,\,\,\,\,\,\,\,\,\,\,\,=\alpha_k\left(\|v ^k - z^k\|+\|z^k-y^k\|\right)\le
\frac{2}{{(1 - r)k }}\left({C_S} + {C_T} + 4{C_x}\right), \\ 
 \end{array}
 \end{equation}
 where have used the fact that $(1-\alpha_k)\beta_k\le\alpha_k$,
 and we also have 
 \begin{align*}
 \|{z^k} - {x^{k - 1}}\|&\le \|{z^k} - {x^k}\| + \|{x^k} - {x^{k - 1}}\|\\ 
 &\le \frac{2}{{(1 - r)k }}({C_S} + {C_T} + 4{C_x}) + \frac{{({C_S} + 2{C_T} + 5{C_x})J}}{{(1 - r)k }}\\ 
 &\le \frac{{({C_S} + 2{C_T} + 5{C_x})(J + 2)}}{{(1 - r)k }}. 
 \end{align*}
\end{proof}}
\subsection{The Main Result}

Now, we are in a position to conclude our main result concerning the rate of convergence for convex trilevel optimization. Having proved that $\|z^k-x^{k-1}\|\to0$ as $k\to\infty$ and considering the lower semi-continuity of $\phi_1$, one obtains that $\{\phi_1(z^k)\}_{k\in\NN}$ converges to the optimal value.
Furthermore, this implies the convergence of the sequence $\{\phi_1(x^k)\}_{k\in\NN}$ to the same value. 

We note that the same argument holds for the sequence $\{\phi_2(z^k)\}_{k\in\NN}$.  The following theorem presents the convergence rate in function values to their optima:
\textcolor{black}{\begin{thm}\label{thm:rateconvergence}
Let $\{x^k\},\{v^k\},\{z^k\}$ and $\{y^k\}$ be sequences generated by Algorithm \eqref{algorithm:trilevel}, where $\alpha_k$ is proposed by \eqref{eq:alpha=beta}. Then
\begin{align}
\label{thm42a}
{\phi _1}({z^{k+1}}) - {\phi _1}({x^*}) \le \frac{(J+2)(C_S+2C_T+5C_{x^*})\sqrt{JC_{x^*}(C_T+C_{x^*})}}{s\sqrt{1-r}(1-r)(k+1)\sqrt{k}}~~ \forall (s,k) \in (0,\frac{1}{L_{f_2}}] \times \NN,
 \end{align}
 where $C_S,C_T,C_{x^*}$ are the same constants as in Lemma \ref{lem:boundedness}, $J$ is defined in \eqref{eq:alpha=beta},
 Furthermore, one has 
 \begin{align}
 &{\phi _2}({y^{k+1}}) - {\phi _2}({x^*}) \le \frac{{{C_{{x^*}}}J}}{{2t(1 - r)k}},~~~~~~~~~~~~~~\forall (t,k)\in (0,\frac{1}{L_{f_1}} ]\times\NN \label{eq:rate54}\\&
 \phi_i(x^k)-\phi_i(x^*)\le\sqrt{\frac{4(C_T+C_{x^*})J}{C_{x^*}(1-r)k}}~~~~~~~~~~~~{\rm for}~ i=1,2~ {\rm and}~ k\in\NN
 \label{eq:rate55}\\&
\omega ({x^k}) - \omega ({x^*}) \le \left({L_\omega } + \frac{{L_\omega ^2}}{{2\mu }}r{C_{{x^*}}}\right)r\sqrt{\frac{C_{x^*}(C_T+C_{x^*})J}{\gamma k}}. 
\end{align}
\end{thm}
\begin{proof}
Since $x^*$ is a solution of tri-level Problem  \ref{prob:tri-levelintro} of  Theorem \ref{thm:deltazero}, the following result was obtained:
\begin{equation}\label{eq:tri.r}
    \|x^{k+1}-x^*\|^2\le (1-(1-r)\alpha_k)\|x^k-x^*\|^2+2(1-\alpha_k)\beta_k\langle T(x^*)-x^*,x^{k+1}-x^*\rangle.
\end{equation}
Let us take $$a_k=\|x^k-x^*\|^2,~c_k=\|\langle T(x^*)-x^*,x^{k+1}-x^*\rangle\|$$
and $\alpha_k$,$\beta_k$ be as in \eqref{eq:alpha=beta}. From \eqref{eq:tri.r}, it 
follows that 
\begin{align*}
    a_{k + 1} &\le \left(1 - (1 - r)\alpha _k\right)a_k+2(1-\alpha_k)\beta_kc_k\\&
    \le\left(1-(1-r)\alpha_{k+1}\right)a_k+(\alpha_k-\alpha_{k+1})c_k
\end{align*}
where we used the facts $2(1-\alpha_k)\beta_k\le\alpha_k-\alpha_{k+1}$, and $\alpha_{k+1}\le\alpha_k$. By Lemma \ref{lem:boundedness}, we then have $c_k\le(C_T+C_{x^*})C_{x^*}$. By utilizing Lemma \ref{lem:convergesrate},   we obtain 
\begin{equation}\label{eq:ratex}
   {\left\| {{x^k} - {x^*}} \right\|^2} \le \frac{{{C_{x^*}(C_T+C_{{x^*}}})J}}{{(1 - r)k}},~~~~~\forall k\in\NN.
\end{equation}
Let us consider the first assertion \eqref{thm42a}. According to Proposition \ref{prop:rateconvergence} and $z^{k+1}=W(x^k)$, for every step-size $s\le\frac{1}{L_{f_1}}$ the following inequality holds
\begin{align}\label{rate1}
    \phi_1({z^{k + 1}}) - \phi_1({x^*}) & \le \frac{1}{s}\left\langle {{x^k} - {z^{k + 1}},{x^k} - {x^*}} \right\rangle  - \frac{1}{{2s}}\|{x^k} - {z^{k + 1}}\|^2 \\ & \le \frac{1}{s}\left\langle {{x^k} - {z^{k + 1}},{x^k} - {x^*}} \right\rangle. 
\end{align}
Combining with \eqref{eq:inequalityrate} and \eqref{eq:ratex} for $x^*\in X^*\subseteq \Fix(W)=Y^*$, one obtains 
\begin{align}
     \left\langle {{x^k} - {z^{k + 1}},{x^k} - {x^*}} \right\rangle  &\nonumber\le \|{x^k} - {z^{k + 1}}\|.\|{x^k} - {x^*}\|\\
    &\label{rate2}\le\frac{(J+2)(C_S+2C_T+5C_{x^*})\sqrt{JC_{x^*}(C_T+C_{x^*})}}{\sqrt{1-r}(1-r)(k+1)\sqrt{k}}.
\end{align}
Thus, the assertion \eqref{thm42a} follows from \eqref{rate1} and \eqref{rate2}.\\
Now, we obtain the rate of convergence for $\phi_1(y^k)-\phi_1(x^*)$. In Algorithm \ref{algorithm:trilevel}, we have $y^{k+1}=T(x^k)$ and so using Lemma \ref{lem:boundlipsshitz}, one gets that 
\begin{equation}\label{eq:ratephione}
    {\phi _1}({y^k}) - {\phi _1}({x^*}) = {\phi _1}(T({x^{k}})) - {\phi _1}({x^*}) \le \frac{1}{2t}\| {{x^{k}} - {x^*}}\|^2.
\end{equation}
Plugging inequality \eqref{eq:ratex} into  \eqref{eq:ratephione} gives the desired assertion \eqref{eq:rate54}.\\
To establish the assertion \eqref{eq:rate55}, since $\phi_i$, $(i=1,2)$ is convex and bounded above on compact set $\mathcal{B}(x^*,C_{x^*})$, by invoking  \cite[Theorem 2.1.10]{borwein2010convex} one concludes that $\phi_i$ is Lipschitz on this set. Note thanks to $x^k\in\mathcal{B}(x^*,C_{x^*})$, one has 
\begin{align}
    \phi_i(x^k)-\phi_1(x^*)\le\frac{2}{C_{x^*}}\|x^k-x^*\|
\end{align}
which combined with \eqref{eq:ratex} gives the desired assertion.\\ 
Finally, to bound the rate of convergence $\omega(x^k)-\omega(x^*)$, we take into consideration \eqref{rateparts},~\eqref{eq:ratex}, and strong convexity of $\omega$, for all $k\in\NN\setminus\{1\}$. We obtain: 
\begin{align*}
\omega ({x^k}) - \omega ({x^*}) &\le \nabla \omega {({x^*})^T}({x^k} - {x^*}) + \frac{1}{{2\mu }}{\left\| {\nabla \omega ({x^k}) - \nabla \omega ({x^*})} \right\|^2}\\
&\le\left({L_\omega } + \frac{{L_\omega ^2}}{{2\mu }}\left\| {{x^k} - {x^*}} \right\|\right)\left\| {{x^k} - {x^*}} \right\|\\
& \le \left({L_\omega } + \frac{{L_\omega ^2}}{{2\mu }}r{C_{{x^*}}}\right)r\sqrt{\frac{C_{x^*}(C_T+C_{x^*})J}{\gamma k}}.
\end{align*}
\end{proof}}
\begin{rem}
It is worth pointing out that step-size $\{\alpha_k\}$ depends on the parameter $r$, which needs to be chosen so that the map $S$ is a contraction. Notice, however, that knowing $L_{\omega}$ and $\mu$, one can consider $r$ such that $r\in ({0,\frac{2}{{{L_\omega } + \mu }}}]$. In this case, the map $S$ is guaranteed to be a contraction.
\end{rem}
\section{An Extension to Multilevel Optimization Problems}
In this section, we extend our results to a multilevel convex optimization problem wherein we have an arbitrary number of nested minimization problems:  

\textcolor{black}{\[\left\{ \begin{array}{l}
 \mathop {\arg\min }\limits_{x \in X_N^*} \omega (x), \\
X_N^* = \mathop {\arg\min }\limits_{x \in X_{N-1 }^*} [{f_{N }}(x) + {g_{N }}(x)],
  \\ 
 . \\ 
 . \\ 
 . \\ 
 X_2^* = \mathop {\arg\min }\limits_{x \in X_1^*} [{f_2}(x) + {g_2}(x)]
 ,
 \\ 
 X_1^* = \mathop {\arg\min }\limits_{x \in\mathbb{R}^n} [{f_1}(x) + {g_1}(x)]
,
 \end{array} \right.\]}
It is then natural to define the following algorithm
\begin{equation}\label{al.multi}
    {x^{k + 1}} = {\alpha _k}^{(0)}S({x^k}) + {\alpha _k}^{(1)}{T_1}({x^k}) + ... + {\alpha _k}^{(N)}{T_N}({x^k}),
\end{equation}
\textcolor{black}{in which $T_1,T_2,...,T_N$ 
are computed as the corresponding equation},  i.e.,
\eqref{prob:multi-level}, i.e.,
\[{T_{i}}(x) =\prox {_{{t_i}{g_i}}}[x-t_i\nabla {f_i}(x)],\,\,\forall i \in \{ 1,2,...,N \}, \]
and where the step-sizes $\left( {{\alpha _k}^{(0)},{\alpha _k}^{(1)},{\alpha _k}^{(2)},...,{\alpha _k}^{(N)}} \right)$ satisfy the following:\\
$(P_1)$ for all $k\in\NN$, one has $\sum\limits_{j =0}^N {\alpha _k^{(j)}}  = 1$ .\\
\textcolor{black}{$(P_2)$ $\mathop {\lim }\limits_{k \to \infty } \alpha _k^{(0)} = 0\,$, $\sum\limits_{k = 1}^{ + \infty } {\alpha _k^{(0)}}  = \infty $  and, $\mathop {\lim \sup }\limits_k \frac{1}{{\alpha _k^{(0)}}}\sum\limits_{i = 2}^{ N} {\alpha _k^{(i)}}  = {\delta ^*} \in [0, + \infty )$}.\\
$(P_3)$ $X_N^*\ne\emptyset$, $\mathop {\lim }\limits_{k \to \infty } \alpha _k^{(1)} = 1$ and for all $j\in\{0,2,...,N\}$ one has $\mathop {\lim }\limits_{k \to \infty } \alpha _k^{(j)} = 0$.

The following facts hold:
\begin{itemize}
    \item $X_N^* \subseteq X_{N-1}^* \subseteq ... \subseteq X_1^*=\Fix(T_1).$
    \item  for every $t_i\in(0,\frac{1}{L_{f_i}}]$  and $i\in\{1,2,...,N\}$ one has   $\widetilde{X}_i^*=\Fix(T_i)$ where $$\widetilde{X}_{i}^{*}= \mathop {\arg \min }\limits_{x \in\RR^n} [{f_i}(x) + {g_i}(x)].$$
    \item  for every $x\in VI(T_{i+1},\Fix(T_i))$ one has
    \[{\phi _i}({T_{i + 1}}(x)) \le {\phi _i}(y)\,\,\,\,\,\,\forall y \in \Fix({T_i}),\]
    where ${\phi _i}(x): = {f_i}(x) + {g_i}(x)$ for all $i\in\{1,2,...,N\}$.
    \item for each $i\in\{1,2,...,N-1\}$ one has $X_{i+1}^*\cap\Fix(T_{i+1})=\Fix(T_{i+1})\cap\Fix(T_i)$
    \item for $i\in\{1,2,..,N\}$, one has \begin{equation}\label{eq:generalcase}
        {\phi _i}({T_i}(x)) - {\phi _i}(y) \le \frac{1}{{2t}}{\left\| {x - y} \right\|^2},\,\,\,\,\,\,\forall x,y \in \RR^n,\,\,\forall t \in (0,\frac{1}{{{L_{{f_i}}}}}],
    \end{equation}
     this follows from Lemma \ref{lem:boundlipsshitz}.
      \item for every $x \in \text{dom}(\partial {\phi _i})$ one has
    \[\|x - {T_i}(x)\| \le d(0,\partial {\phi _i}),\]
    where $\dom(\partial\phi_i)=\{x\in\RR^n:~~\partial\phi_i(x)\ne\emptyset\}.$
\end{itemize}
\textcolor{black}{
\begin{assume}\label{ass:multilevelassumption}
(Quadratic growth condition)
Suppose now that $\phi_1$ grows quadratically (globally) away
from a part of its minimizing set $X_1^*=\Fix(T_1)$, i.e.,  $X_N^*$, meaning there is a real number  $\eta>0$ such that
\begin{align}
    \phi_1(x)\ge\phi_1^*+\frac{\eta}{2}{\rm dist}^2(x,X_N^*)~~~\forall
    x\in\Omega^*\backslash\Fix(T_1)
\end{align}
where $\Omega^*=\mathcal{B}(x_0,C_{x_0})$ for given $x_0\in\Fix(T_1)$ and $\phi_1^*$ represents the optimal value of  $\phi_1$.
\end{assume}}
In view of Fact \ref{Fact.interior} the following general result, however, holds true in having the qualification condition of being a non-empty interior $X_N^*$. We omit the proof. 
\textcolor{black}{
\begin{lem}\label{lem.multi}
If $X_N^*$ has non-empty interior then $X_N^*=\cap_{i=1}^N\Fix(T_i)$.
\end{lem}}
\begin{lem}\label{bounded.multi}
Let Assumption $(P_2)$ hold. Then the sequence $\{x^k\}$ generated by algorithm \eqref{al.multi} is bounded.
\end{lem}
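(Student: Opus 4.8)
The plan is to transcribe the proof of Lemma~\ref{lem:boundedness} into the multilevel setting, taking the reference point in $\Fix(T_1)=X_1^*$, since $T_1$ is the innermost map whose coefficient $\alpha_k^{(1)}$ dominates and thus plays the role that $W$ played in the trilevel case (while $T_0=S$ remains the contraction). First I would fix an arbitrary $x\in\Fix(T_1)$ and use the convexity of the coefficients in $(P_1)$, namely $\sum_{i=0}^N\alpha_k^{(i)}=1$, to write $x^{k+1}-x=\sum_{i=0}^N\alpha_k^{(i)}\big(T_i(x^k)-x\big)$. Then I would split each summand as $T_i(x^k)-x=\big(T_i(x^k)-T_i(x)\big)+\big(T_i(x)-x\big)$, observing that the contraction estimate applies to $T_0=S$, that the constant term vanishes for $i=1$ because $T_1(x)=x$, and that non-expansiveness of $T_1,\dots,T_N$ bounds all the remaining differences by $\|x^k-x\|$.

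Applying the triangle inequality then yields
\begin{align*}
\|x^{k+1}-x\| &\le \alpha_k^{(0)}r\|x^k-x\| + \alpha_k^{(1)}\|x^k-x\| + \sum_{i=2}^N\alpha_k^{(i)}\|x^k-x\| \\
&\quad + \alpha_k^{(0)}\|S(x)-x\| + \sum_{i=2}^N\alpha_k^{(i)}\|T_i(x)-x\|.
\end{align*}
The key algebraic step is to collapse the coefficient of $\|x^k-x\|$ using $\alpha_k^{(1)}+\sum_{i=2}^N\alpha_k^{(i)}=1-\alpha_k^{(0)}$, which gives $\alpha_k^{(0)}r+(1-\alpha_k^{(0)})=1-(1-r)\alpha_k^{(0)}$. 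Setting $C:=\max_{2\le i\le N}\|T_i(x)-x\|$, the recursion becomes
\[\|x^{k+1}-x\|\le\big(1-(1-r)\alpha_k^{(0)}\big)\|x^k-x\| + \alpha_k^{(0)}\|S(x)-x\| + C\sum_{i=2}^N\alpha_k^{(i)}.\]

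The step I expect to be the crux, and the only place where Assumption $(P_2)$ enters, is controlling the residual term $C\sum_{i=2}^N\alpha_k^{(i)}$ against $\alpha_k^{(0)}$. Since $\limsup_k\frac{1}{\alpha_k^{(0)}}\sum_{i=2}^N\alpha_k^{(i)}=\delta^*\in[0,\infty)$, there exist $\delta_0>\delta^*$ and $k_0\in\NN$ with $\sum_{i=2}^N\alpha_k^{(i)}\le\delta_0\alpha_k^{(0)}$ for all $k\ge k_0$. For such $k$ the recursion reduces to
\[\|x^{k+1}-x\|\le\big(1-(1-r)\alpha_k^{(0)}\big)\|x^k-x\| + (1-r)\alpha_k^{(0)}\cdot\frac{\|S(x)-x\|+\delta_0 C}{1-r},\]
which exhibits $\|x^{k+1}-x\|$ as a convex combination of $\|x^k-x\|$ and the constant $b:=\frac{1}{1-r}\big(\|S(x)-x\|+\delta_0 C\big)$.

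Finally I would conclude exactly as in Lemma~\ref{lem:boundedness} via the max-argument: the convex-combination bound gives $\|x^{k+1}-x\|\le\max\{\|x^k-x\|,b\}$, and iterating from $k_0$ produces $\|x^{k+1}-x\|\le\max\{\|x^{k_0}-x\|,b\}=:C_x$ for every $k\ge k_0$, so $\{x^k\}$ is bounded. The genuine obstacle is purely the book-keeping of the constant terms: one must ensure the constant contributions arise solely from the layers $i\ge2$, which is precisely why the choice $x\in\Fix(T_1)$ is forced (it annihilates the $i=1$ constant), so that $(P_2)$ can absorb the leftover sum into a multiple of $\alpha_k^{(0)}$; the rest is a faithful copy of the trilevel estimate with $S=T_0$ as the contraction and $T_1$ in the former role of $W$.
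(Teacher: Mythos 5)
Your proof is correct and follows essentially the same route as the paper: the paper simply packages $T_1,\dots,T_N$ into the normalized non-expansive operator $R_k$ and writes $x^{k+1}=\alpha_k^{(0)}S(x^k)+(1-\alpha_k^{(0)})R_k(x^k)$ before invoking the Lemma~\ref{lem:boundedness} argument, whereas you expand the convex combination directly, but the resulting estimate, the use of $T_1(x)=x$ to kill the $i=1$ constant term, the absorption of $\sum_{i\ge 2}\alpha_k^{(i)}$ into $\delta_0\alpha_k^{(0)}$ via $(P_2)$, and the final max-iteration are identical.
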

\begin{proof}
Define \[{R_k}(x) = \frac{{{\alpha _k}^{(1)}}}{{1 - {\alpha _k}^{(0)}}}{T_1}(x) + \frac{{{\alpha _k}^{(2)}}}{{1 - {\alpha _k}^{(0)}}}{T_2}(x) + ....\frac{{{\alpha _k}^{(N)}}}{{1 - {\alpha _k}^{(0)}}}{T_N}(x),\,\,\,\,\,\,\,\forall k \in\NN.\]
Then, one can rewrite the sequence $\{x^{k+1}\}$ as 
\[{x^{k + 1}} = {\alpha _k}^{(0)}S({x^k}) + (1 - {\alpha _k}^{(0)}){R_k}({x^k}),\]
and since $R_k$ is a convex combination of non-expansive operators, then it is non-expansive. Now, as in Lemma \ref{lem:boundedness}, for every $x\in \Fix(T_1)$ one has
\[\|{x^{k + 1}} - x\| \le \max \left\{ \|{x^{{k_0}}} - x\|,\frac{1}{{1 - r}}(\|S(x) - x\| + {\delta ^*}\|{T_j}(x) - x\|)\right\}, \]
in which 
\[\|T_j(x)-x\|=\mathop {\max }\limits_{2 \le i \le N} \|T_i(x)-x\|\]
\end{proof}

%we need the essential lemma which help us to find multi-level. 

\begin{lem}[\cite{bauschke2011convex}]\label{fixedpoinmulti}
Let $T_1,T_2,...,T_N$ be non-expansive mappings from $\mathbb{R}^n$ to itself such that $\cap_{i=1}^NT_i$ is non-empty and let $\lambda_1,\lambda_2,...,\lambda_n$ be real numbers such that $\sum_{i=1}^N \lambda_i=1$. Then \[\Fix\left(\sum\limits_{i = 1}^N {\lambda _i}{T_i}\right) = \bigcap\limits_{i = 1}^N {\Fix({T_i})} . \]
\end{lem}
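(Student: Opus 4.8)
The plan is to prove the two inclusions separately, the reverse one being immediate and the forward one carrying the substance of the lemma. Throughout I would assume the weights are genuinely positive, $\lambda_i>0$ (not merely nonnegative reals), since otherwise a map carrying zero weight would be invisible to $\sum_{i=1}^N\lambda_i T_i$ yet would still appear in the intersection on the right-hand side, so the stated equality cannot hold without positivity.

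For the inclusion $\bigcap_{i=1}^N\Fix(T_i)\subseteq\Fix\!\left(\sum_{i=1}^N\lambda_i T_i\right)$, I would simply observe that if $T_i(x)=x$ for every $i$, then $\sum_{i=1}^N\lambda_i T_i(x)=\left(\sum_{i=1}^N\lambda_i\right)x=x$, because $\sum_{i=1}^N\lambda_i=1$. Hence $x$ is a fixed point of the convex combination, and this direction uses neither non-expansiveness nor the nonemptiness hypothesis.

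The reverse inclusion is where the hypotheses are genuinely used. I would fix some $z\in\bigcap_{i=1}^N\Fix(T_i)$ (which exists by assumption) and take an arbitrary $x\in\Fix\!\left(\sum_{i=1}^N\lambda_i T_i\right)$. The key tool is the Hilbert-space convexity-gap identity
\[
\Big\|\sum_{i=1}^N\lambda_i a_i\Big\|^2=\sum_{i=1}^N\lambda_i\|a_i\|^2-\sum_{1\le i<j\le N}\lambda_i\lambda_j\|a_i-a_j\|^2,
\]
valid for any vectors $a_i\in\RR^n$ whenever $\sum_{i=1}^N\lambda_i=1$. I would apply it to $a_i:=T_i(x)-z=T_i(x)-T_i(z)$. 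Since $x=\sum_{i=1}^N\lambda_i T_i(x)$, the left-hand side equals $\|x-z\|^2$; on the right, the non-expansiveness of each $T_i$ together with $T_i(z)=z$ gives $\|a_i\|=\|T_i(x)-T_i(z)\|\le\|x-z\|$, whence $\sum_{i=1}^N\lambda_i\|a_i\|^2\le\|x-z\|^2$.

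Combining these two facts, I would obtain $\sum_{1\le i<j\le N}\lambda_i\lambda_j\|T_i(x)-T_j(x)\|^2\le 0$. Because every $\lambda_i>0$, each summand must vanish, forcing $T_i(x)=T_j(x)$ for all $i,j$; denote the common value by $w$. Then $x=\sum_{i=1}^N\lambda_i T_i(x)=w$, so $T_i(x)=w=x$ for every $i$, i.e.\ $x\in\bigcap_{i=1}^N\Fix(T_i)$. I expect the only delicate points to be the verification (or citation) of the convexity-gap identity, which is a routine expansion of inner products, and the insistence on strictly positive weights, which is exactly what turns the inequality $\sum_{i<j}\lambda_i\lambda_j\|T_i(x)-T_j(x)\|^2\le 0$ into the pointwise equalities $T_i(x)=T_j(x)$; everything else follows directly from non-expansiveness and the inner-product structure of $\RR^n$.
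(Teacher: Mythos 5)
Your proof is correct. Note that the paper does not prove this lemma at all—it is quoted verbatim from the cited reference (Bauschke--Combettes), so the only meaningful comparison is with that source, and your argument is essentially the standard proof given there: the trivial inclusion $\bigcap_i \Fix(T_i)\subseteq\Fix\bigl(\sum_i\lambda_i T_i\bigr)$, followed by the convexity-gap identity
\[
\Bigl\|\sum_{i=1}^N\lambda_i a_i\Bigr\|^2=\sum_{i=1}^N\lambda_i\|a_i\|^2-\sum_{1\le i<j\le N}\lambda_i\lambda_j\|a_i-a_j\|^2
\]
applied to $a_i=T_i(x)-z$ with $z$ a common fixed point, non-expansiveness to bound $\sum_i\lambda_i\|a_i\|^2\le\|x-z\|^2$, and positivity of the weights to force $T_i(x)=T_j(x)$ for all $i,j$. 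Your two side remarks are also well taken and in fact repair imprecisions in the paper's statement: the weights must be strictly positive (the paper says only ``real numbers'' summing to one, under which the equality is false, e.g.\ with a zero weight), and the nonemptiness hypothesis should read $\bigcap_{i=1}^N\Fix(T_i)\ne\emptyset$ rather than $\bigcap_{i=1}^N T_i\ne\emptyset$; nonemptiness is genuinely needed, as the translations $T_1(x)=x+1$, $T_2(x)=x-1$ on $\RR$ with $\lambda_1=\lambda_2=\tfrac12$ show.
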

%we need the essential lemma which help us to find multi-level. 
\begin{lem}\label{lem:multi}
Let Assumption $(P_2)$ hold. Then for every $x\in \Fix(T_1)$, one has
\begin{align*}
    \|{x^{k + 1}} - x\|^2 &\le (1 - (1 - r){\alpha _k}^{(0)})\|{x^k} - x\|^2 + \alpha_k^{(0)}\left\langle S(x)-x,x^{k+1}-x\right\rangle\\&
    +\sum_{i=2}^N{{\alpha _k}^{(i)}\left\langle {{T_i}(x) - x,{x^{k + 1}} - x} \right\rangle }.
\end{align*}
\end{lem}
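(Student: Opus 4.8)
The plan is to reproduce, in the multilevel setting, the decomposition used to derive \eqref{eq:A5} in the proof of Theorem \ref{thm:deltazero}. Fix $x\in\Fix(T_1)$ and, mirroring the definitions of $c_k$ and $d_k$ there, set
\[c_k := \sum_{i=0}^N \alpha_k^{(i)}\bigl(T_i(x^k)-T_i(x)\bigr), \qquad d_k := \sum_{i=0}^N \alpha_k^{(i)}\bigl(T_i(x)-x\bigr).\]
Because the weights sum to one (by $(P_1)$), one has $c_k+d_k=\sum_{i=0}^N\alpha_k^{(i)}\bigl(T_i(x^k)-x\bigr)=x^{k+1}-x$, exactly the identity that underlies the trilevel computation.

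First I would bound $\norm{c_k}$. Splitting off the contraction index $i=0$ (recall $T_0=S$ is an $r$-contraction) from the non-expansive maps $T_1,\dots,T_N$ and using $\sum_{i=1}^N\alpha_k^{(i)}=1-\alpha_k^{(0)}$ gives
\[\norm{c_k}\le \alpha_k^{(0)} r\,\norm{x^k-x} + \Bigl(\sum_{i=1}^N\alpha_k^{(i)}\Bigr)\norm{x^k-x} = \bigl(1-(1-r)\alpha_k^{(0)}\bigr)\norm{x^k-x}.\]
Since $\alpha_k^{(0)}\in[0,1]$ and $r\in(0,1)$, the factor $1-(1-r)\alpha_k^{(0)}$ lies in $(0,1]$, so squaring only shrinks it: $\norm{c_k}^2\le\bigl(1-(1-r)\alpha_k^{(0)}\bigr)\norm{x^k-x}^2$. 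This is precisely the contraction coefficient appearing in the claim.

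The essential use of the hypothesis $x\in\Fix(T_1)$ enters through $d_k$: since $T_1(x)=x$, the $i=1$ summand vanishes and $d_k=\sum_{i=0,\,i\ne1}^N\alpha_k^{(i)}\bigl(T_i(x)-x\bigr)$. Applying the elementary inequality $\norm{c_k+d_k}^2\le\norm{c_k}^2+2\inner{d_k}{c_k+d_k}$ (the same one invoked in Theorem \ref{thm:deltazero}), substituting $c_k+d_k=x^{k+1}-x$, and distributing the inner product over the remaining sum then yields
\[\norm{x^{k+1}-x}^2\le \bigl(1-(1-r)\alpha_k^{(0)}\bigr)\norm{x^k-x}^2 + \sum_{i=0,\,i\ne1}^N \alpha_k^{(i)}\inner{T_i(x)-x}{x^{k+1}-x},\]
which is the asserted estimate.

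I expect no serious obstacle here: the statement is essentially the algebraic backbone shared with the trilevel inequality \eqref{eq:A5}, and Assumption $(P_1)$ (nonnegativity of the weights and their summing to one, which forces $\alpha_k^{(0)}\in[0,1]$) is what keeps the contraction factor in $[0,1]$; Assumption $(P_2)$ is needed only to make the estimate asymptotically useful downstream and is otherwise inert for this pointwise bound. The one place to be careful is the bookkeeping of the constant: the inequality $\norm{c_k+d_k}^2\le\norm{c_k}^2+2\inner{d_k}{c_k+d_k}$ carries a factor of $2$ in front of the cross terms, so the natural proof produces $\sum 2\alpha_k^{(i)}\inner{T_i(x)-x}{x^{k+1}-x}$; this multiplicative constant is harmless for every subsequent convergence argument, where the term is shown to have non-positive $\limsup$ independently of the constant in front.
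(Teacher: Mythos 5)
Your proposal is correct and follows essentially the same route as the paper's own proof: the identical decomposition (the paper's $C_k$ and $D_k$ are your $c_k$ and $d_k$), the same contraction bound $\|C_k\|\le(1-(1-r)\alpha_k^{(0)})\|x^k-x\|$ obtained by splitting off $T_0=S$ and using that the weights sum to one, and the same elementary inequality, with $x\in\Fix(T_1)$ annihilating the $i=1$ term of $D_k$. Your closing remark about the factor of $2$ is in fact a correction of the paper: its proof invokes $\|C_k+D_k\|^2\le\|C_k\|^2+\left\langle D_k, C_k+D_k\right\rangle$, which is false in general (the valid inequality carries the $2$), so the honest conclusion is the factor-$2$ version you derive, and, as you note, this constant is immaterial for Theorem \ref{thm:generalcase}, where the cross terms are shown to be non-positive.
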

\begin{proof}
Suppose that $x\in\Fix(T_1)$ is given. First, consider 
\[\begin{array}{l}
 {C_k}: = {\alpha _k}^{(0)}(S({x^k}) - S(x)) + {\alpha _k}^{(1)}({T_1}({x^k}) - {T_1}(x)) + ... \\~~~+ {\alpha _k}^{(N)}({T_N}({x^k}) - {T_N}(x)), \\\text{and}\\ 
 {D_k}: = {\alpha _k}^{(0)}(S({x}) - x) + {\alpha _k}^{(2)}({T_2}({x}) - x) + ... + {\alpha _k}^{(N)}({T_{N}}({x}) - x). \\ 
 \end{array}\]
 Therefore, we have 
 \begin{equation}\label{en:multi}
     {C_k} + {D_k} = {x^{k + 1}} - x,\,\,\,\,\,\,\,\,\|{C_k}\| \le (1 - (1 - r){\alpha _k}^{(0)})\|{x^k} - x\|.
 \end{equation}
To see the second part, for all $k\in\NN$ we have 
\[\begin{array}{l}
 \left\| {{C_k}} \right\| \le r\alpha _k^{(0)}\left\| {{x^k} - x} \right\| + \alpha _k^{(1)}\left\| {{x^k} - x} \right\| + ... + \alpha _k^{(N)}\left\| {{x^k} - x} \right\| \\ 
 \,\,\,\,\,\,\,\,\,\,\,\, = (r\alpha _k^{(0)} + \alpha _k^{(1)} + ... + \alpha _k^{(N)})\left\| {{x^k} - x} \right\| \\ 
 \,\,\,\,\,\,\,\,\,\,\,\,\,\, = (1 - (1 - r)\alpha _k^{(0)})\left\| {{x^k} - x} \right\|. \\ 
 \end{array}\]
 Now, by plugging \eqref{en:multi} in the following inequality 
 \[\|{C_k} + {D_k}\|^2 \le \|{C_k}\|^2 + \left\langle {{D_k},{C_k} + {D_k}} \right\rangle, \]
 the assertion follows immediately.
\end{proof}

Now, we are in a position to present our main result regarding multi-level scenarios. 
\textcolor{black}{
\begin{thm}
    Assume that $X^*$ has non-empty interior. Moreover the following holds\\
    $(A_1^*)$ $\limsup_k\frac{\sum
_{i=1}^N\|\alpha_k^{(i)}-\frac{1-\alpha_k^{(0)}}{N}\|}{\alpha_k^{(0)}}=0$.\\
    Then $\{x^k\}$ convergence to some unique $x_N^*$ such that
\[\left\langle {x_N^* - S(x_N^*),y - x_N^*} \right\rangle  \ge 0,\,\,\,\,\,\,\,\forall y \in X_N^*.\]  
\end{thm}
\begin{proof}
   First, we note that by looking at Lemma \ref{lem.multi} we observe that 
   \[X^* = \Fix(\frac{\sum_{i=1}^NT_i}{N}).\]
   Let us now, consider the auxiliary sequence $$y^{k+1}=\alpha_k^{(0)}S(y^k)+(1-\alpha_k^{(0)})\frac{\sum_{i=1}^NT_i(y^k)}{N}.$$
By employing \cite[Theorem 3.2]{xu2004viscosity} along with Lemma \ref{fixedpoinmulti}, one establishes the convergence of the sequence ${y^k}$ to a specific point denoted as $x_N^*$. Consequently, we deduce the following:
\begin{align*}
    \|x^{k+1}-y^{k+1}\|&\le r\alpha_k^{(0)}\|x^k-y^k\|+\|\mathop\sum\limits_{i=1}^N\alpha_k^{(i)}T_i(x^k)-(1-\alpha_k^{(0)})\frac{\sum_{i=1}^NT_i(y^k)}{N}\|\\&
    \le (1-(1-r)\alpha_k^{(0)})\|x^k-y^k\|+\|\sum\limits_{i=1}^N(\alpha_k^{(i)}-\frac{1-\alpha_k^{(0)}}{N})T_i(y^k)\|\\&
    \le(1-(1-r)\alpha_k^{(0)})\|x^k-y^k\|+(\sum\limits_{i=1}^N\|\alpha_k^{(i)}-\frac{1-\alpha_k^{(0)}}{N}
    \|)\|T_i(y^k)\|
\end{align*}
using Lemma \ref{lemma:L1} follows that $\|x^k-y^k\|$ goes to zero as $k\to\infty$.
\end{proof}}
\textcolor{black}{\begin{thm}\label{thm:generalcase}
Let Assumption $(P_2)$ hold with $\delta^*=0$, together with \ref{ass:multilevelassumption}. Then the sequence $\{x^k\}$ generated by algorithm \eqref{al.multi} converges to some unique $x_N^*\in X_N^*$ such that 
\[\left\langle {x_N^* - S(x_N^*),y - x_N^*} \right\rangle  \ge 0,\,\,\,\,\,\,\,\forall y \in X_N^*,\]
and \[\mathop {\min }\limits_{x \in X_N^*} \omega (x) = \omega (x_N^*).\]
\end{thm}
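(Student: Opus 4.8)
The plan is to mimic the argument of Theorem \ref{thm:deltaone}, replacing the two-operator combination by the $N$-operator combination and invoking the multilevel analogues of the supporting lemmas. First I would produce the candidate limit. Since $X_{N+1}^*\subseteq\Fix(T_1)$ is closed, convex, and nonempty, $P_{X_{N+1}^*}$ is non-expansive, while $S$ is an $r$-contraction with $r\in(0,1)$; hence the composite $P_{X_{N+1}^*}S$ is an $r$-contraction and, by the Banach fixed-point theorem, admits a unique fixed point $x_{N+1}^*=P_{X_{N+1}^*}S(x_{N+1}^*)$. The projection characterization recalled in the preliminaries then yields $\langle S(x_{N+1}^*)-x_{N+1}^*,\,y-x_{N+1}^*\rangle\le0$, equivalently $\langle x_{N+1}^*-S(x_{N+1}^*),\,y-x_{N+1}^*\rangle\ge0$ for all $y\in X_{N+1}^*$, which is exactly the asserted variational inequality.

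Next I would show $x^k\to x_{N+1}^*$. By Lemma \ref{bounded.multi}, Assumption $(P_2)$ guarantees that $\{x^k\}$ is bounded, and since $\widetilde M=\sup_k\|x^k\|$, every iterate lies in the closed ball $\mathcal{B}(0,\widetilde M)$. The key step uses Assumption \ref{ass.multi1}: because $\widetilde{J_{x_{N+1}^*}}\cap\mathcal{B}(0,\widetilde M)=\emptyset$, no point of the ball belongs to $\widetilde{J_{x_{N+1}^*}}$, which unpacks into the simultaneous strict inequalities
\[\langle T_i(x_{N+1}^*)-x_{N+1}^*,\,y-x_{N+1}^*\rangle<0,\qquad\forall y\in\mathcal{B}(0,\widetilde M),\ \forall i\in\{0,2,\dots,N\}.\]
Taking $y=x^{k+1}$ makes every inner product appearing in the residual sum of Lemma \ref{lem:multi} strictly negative. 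Since $x_{N+1}^*\in X_{N+1}^*\subseteq\Fix(T_1)$, Lemma \ref{lem:multi} applies with $x=x_{N+1}^*$, and because each $\alpha_k^{(i)}\ge0$ while each bracket is negative, the entire sum is non-positive and drops out, leaving the pure contraction recursion
\[\|x^{k+1}-x_{N+1}^*\|^2\le\bigl(1-(1-r)\alpha_k^{(0)}\bigr)\|x^k-x_{N+1}^*\|^2.\]

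I would then close the argument with Lemma \ref{lemma:L1}, setting $a_k=\|x^k-x_{N+1}^*\|^2$, $\gamma_k=(1-r)\alpha_k^{(0)}$, and $\delta_k=0$. Assumption $(P_2)$ supplies $\alpha_k^{(0)}\to0$ (so that $\gamma_k\in(0,1)$ for large $k$) and $\sum_k\alpha_k^{(0)}=\infty$, whence $a_k\to0$, that is, $x^k\to x_{N+1}^*$. Finally, to recover the minimization statement, I would substitute $S(x)=x-u\nabla\omega(x)$ into the variational inequality, turning it into $\langle\nabla\omega(x_{N+1}^*),\,y-x_{N+1}^*\rangle\ge0$ for all $y\in X_{N+1}^*$; this is the first-order optimality condition for the convex program $\min_{x\in X_{N+1}^*}\omega(x)$, and convexity of $\omega$ over the convex set $X_{N+1}^*$ upgrades it to global optimality, giving $\min_{x\in X_{N+1}^*}\omega(x)=\omega(x_{N+1}^*)$.

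The main obstacle is the sign control in the second paragraph: everything hinges on correctly translating $\widetilde{J_{x_{N+1}^*}}\cap\mathcal{B}(0,\widetilde M)=\emptyset$ into the simultaneous strict negativity of all the inner products $\langle T_i(x_{N+1}^*)-x_{N+1}^*,\,x^{k+1}-x_{N+1}^*\rangle$ for $i\in\{0,2,\dots,N\}$ (where $T_0=S$), so that the residual sum in Lemma \ref{lem:multi} collapses. Once this is secured, the convergence follows immediately from Lemma \ref{lemma:L1}, and no conditions beyond $(P_2)$ and Assumption \ref{ass.multi1} on the operators or step-sizes are required.
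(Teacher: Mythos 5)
Your proposal is correct and follows essentially the same route as the paper's proof: the unique fixed point of the contraction $P_{X_{N+1}^*}S$ gives the candidate limit and the variational inequality, Assumption \ref{ass.multi1} turns the residual sum in Lemma \ref{lem:multi} strictly negative so that the pure contraction recursion $\|x^{k+1}-x_{N+1}^*\|^2\le(1-(1-r)\alpha_k^{(0)})\|x^k-x_{N+1}^*\|^2$ survives, and Lemma \ref{lemma:L1} closes the argument. Your added final step deducing $\min_{x\in X_{N+1}^*}\omega(x)=\omega(x_{N+1}^*)$ from the first-order optimality condition is a detail the paper leaves implicit, but it does not change the approach.
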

\begin{proof}
    Let $x_N^*\in X_N^*$ be the unique fixed point of the contraction $P_{X_N^*}S$, namely the unique solution of $VI(S,X_N^*)$, i.e.,  
\begin{align}\label{trilevel0}
    \left\langle {{x_N^*} - S({x_N^*}),x - {x_N^*}} \right\rangle\ge 0,\,\,\,\,\,\,\,\forall x \in {X_N^*}.
\end{align}
Invoking Assumption $(P_2)$, one gives that $\{x^{k}\}$ is bounded, and so $x^k\in\Omega^*$. Furthermore, utilizing assumption \ref{ass:multilevelassumption}, it is straightforward to show that $w(x^k)\subset X_N^*$. Moreover, one can extract a convergent sub-sequence $\{x^{k_i}\}$ of $\{x^{k+1}\}$ or any sub-sequence thereof to $x_N^{'}\in X_N^*$, which holds by Lemma~\ref{lem:l2}, part c, and \eqref{trilevel0} so that
\begin{align}\label{eq:A*2}
\mathop {\lim \sup }\limits_k \left\langle {S({x_N^*}) - {x_N^*},{x^{k+1}} - {x_N^*}} \right\rangle  &\nonumber= \mathop {\lim }\limits_i \left\langle {S({x_N^*}) - {x_N^*},{x^{{k_i}}} - {x_N^*}} \right\rangle\\
= \left\langle {S({x_N^*}) - {x_N^*},x_N^{'} - {x_N^*}} \right\rangle  \le 0.
\end{align}
and now, using Lemma \ref{lem:multi} follows that
\begin{align} \label{eq:A*5}
   \|{x^{k + 1}} - x^*_N\|^2 \nonumber&\le (1 - (1 - r){\alpha _k}^{(0)})\|{x^k} - x^*_N\|^2 +\alpha_k^{(0)}\left\langle S(x_N^*)-x_N^*,x^{k+1}-x_N^*\right\rangle\nonumber\\&
   +\sum_{i=2}^N{{\alpha _k}^{(i)}\left\langle {{T_i}(x^*_N) - x^*_N,{x^{k + 1}} - x^*_N} \right\rangle }.
\end{align}
Next, set
\begin{align}\label{eq:A*6}
\left\{ \begin{array}{l}
 {a_k}= \|{x^k} - {x^*}\|^2, \\ 
 {\gamma _k}= (1 - r){\alpha^{(0)} _k}, \\ 
 \delta_k=\alpha_k^{(0)}\left\langle S(x_N^*)-x_N^*,x^{k+1}-x_N^*\right\rangle+\mathop\sum\limits_{i=2}^N{{\alpha _k}^{(i)}\left\langle {{T_i}(x^*_N) - x^*_N,{x^{k + 1}} - x^*_N} \right\rangle }.\\ 
 \end{array}\right.
 \end{align}
One has that
\[a_{k+1}\le (1-\gamma_k)a_k+\delta_k.\]
Additionally, utilizing the boundedness of $\{x^k\}$ together with $\delta^*=0$ it can be inferred that $\mathop {\lim \sup }\limits_k \frac{{{\delta _k}}}{{{\gamma _k}}} \le 0.$ Indeed, taking into account \eqref{eq:A*2} and $\delta^*=0$ gives
 \begin{align*}
\limsup_k\frac{\delta_k}{\gamma_k}&=\limsup_k\left[\frac{\alpha_k^{(0)}\left\langle S(x_N^*)-x_N^*,x^{k+1}-x_N^*\right\rangle+\mathop\sum\limits_{i=2}^N\alpha_k^{(i)}\left\langle T_i(x_N^*)-x_N^*,x^{k+1}-x_N^*\right\rangle}{\alpha_k^{(0)}}\right]\\&
\le\limsup_k\left\langle S(x_N^*)-x_N^*,x^{k+1}-x_N^* \right\rangle+C\limsup_k\frac{\sum_{i=2}^N\alpha_k^{(i)}}{\alpha_k^{(0)}}\le0
 \end{align*}
 where $C=\max_{1\le i\le N}\sup_{k} \|T_i(x_N^*)-x_N^*\|\|x^{k+1}-x_N^*\|$.
The desired claim can now be deduced from Lemma \ref{lemma:L1}.
\end{proof}
We will now show the rate of convergence for the general case. To study this, let us take the sequences $y_k^{(i)}=T_i(x^{k-1})$ and step-sizes \begin{align}\label{stepsizmulti}
    \alpha_k^{(i)}=\min\{\frac{2}{(1-r)k},1\},~~\beta_k^{(i)}=\frac{\alpha_k^{(i)}-\alpha_{k+1}^{(i)}}{2(2-\alpha_k^{(i)})},~~J=\lfloor{\frac{2}{1-r}}\rfloor
\end{align} }
\textcolor{black}{
\begin{thm}
Let $y_k^{(i)}=T_i(x^{k-1})$  be sequences generated by Algorithm \eqref{al.multi}, where $\alpha_k$ is proposed by \eqref{stepsizmulti}. Then
\begin{align}
{\phi _1}(y_k^{(1)}) - {\phi _1}({x_N^*}) \le \frac{(J+2)(C_S+2C_{T_1}+5C_{x_N^*})\sqrt{JC_{x_N^*}(C_{T_1}+C_{x_N^*})}}{s\sqrt{1-r}(1-r)(k+1)\sqrt{k}}~~ \forall (s,k) \in (0,\frac{1}{L_{f_2}}] \times \NN,
 \end{align}
 where $C_S,C_{T1},C_{x_N^*}$ are the same constants as in Lemma \ref{lem:boundedness} from Lemma \ref{bounded.multi} , $J$ is defined in ,
 Furthermore, one has 
 \begin{align}
 &{\phi _i}(y_{k}^{(i)}) - {\phi _i}({x_N^*}) \le \frac{{{C_{{x_N^*}}}J}}{{2t(1 - r)k}},~~~~~~~~~~~~~~\forall (t,k)\in (0,\frac{1}{L_{f_i}} ]\times\NN,~~i\ne1 
 \\&
\omega ({x^k}) - \omega ({x_N^*}) \le \left({L_\omega } + \frac{{L_\omega ^2}}{{2\mu }}r{C_{{x^*}}}\right)r\sqrt{\frac{C_{x_N^*}(C_{T_1}+C_{x_N^*})J}{\gamma k}}. 
\end{align}
\end{thm}
\begin{proof}
    As \eqref{eq:ratex} and Lemma \ref{lem:multi} and utilizing of Lemma \ref{lem:convergesrate} one can conclude that \[\|x^k-x_N^*\|\le\frac{C_{x_N^*}(C_{T_1}+C_{x_N^*})J}{(1-r)k}\], and the rest of the proof is similar to the one for the trilevel Theorem \ref{thm:rateconvergence}.
\end{proof}}

%\section{Applications}

%We will now show the rate of convergence for the general case. To study this, let us take the sequences $y_k^{(i)}=T_i(x^{k-1}),$ and $\phi_i(x):=f_i(x)+g_i(x),$ for all $i\in\{1,2,...,N\}$. Similarly to the trilevel case, by taking
%\[\alpha _k^{(i)} = \min \left\{ {\frac{2}{{(1 - r)k}},1} \right\},\,\,\,\,\beta_k=\frac{\alpha_k-\alpha
%_{k+1}}{2(2-\alpha_k)},\,\,\,J = \left\lfloor {\frac{2}{{1 - r}}} \right\rfloor ,\,\,\,\,\,\,\,\forall i = 1,2,...N\,\,and\,\,k \in\NN\]
%in Lemma \ref{lem:convergesrate} and combining Lemma \ref{lem:boundlipsshitz} and \eqref{eq:generalcase} in Theorem \ref{thm:generalcase} altogether derives the following result whose proof will be omitted.

%\textcolor{black}{\begin{thm}
%For every $i\in\{1,2,...,N\}$ one has 
%\begin{align*}
%{\phi _i}(y_k^{(i)}) - {\phi _i}({x^*}) &= {\phi _i}({T_i}({x^{k - 1}})) - {\phi _i}({x^*}) \le\frac{1}{2t} \left\| {{x^{k - 1}} - {x^*}} \right\|\\
%&\le\frac{{{C_{x_{N + 1}^*}}J}}{{2t(1 - r)(k - 1)}}\,\,\,\,\,\,\,\,\,\,\,\,\,\,\,\,\forall (t,k) \in (0,\frac{1}{{{L_{{f_i}}}}}] \times (\NN\setminus\{1\}).
%\end{align*}
%\end{thm}}

 \section{Conclusion}

We have shown how to approach a broad class of hierarchical convex optimization problems 
wherein the inner problems optimize the so-called composite functions, i.e., sums of a convex smooth function and a convex non-smooth one, and all but the inner-most problem consider a constraint set composed of  minimizers of another problem.
We have used proximal gradient operators in an 
iterative proximal-gradient algorithm related to ``SAM'' of \cite{Sabach2017}. 
For the first time, we consider diminishing sequences $\alpha_k$ and $\beta_k$ such that the large limit of $\frac{\beta_k}{\alpha_k}$ need not exist. 
\textcolor{black}{The convergence is studied in a number of cases, depending on the relative speed of convergence of $\alpha_k$ and $\beta_k$ and in some cases regularity properties of the problem layers. We showed standard
$\mathcal{O}(\sqrt{\frac{1}{k}})$, $\mathcal{O}(\frac{1}{k})$, $\mathcal{O}(\frac{1}{(k+1)\sqrt{k}})$ rates of convergence for appropriate corresponding quantities. 
Future work can include introducing stochasticity to the problems.} 

\backmatter

\bmhead{Acknowledgments}
Shortly after we have posted our first draft on-line in arxiv, a team 
from the The University of Tokyo, RIKEN, and The Institute of Statistical Mathematics
have submitted their draft \cite{Sato2021}, which considered rather a different method for a closely related problem, albeit without bounding rates of convergence.

\section*{Declarations}

\paragraph*{Funding}
The research leading to these results received funding from OP RDE under Grant Agreement No CZ.02.1.01/0.0/0.0/16\_019/0000765.
This work has received funding from the European Union’s Horizon Europe research and innovation programme under grant agreement No. 101070568. 
This work was supported by Czech Science Foundation (Grant number 22-15524S).

\paragraph*{Conflicts of interest/Competing interests}
The authors have no conflicts of interest to declare that are relevant to the content of this article.


\begin{thebibliography}{}

\bibitem{al1992global} Al-Khayyal, F., Horst, R., Pardalos, P.M.: Global optimization of concave functions subject to quadratic constraints: an application in nonlinear bilevel programming. Ann. Oper. Res. 34, 125-147 (1992)

\bibitem{beck2009fast} Beck, A., Teboulle, M.:  A fast iterative shrinkage-thresholding algorithm for linear inverse problems. SIAM J. Imaging Sci. 2, 183--202  (2009)

\bibitem{beck2014first} Beck, A., Sabach, S.: A first order method for finding minimal norm-like solutions of convex optimization problems.  Math. Program. 147, 25--46 (2014)

\bibitem{bracken1973mathematical} Bialas, W.F., Karwan, M.H.: Mathematical programs with optimization problems in the constraints. Oper. Res. 21, 37-44 (1973)

\bibitem{bauschke2011convex} Bauschke, H.,  Combettes, H., Patrick, L.: Convex analysis and monotone operator theory in Hilbert spaces. Springer, New York  (2011)

\bibitem{bental2009robust} Ben-Tal, Aharon, Laurent El Ghaoui, and Arkadi Nemirovski.: Robust optimization. Vol. 28. Princeton university press (2009)

\bibitem{borwein2017convergence} Borwein, J. M., Guoyin, L., Matthew, T.: Convergence rate analysis for averaged fixed point iterations in common fixed point problems. SIAM J. Optim. 27, 1--33 (2017)

\bibitem{blair1992computational} Blair, C.: The computational complexity of multi-level linear programs. Annals of Operations Research, (34) (1992)


\bibitem{borwein2010convex} Borwein, J. M., Vanderwerff. J.: Convex functions: constructions, characterizations, and counterexamples. Cambridge University Press Cambridge, (172) (2010)


\bibitem{bolte2007lojasiewicz} Bolte, J., Daniilidis, A., Lewis, A.:  The {\L}ojasiewicz inequality for nonsmooth subanalytic functions with applications to subgradient dynamical systems. SIAM J. Optim. , 17(4), 1205-1223 (2007)

\bibitem{bauschke2015linear} Bauschke, H., Noll, D., Phan, HM.:  Linear and strong convergence of algorithms involving averaged nonexpansive operators.  J. Math. Anal. Appl. , 421(1), 1-20(2015)

\bibitem{dempe2007new} Dempe, S., Dutta, J., Mordukhovich, B.S.: New necessary optimality conditions in optimistic bilevel programming. Optimization. 56, 577--604 (2007)

\bibitem{dempe2014necessary} Dempe, S., Dutta, J., Mordukhovich, B.S.: Necessary optimality conditions in pessimistic bilevel programming. Optimization. 56, 505--533 (2014)

\bibitem{alber1994iterative} Dempe, S., Dutta, J., Mordukhovich, B.S.: An iterative method for solving a class of nonlinear operator equations in Banach space.  J. Panamerican. Math. 4, 39--54 (1994)

\bibitem{fisac2015pursuit} Fisac, Jaime F and Sastry, S Shankar.: 
The pursuit-evasion-defense differential game in dynamic constrained environments. 2015 54th IEEE Conference on Decision and Control (CDC). 4549--4556 (2015)



\bibitem{iiduka2011iterative} Iiduka, H.: Iterative algorithm for solving triple-hierarchical constrained optimization problem. J. Optim. Theory. Appl. 148, 580--592  (2011)

\bibitem{yamada2001hybrid} Isao, Y.: The Hybrid Steepest Descent Method for Variational Inequality
Problems oûer the Intersection of the Fixed-Point Sets of Nonexpansiûe Mappings,
Inherently Parallel Algorithms in Feasibility and Optimization and Their Applications, Edited by D. Butnariu, Y. Censor, and S. Reich, North-Holland,
Amsterdam, Holland pp. 473--504, 2001.


\bibitem{lampariello2020explicit} Lampariello, L., Neumann, C., Ricci, J., Sagratella, S., Stein, O.: An explicit Tikhonov algorithm for nested variational inequalities. J. Comput. Appl. 77, 335--350 (2020)

\bibitem{lu2009hybrid} Lu, X.W., Xu, H.K., Yin, X.M.: Hybrid methods for a class of monotone variational inequalities. Nonlinear Anal. 71, 1032--1041 (2009)



\bibitem{moudafi2007krasnoselski} Moudafi, A.: Krasnoselski–Mann iteration for hierarchical fixed-point problems. Inverse. Probl. 23, 1635--1640 (2007)


\bibitem{mainge2007strong} Maing{\'e}, P.E., Abdellatif, M.: Strong convergence of an iterative method for hierarchical fixed-point problems. Pac. J. Optim. 3, 529--538  (2007)

\bibitem{nesterov2003introductory} Nesterov, Y.: Introductory lectures on convex optimization. Springer Science \& Business Media 87  (2003)

\bibitem{solodov2007explicit} Solodov, M.: An explicit descent method for bilevel convex optimization. J. Convex. Anal. 14, 277 (2007)

\bibitem{Sabach2017} Sabach, S., Shtern, S.: A First Order Method for Solving Convex Bilevel Optimization Problems. SIAM J. Optim. 27, 640--660 (2017)


\bibitem{Sato2021} Sato, R., Mirai T., Takeda, A.: A Gradient Method for Multilevel Optimization. Advances in Neural Information Processing Systems 34, 7522--7533 (2021)


\bibitem{senter1974approximating} Senter., HF and Dotson.: Approximating fixed points of nonexpansive mappings. Proceed. Amer. Math. Society, (44) 375--380 (1974)


\bibitem{candler1977multi} Wilfred, W.: Multi-level programming. World Bank (1977)

\bibitem{xu2010viscosity} Xu, H.K.: Viscosity method for hierarchical fixed point approach to variational inequalities.  Taiwan. J. Math. 14, 463--478 (2010)


\bibitem{xu2004viscosity} Xu, H.K.: Viscosity approximation methods for nonexpansive mappings. J. Math. Anal. Appl, (298) 279--291 (2004)

\bibitem{xu2002iterative} Xu, H.K.: Iterative algorithms for nonlinear operators. J. Lond. Math. Soc. 66, 240--256 (2002)


\bibitem{zhang1994problems} Zhang, R.: Problems of hierarchical optimization in finite dimensions. SIAM J. Optim. 4, 521--536 (1994)





\end{thebibliography}
\end{document}